  \newcommand\address[1]{}
  \newcommand\email[1]{}
  \newcommand\dedicatory[1]{}
  \newcommand{\balpha}{\overline{\alpha}}
  \newcommand{\bbeta}{\overline{\beta}}
  \newcommand{\calB}{{\mathcal B}}
  \newcommand{\calE}{{\mathcal E}}
  \newcommand{\calM}{{\mathcal M}}
  \newcommand{\calX}{{\mathcal X}}
  \newcommand{\s}{\ensuremath{S_g}\xspace}                       
  \renewcommand{\sp}{\ensuremath{S_{g,p}}\xspace}                
  \newcommand{\Teich}{Teichm\"uller\xspace}                      
  \newcommand{\M}{\ensuremath{\calM_g}\xspace}                   
  \newcommand{\PM}{\ensuremath{\calM_{g,p}}\xspace}              
  \newcommand{\Mp}{\ensuremath{\PM / \Sym_p}\xspace}             
  \newcommand{\Mthick}{\ensuremath{\M^{\,\ep}}\xspace}           
  \newcommand{\PMthick}{\ensuremath{\calM_{g,p}^{\,\ep}}\xspace} 
  \newcommand{\Mpthick}{\ensuremath{\PMthick/ \Sym_p}\xspace}    
  \newcommand{\Bers}{\ensuremath{{\calB}_g}\xspace}              
  \newcommand{\PBers}{\ensuremath{{\calB}_{g,p}}\xspace}         
  \newcommand{\Bersp}{\ensuremath{\PBers / \Sym_p}\xspace}         
  \newcommand{\dT}{\ensuremath{d_T\xspace}}                      
  \newcommand{\dL}{\ensuremath{d_L\xspace}}                      
  \newcommand{\diamT}{\ensuremath{\diam_T}\xspace}               
  \newcommand{\Tree}{\ensuremath{{\rm Tree}(n)}\xspace}          
  \newcommand{\Graph}{\ensuremath{{\rm Graph}(g)}\xspace}        
  \newcommand{\PGraph}{\ensuremath{{\rm Graph}(g,p)}\xspace}     
  \newcommand{\Graphp}{\ensuremath{\PGraph / \Sym_p}\xspace}      
  \newcommand{\X}{\ensuremath{\mathcal{X}_n}\xspace}             
  \newcommand{\Xthick}{\ensuremath{\X^\epsilon}\xspace}          
  \newcommand{\F}{\ensuremath{\mathbb{F}_n}\xspace}              
  \DeclareMathOperator{\diam}{diam}                              
  \DeclareMathOperator{\ch}{ch}                                  
  \DeclareMathOperator{\size}{size}                              
  \DeclareMathOperator{\HD}{HD}                                  
  \DeclareMathOperator{\Sym}{Sym}                                
  \DeclareMathOperator{\ext}{Ext}                                
  \DeclareMathOperator{\area}{Area}                              
  \newcommand{\ep}{\ensuremath{\epsilon}\xspace}
  \newcommand{\param}{{\mathchoice{\mkern1mu\mbox{\raise2.2pt\hbox{$
  \centerdot$}}
  \mkern1mu}{\mkern1mu\mbox{\raise2.2pt\hbox{$\centerdot$}}\mkern1mu}{
  \mkern1.5mu\centerdot\mkern1.5mu}{\mkern1.5mu\centerdot\mkern1.5mu}}}
  \theoremstyle{plain}
    \newtheorem{theorem}{Theorem}[section]
    \newtheorem{proposition}[theorem]{Proposition}
    \newtheorem{corollary}[theorem]{Corollary}
    \newtheorem{lemma}[theorem]{Lemma}
    \newtheorem{claim}[theorem]{Claim}
  \theoremstyle{definition}
    \newtheorem{definition}[theorem]{Definition}
  \newtheorem{introthm}{Theorem}
  \newtheorem{introcor}[introthm]{Corollary}
  \newcommand{\secref}[1]{\S~\ref{Sec:#1}}
  \newcommand{\thmref}[1]{Theorem~\ref{Thm:#1}}
  \newcommand{\propref}[1]{Proposition~\ref{Prop:#1}}
  \newcommand{\lemref}[1]{Lemma~\ref{Lem:#1}}
  \newcommand{\corref}[1]{Corollary~\ref{Cor:#1}}
  \newcommand{\figref}[1]{Figure~\ref{Fig:#1}}
  \newcommand{\eqnref}[1]{Equation~\ref{Eq:#1}}
  \newcommand{\defref}[1]{Definition~\ref{Def:#1}}
\begin{document}


  \title      {The Diameter of the Thick Part of Moduli Space and Simultaneous
  Whitehead Moves}
  \author     {Kasra Rafi}
  \address    {Department of Mathematics\\
               University of Oklahoma\\
               Norman, OK 73019-0315, USA}
  \email      {rafi@math.ok.edu}
  \author     {Jing Tao}
  \address    {Department of Mathematics\\
               University of Utah\\
               Salt Lake City, UT 84112-0090, USA}
  \email      {jing@math.utah.edu}
  \date       {}
  \author    {Kasra Rafi\footnote{\small Partially supported by NSF
  Research Grant, DMS-1007811.} $\,$ and Jing Tao}

  \maketitle
  
  \begin{abstract}

  Let $S$ be a surface of genus $g$ with $p$ punctures with negative Euler
  characteristic. We study the diameter of the $\ep$--thick part of moduli
  space of $S$ equipped with the \Teich or Thurston's Lipschitz metric. We
  show that the asymptotic behaviors in both metrics are of order $\log
  \left( \frac{g+p}{\ep} \right)$. The same result also holds for the
  $\ep$--thick part of the moduli space of metric graphs of rank $n$
  equipped with the Lipschitz metric. The proof involves a sorting
  algorithm that sorts an arbitrary labeled tree with $n$ labels using
  simultaneous Whitehead moves, where the number of steps is of order
  $\log(n)$. As a related combinatorial problem, we also compute, in the
  appendix of this paper, the asymptotic diameter of the moduli space of
  pants decompositions on $S$ in the metric of elementary moves.
  
  \end{abstract}



\section{Introduction}
 
  \label{Sec:Introduction}
  
  Let \PM be the moduli space of complete finite-volume hyperbolic surfaces
  of genus $g$ with $p$ \emph{labeled} punctures. We equip \PM with the
  \Teich metric \dT. Let $\ep_M$ be the Margulis constant (two curves of
  length less than $\ep_M$ on a hyperbolic surface do not intersect) and
  let $\ep \leq \ep_M$. Let  \PMthick be the $\ep$--thick part of $\PM$,
  that is, the space of surfaces where the length of every essential closed
  curve is at least $\ep$. By a theorem of Mumford, \PMthick is compact. We
  are interested in a better understanding of the ``shape'' of \PMthick. As
  a first step, we study the asymptotic behavior of the \Teich diameter of
  \PMthick as $g$ and $p$ go to infinity. In this paper, we prove:
  
  \begin{introthm} \label{Thm:IntroDiamModuli}
     
     There exists $K$ such that, for every $g$ and $p$ with $2g-2+p>0$ and
     every $\ep \le \ep_M$,
     \begin{equation} \label{Eq:OrderLog} 
        \frac{1}{K} \log \left(\frac{g+p}{\ep}\right) \le \diamT
        \big(\PMthick\big) \le K \log \left(\frac{g+p}{\ep}\right).
     \end{equation}
  
  \end{introthm}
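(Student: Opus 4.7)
I would handle the two inequalities in (\ref{Eq:OrderLog}) separately.

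\emph{Lower bound.} The main tool is Kerckhoff's comparison
\[
\dT(X,Y) \;\geq\; \tfrac{1}{2}\,\bigl|\log\bigl(\ext_X(\alpha)/\ext_Y(\alpha)\bigr)\bigr|,
\]
valid for any simple closed curve $\alpha$. It therefore suffices to exhibit two surfaces $X, Y \in \PMthick$ together with a curve $\alpha$ whose extremal lengths differ by a factor of order $(g+p)/\ep$. The natural construction takes $X$ with a chain of roughly $g+p$ pants curves, each pinched to length $\ep$, and $\alpha$ a simple closed curve meeting every collar; a standard annulus-crossing estimate forces $\ext_X(\alpha) \gtrsim (g+p)/\ep$, while $Y$ is chosen so that $\alpha$ lives in a bounded-complexity subsurface and $\ext_Y(\alpha) \asymp 1$. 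Since $\sys$ is $\text{MAP}$-invariant, the estimate descends from Teichm\"uller space to moduli space.

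\emph{Upper bound.} I would proceed in three steps. \textbf{(1) Standardization.} Define a finite family of standard surfaces $\{S_\sigma\} \subset \PMthick$ indexed by combinatorial data $\sigma$ --- a pants decomposition together with a labeled dual graph --- all with pants curves of length $1$ and twist parameters in $[0,1]$. Any $X \in \PMthick$ lies within \Teich distance $O(\log(1/\ep))$ of some $S_\sigma$: changing each pants curve length from its current $\ell \geq \ep$ to $1$, done in parallel on all pants, costs $O(\log(1/\ep))$ by the standard Fenchel--Nielsen distance estimates. \textbf{(2) Sorting.} Invoke the combinatorial result promised by the abstract: any two labelings of a graph with $n = g+p$ labels are connected by $O(\log n)$ simultaneous Whitehead moves. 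The argument should follow a divide-and-conquer pattern in which each round performs a disjoint family of elementary moves that reduces a natural disorder measure on the labeling by a constant factor. \textbf{(3) Geometric realization.} Because the components of a simultaneous Whitehead move occur in pairwise disjoint subsurfaces, they can be implemented by a single \Teich deformation whose length depends only on the complexity of one local move, not on $n$. Concatenating $O(\log(g+p))$ such bounded deformations connects any two standards in total length $O(\log(g+p))$; together with Step (1), this yields the upper bound.

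\emph{Main obstacle.} The crux is the sorting algorithm in Step (2): a greedy one-move-at-a-time strategy yields only the linear bound of order $g+p$, so the logarithmic improvement comes precisely from executing many disjoint Whitehead moves \emph{in parallel} at each round --- a nontrivial scheduling problem reminiscent of parallel sorting networks. A secondary geometric difficulty in Step (3) is to verify that the cost of a simultaneous move does not accumulate across its components; it is the disjointness of supports together with the uniform thickness hypothesis that prevents the parallel deformations from interfering, so that each round contributes only $O(1)$ to the total \Teich length.
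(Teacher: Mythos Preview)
Your lower bound has a genuine gap. The curve $\alpha$ you build on $X$ has no canonical image on $Y$ once you pass to moduli space: to bound $\dT([X],[Y])$ from below via Kerckhoff you would need $\ext_Y\big(f(\alpha)\big)$ to be far from $\ext_X(\alpha)$ for \emph{every} homeomorphism $f\colon X\to Y$, i.e.\ every curve on $Y$ in the topological type of $\alpha$ must have extremal length far from $(g+p)/\ep$. This fails, since curves of a fixed type on any surface realize arbitrarily large extremal lengths, so some $f$ makes the ratio close to~$1$. The remark that the systole is MAP-invariant does not rescue this, as your argument concerns the single curve $\alpha$, not the systole. The paper instead works with quantities that are intrinsically mapping-class invariant: the Bers constant $B(\param)$ (a double-hairy-torus construction gives $H\in\PMthick$ with $B(H)\succ\sqrt{g+p}$, forcing $\dL(X,H)\succ\log(g+p)$ for every $X\in\PBers$) and the hyperbolic diameter of the truncated surface (comparing a ``line'' surface of diameter $\asymp g+p$ to a ``bouquet'' surface of diameter $\asymp\log(g+p)$), combined with $\tfrac12\dL\le\dT$.

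Your upper-bound outline is structurally what the paper does --- a ``height'' step (your Step~1) followed by a ``width'' step (your Steps~2--3), with the parallel sorting algorithm as the combinatorial core --- but Step~1 is substantially harder than you indicate. First, the shortest pants decomposition of an arbitrary $X\in\PMthick$ can have curves as long as the Bers constant $B_{g,p}\asymp g+p$, not merely $\ge\ep$, so even a single length change already costs $O\big(\log(g+p)\big)$; your stated bound $O\big(\log(1/\ep)\big)$ is wrong, though the corrected bound $O\big(\log((g+p)/\ep)\big)$ still suffices. Second, and more seriously, there is no ``standard Fenchel--Nielsen estimate'' saying that changing all pants lengths in parallel costs only the maximal individual change: Fenchel--Nielsen coordinates are not an $L^\infty$ chart for $\dT$ with constants independent of $(g,p)$. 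The paper devotes an entire section to this step, bounding $\ext_X(\gamma)/\ext_Y(\gamma)$ for every $\gamma$ directly via Kerckhoff's formula, using intersection numbers with a short marking and a Delaunay-triangulation argument to control $\ext_X(\gamma)$ in terms of $\ell_X(\gamma)^2$ with only polynomial loss in $(g+p)/\ep$.
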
 
  
  We will adopt a shorthand notation and rewrite \eqnref{OrderLog} as
  $\diamT \big(\PMthick\big) \asymp \log \left( \frac{g+p}{\ep} \right)$. 
  
  There are several variations of this theorem. First, there are two
  natural possible metrics on \PMthick. For any two points $X,Y \in
  \PMthick$, we can consider the \Teich distance between $X$ and $Y$ or the
  induced path metric distance in \PMthick between them. Also, the moduli
  space has an alternative definition where the punctures are not marked.
  This is equivalent to considering the quotient space $\PM/\Sym_p$ where
  the symmetric group $\Sym_p$ acts on $\PM$ by permuting the labeling of
  the punctures. Our theorem holds for both spaces and in both senses of
  diameter. 
  
  As we shall see, an essential component of the proof of our theorem is
  that the \Teich metric is an $L^\infty$ metric. Hence, another variation
  is to consider another $L^\infty$ metric on \PM: the asymmetric Lipschitz
  metric $\dL$ as defined by Thurston \cite{thurston:MSM}. There is a
  simple inequality relating the two metrics (see \secref{TwoMetrics}). For
  any $X, Y \in \PM$,
  \begin{equation*} \label{Eq:Metrics} 
     \frac{1}{2}\dL(X,Y) \le \, \dT(X,Y).
  \end{equation*} 
  For the proof of \thmref{IntroDiamModuli}, we will in fact use the
  Lipschitz metric to obtain the lower bound and the \Teich metric to
  obtain the upper bound. In view of the above equation, this will
  simultaneously establish the same asymptotics for the diameter of
  \PMthick in both metrics. Henceforth, when we say distance or
  $d(\param,\param)$, diameter or $\diam(\param)$, without reference to a
  metric, we will mean either one of the two metrics. 

  Another metric to consider would be the Weil-Petersson metric on \Teich
  space. Heuristically, Weil-Petersson metric is an $L^2$--metric: the norm
  of a vector is an $L^2$--average of the amount of deformation throughout
  the surface. For this reason, two points that are distance one apart in
  the \Teich metric have Weil-Petersson distance at most $\sqrt{\area}$.
  Hence, our theorem provides an upper bound of order $\sqrt{g+p} \,
  \log(g+p)$ for the Weil-Petersson diameter of the \Teich space. This
  implies Theorem 1.2 in \cite{cavendish:WPD} where the growth rate of the
  Weil-Petersson diameter of \Teich space is studied. In this case a
  matching lower bound seems to be more difficult and remains open. 
  
  \subsection*{Width and height}

  As a general philosophy, one can study the geometry of a surface by
  decomposing it into pants along the shortest possible curves. If the
  curves are sufficiently short, then the geometry of the surface is
  essentially determined by the combinatorics of the pants decomposition,
  which is encoded by the dual graph of the pants decomposition. When a
  surface does not admit a short pants decomposition, then the lengths of
  the curves in the shortest pants decomposition and the twisting
  information along these curves are also relevant information. For these
  reasons, the proof of \thmref{IntroDiamModuli} naturally breaks down into
  two parts. One part considers the subset $\PBers \subset \PMthick$
  consisting of surfaces that can be decomposed into pants by curves of
  length $\ep_M$. We will refer to the diameter of \PBers as the
  \emph{width} of \PMthick. The other part considers the Hausdorff distance
  between \PMthick and \PBers. We represent this quantity by $\HD
  \big(\PMthick, \PBers)$ and refer to it as the \emph{height} of \PMthick. We
  prove:

  \begin{introthm}[Width of \PMthick] \label{Thm:IntroWidth} 
     
     \[ \diam \big( \PBers \big) \asymp \log(g+p) \]
  
  \end{introthm}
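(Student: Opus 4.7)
The plan is to prove $\diam(\PBers) \asymp \log(g+p)$ by establishing matching upper and lower bounds.

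For the upper bound, I would fix a basepoint $X_0 \in \PBers$ with a preferred pants decomposition $P_0$ and show that every $X \in \PBers$ is connected to $X_0$ by a path of length $O(\log(g+p))$ in $\PBers$. The construction has two stages. The combinatorial stage transforms the dual trivalent graph of $P_X$ into that of $P_0$ via $O(\log(g+p))$ rounds of simultaneous Whitehead moves, invoking the paper's sorting algorithm announced in the abstract. The curves in a single round are pairwise disjoint, and the round is realized by a bounded-length path in $\PBers$: shrink each affected curve toward the Bers--Margulis threshold, flip to the adjacent Bers chart, then re-expand on the other side. The $L^\infty$ nature of the Teichm\"uller and Lipschitz metrics (emphasized in the introduction) is precisely what allows all disjoint flips in a round to execute in parallel at cost $O(1)$. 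The continuous stage then aligns the Fenchel--Nielsen lengths and twists with those of $X_0$ and contributes only $O(1)$, since the relevant parameter ranges are bounded.

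For the lower bound, I would exhibit two surfaces $X, Y \in \PBers$ together with a single curve $\alpha$ whose length ratio between them is of order $g+p$. Pick any $Y \in \PBers$ with short pants decomposition $P_Y$, so that $P_Y$ has total length $\lesssim g+p$. The complexity of the surface allows a combinatorial construction of a simple closed curve $\alpha$ with geometric intersection number $i(\alpha, P_Y) \gtrsim g+p$, obtained by concatenating seams through the pants of $P_Y$. The collar lemma for pants curves of length at most $\ep_M$ gives a uniform lower bound on the length contributed per crossing of $P_Y$, so $\length_Y(\alpha) \gtrsim g+p$. Extending $\alpha$ to a pants decomposition and choosing $X \in \PBers$ with that decomposition makes $\length_X(\alpha) \le \ep_M$. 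The estimate
\[
   \dL(X,Y) \;\ge\; \log \frac{\length_Y(\alpha)}{\length_X(\alpha)} \;\gtrsim\; \log(g+p)
\]
then delivers the lower bound in $\dL$, which transfers to $\dT$ via the inequality $\dL \le 2 \dT$ stated in the introduction.

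The hard part is the realization step in the upper bound: turning a round of simultaneous Whitehead moves into a path in $\PBers$ whose length is $O(1)$, truly independent of how many disjoint curves are flipped at once. An honest Whitehead move is a degeneration through zero curve length and is not literally a path in $\PBers$; it must be replaced by a finite detour across the interface between two Bers coordinate charts at the $\ep_M$ scale. Controlling the joint cost of many such simultaneous detours is precisely where the $L^\infty$ nature of $\dT$ and $\dL$ is essential, and it is also what makes the \emph{simultaneity} of the sorting algorithm critical: a sequential realization through single Whitehead moves would give at best a $\log(g+p) \cdot (g+p)$-type bound, linear in $g+p$ rather than logarithmic.
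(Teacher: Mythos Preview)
Your upper bound is essentially the paper's: reduce to the combinatorics of dual graphs, invoke the $O(\log(g+p))$ sorting algorithm in the metric of simultaneous Whitehead moves, and realize each simultaneous round as a bounded-distance step in $\PBers$. The paper handles the realization step by a direct compactness argument (the moduli space of four-holed spheres with all boundary curves and one interior curve of length $\ep_M$ is compact, so any two differ by a uniformly bounded quasiconformal map), rather than by your ``shrink--flip--re-expand'' description, but the content is the same and you have correctly located the crux.

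Your lower bound, however, has a genuine gap: it is an argument in \Teich space, not in moduli space. In $\PBers$ the distance is an infimum over \emph{all} label-preserving homeomorphisms $f\colon X\to Y$, so the inequality you want is
\[
   \dL(X,Y)\;=\;\inf_{f}\,\sup_{\alpha}\,\log\frac{\ell_Y\big(f(\alpha)\big)}{\ell_X(\alpha)}
   \;\ge\;\log(g+p),
\]
and for that you must produce, for \emph{every} $f$, a curve with large ratio. Your $\alpha$ is a single pants curve on $X$; by the change-of-coordinates principle there is a homeomorphism sending $\alpha$ to a curve of $P_Y$ of the same topological type, for which the ratio is $1$. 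So the displayed inequality $\dL(X,Y)\ge\log\big(\ell_Y(\alpha)/\ell_X(\alpha)\big)$ is simply false in moduli space.

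The paper avoids this by comparing quantities that are invariants of the hyperbolic surface and hence stable under all homeomorphisms. Concretely, it takes a \emph{line surface} $X$ (dual graph a long path, so $\diam(\overline X)\asymp g+p$) and a \emph{bouquet surface} $Y$ (dual graph of logarithmic diameter, so $\diam(\overline Y)\asymp\log(g+p)$), and argues that any Lipschitz map $Y\to X$ must stretch some arc by a factor $\succ (g+p)/\log(g+p)$; an alternative argument uses expander surfaces, on which every \emph{dividing} curve (a homeomorphism-invariant class) has length $\Omega(g)$. In either case the key is to anchor the estimate to a homeomorphism-invariant geometric feature, which your proposal does not do.
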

     
  \begin{introthm}[Height of \PMthick] \label{Thm:IntroHeight}
     
     \[ \HD \big( \PMthick, \PBers \big) \asymp \log \left( \frac{g+p}{\ep}
     \right). \] 
  
  \end{introthm}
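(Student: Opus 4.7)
The plan is to establish the upper and lower bounds on $\HD(\PMthick, \PBers)$ separately.

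For the upper bound, given any $X \in \PMthick$, I would invoke Bers' theorem to produce a pants decomposition $P = \{\alpha_1, \ldots, \alpha_n\}$ of $X$ with each $\ell_X(\alpha_i) \le B_{g,p}$, where Buser's estimate gives $\log B_{g,p} = O(\log(g+p))$. Using Fenchel-Nielsen coordinates adapted to $P$, I would define $Y \in \PBers$ to be the surface with every pants-curve length reset to $\ep_M$ and with twist coordinates matched to those of $X$. The $L^\infty$-type control of the \Teich and Lipschitz metrics by log-length ratios of pants curves in the thick part then yields
\[
  d(X, Y) = O\!\left(\log(B_{g,p}/\ep)\right) = O\!\left(\log((g+p)/\ep)\right),
\]
giving the upper bound.

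For the lower bound I would construct a single extremal surface $X^\ast \in \PMthick$ such that $d(X^\ast, Y) \gtrsim \log((g+p)/\ep)$ for every $Y \in \PBers$. The construction combines two ingredients. First, $X^\ast$ carries a short curve $\alpha$ of length $\ep$, so Thurston's formula $\dL(X^\ast, Y) \ge \log(\ell_Y(\alpha)/\ell_{X^\ast}(\alpha))$ supplies a $\log(1/\ep)$ contribution whenever $\ell_Y(\alpha) \ge \ep_M$, which holds for every $Y \in \PBers$. Second, away from $\alpha$ the surface $X^\ast$ realizes the extremal combinatorial configuration underlying the lower bound of \introthmref{IntroWidth}, so that matching the pants structure of $Y$ should cost an additional $\log(g+p)$.

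The main obstacle is producing an additive lower bound rather than the naive $L^\infty$ maximum of the two contributions. My plan is to exhibit, for each candidate $Y$, a single witness curve $\beta$ realizing $\ell_Y(\beta)/\ell_{X^\ast}(\beta) \gtrsim (g+p)/\ep$. When $\alpha$ fails to be a pants curve of $Y$, one takes $\beta = \alpha$: the collar lemma applied to the length-$\ep_M$ pants curves of $Y$ forces $\ell_Y(\alpha)$ to grow linearly in the intersection number $i(\alpha, P_Y)$, and the combinatorial complexity built into $X^\ast$ must be arranged so that this intersection number is of order at least $g+p$. When $\alpha$ is a pants curve of $Y$, the combinatorial rigidity behind \introthmref{IntroWidth} should supply an auxiliary curve whose length is controlled by the $\ep$-collar of $\alpha$ in $X^\ast$ but which is forced to traverse the combinatorially far pants structure of $Y$, producing the required ratio. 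The delicate step is verifying—via the collar lemma and subsurface-projection techniques—that no choice of $Y \in \PBers$ can escape both obstructions simultaneously; designing $X^\ast$ so that a pinched $\alpha$ is paired with the extremal combinatorial configuration of \introthmref{IntroWidth} in the complement is where the bulk of the work lies.
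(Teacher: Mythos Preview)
Your proposal has substantive gaps in both halves.

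\textbf{Upper bound.} You assert that resetting Fenchel--Nielsen lengths to $\ep_M$ and invoking ``$L^\infty$--type control by log-length ratios of pants curves in the thick part'' gives $d(X,Y)=O(\log((g+p)/\ep))$. This is exactly the hard step, not an input. The pants curves of $X$ have lengths ranging from $\ep$ up to $B_{g,p}\asymp g+p$; in this regime there is no off-the-shelf estimate, with constants independent of $g,p$, bounding $\dT$ or $\dL$ by the maximum log-length-ratio over a \emph{single} pants system. The paper devotes all of \secref{UpperHeight} to this: it compares $\ext_Y(\gamma)$ and $\ext_X(\gamma)$ for \emph{every} curve $\gamma$ via the chain \eqref{Eq:eY>lY}--\eqref{Eq:lX>eX}, the last of which (comparing hyperbolic and extremal length on $X$ with only polynomial loss in $(g+p)/\ep$) requires a Delaunay-triangulation argument. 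Your Fenchel--Nielsen shortcut would need an argument of comparable strength to control the transverse curves.

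\textbf{Lower bound.} Your plan is both overcomplicated and misdirected. First, there is no need for a single witness $X^\ast$ realizing an additive bound: the Hausdorff distance is a supremum over $\PMthick$, so one may use two separate witnesses and then note $\max(a,b)\ge\tfrac12(a+b)$. This is what the paper does in \corref{LowerHeight}. Second, and more seriously, the ``extremal combinatorial configuration underlying \introthmref{IntroWidth}'' cannot contribute to a height lower bound: the width examples (line, bouquet, expander surfaces) all lie \emph{in} $\PBers$, so their distance to $\PBers$ is zero. What is needed is a surface far from \emph{every} point of $\PBers$, i.e.\ a surface with large Bers constant. The paper supplies this via the double hairy torus $H$ (\lemref{HairyTorus}), for which $B(H)=\Omega(\sqrt{g+p})$; then any $Y\in\PBers$ must stretch some pants curve by $\Omega(\sqrt{g+p}/\ep_M)$ under $Y\to H$, giving $\dL(Y,H)\succ\log(g+p)$. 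A second, independent witness with a curve of length $\ep$ gives the $\log(1/\ep)$ contribution. Your proposal contains no analogue of the hairy torus and no mechanism forcing \emph{every} pants decomposition of $X^\ast$ to have a long curve.
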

  
  This pair of theorems can be viewed as a refinement of
  \thmref{IntroDiamModuli}. Using the triangle inequality, the upper bounds
  for the width and height of \PMthick provide the upper bound for the
  diameter of \PMthick. The lower bound for the height is a lower bound for
  the diameter. However, since our main interest is a better understanding
  of the shape of \PMthick, we include the lower bound arguments for both
  the width and the height in this paper.
  
  \subsection*{The diameter of space of graphs}

  The argument for the width involves solving a combinatorial problem of
  independent interest. By considering the dual graph of the pants
  decomposition, we can associate to every element $X \in \PBers$ a graph
  of rank $g$ with $p$ \emph{marked} valence-1 vertices and $(2g-2+p)$
  valence-3 vertices (see \secref{DualGraph}). Let \PGraph be the space of
  all such graphs. We consider the metric of \emph{simultaneous Whitehead
  moves} $d_S$ on \PGraph: a simultaneous Whitehead move on a graph is a
  composition of an arbitrary number of commuting Whitehead moves. This is
  a suitable metric for our purposes since we are considering $L^\infty$
  metrics on moduli space. We show that the spaces \PGraph (equipped
  with the $d_S$ metric) and \PBers are quasi-isometric and hence their 
  diameters are of the same order. 
  
  \begin{introthm}[Diameter of \PGraph] \label{Thm:IntroDiamGraph}
    
    \[ \diam_S \big( \Graphp \big) \asymp \diam_S \big( \PGraph \big)
    \asymp \log(g+p). \]

  \end{introthm}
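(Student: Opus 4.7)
My approach has two parts: a counting-style lower bound and a constructive upper bound driven by a parallel sorting algorithm.

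\emph{Lower bound.} A graph in \PGraph has $O(g+p)$ edges. A simultaneous Whitehead move is specified by a matching of edges (at most $2^{O(g+p)}$ choices) together with a bounded number of outcomes per edge, giving at most $c^{g+p}$ simultaneous Whitehead moves out of any vertex. Hence the ball of radius $k$ contains at most $c^{k(g+p)}$ elements. On the other hand, a direct count (using that one can independently vary both the combinatorial type of the underlying graph and the permutation of the $p$ labels) yields $|\PGraph| \ge (g+p)^{\Omega(g+p)}$, forcing $k \gtrsim \log(g+p)$. To transfer the lower bound to the quotient \Graphp, where the pure labeling contribution to the counting is lost, I would use a potential function such as the combinatorial diameter $D(G)$ of the underlying graph: a single round of simultaneous Whitehead moves shortens $D(G)$ by at most a factor of $2$, so graphs with $D(G) \asymp g+p$ (e.g.\ a caterpillar) lie at distance $\gtrsim \log(g+p)$ from any fixed balanced target.

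\emph{Upper bound.} Fix a canonical target $G_0 \in \PGraph$ --- say, a caterpillar tree with the $g$ loops attached at one end and the $p$ leaves labeled in a standard order. I plan to bring any $G \in \PGraph$ to $G_0$ in $O(\log(g+p))$ simultaneous Whitehead moves via three phases.

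\textbf{Phase 1 (Collect loops).} Gather the $g$ handle-loops onto a common spine by a doubling strategy that pushes loops along disjoint paths in parallel, taking $O(\log g)$ rounds.

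\textbf{Phase 2 (Balance the tree).} Internal Whitehead moves on a tree are rotations and disjoint rotations commute, so a divide-and-conquer rebalancing straightens the underlying unlabeled tree in $O(\log(g+p))$ rounds.

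\textbf{Phase 3 (Sort the labels).} Given any permutation of the leaf labels on the (now canonical) tree, resort to the identity in $O(\log p)$ simultaneous Whitehead moves. This is the combinatorial heart of the argument --- the sorting result advertised in the abstract --- and is where the main obstacle lies. I envision a merge-sort-style recursion: partition the labels into two halves, globally redistribute so that all labels destined for the left subtree end up there and analogously for the right, then recurse on the two subtrees in parallel (their edges being disjoint). The subtle point is that the redistribution step at each recursive level must itself be realizable in $O(1)$ simultaneous Whitehead moves; otherwise the depth compounds to $O((\log p)^2)$. Establishing this constant-depth parallel redistribution is the technical core, and is what I expect to be the hardest part of the whole argument.

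The upper bound for \PGraph automatically gives the upper bound for the quotient \Graphp, and combining with the lower bounds above yields $\diam_S(\Graphp) \asymp \diam_S(\PGraph) \asymp \log(g+p)$.
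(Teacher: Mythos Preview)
Your overall architecture for the upper bound --- reshape, then sort --- is exactly the paper's, and you have correctly located the technical core in the constant-depth redistribution step.  But two of your design choices diverge from the paper in ways that matter.

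First, your canonical target.  You aim for a caterpillar and then propose to merge-sort the labels on it.  On a caterpillar the two halves of the recursion are themselves linear, and the top-level redistribution (``all small labels to the left half, all large labels to the right half'') is already as hard as the original problem: there is no evident way to realise it in $O(1)$ rounds, because a label may have to travel across $\Theta(p)$ spine edges and each round advances it only a bounded number of steps.  The paper avoids this by taking as target a \emph{balanced} tree of height $O(\log n)$ (its ``fully-sorted tree'' $T_n$), first reducing the height of an arbitrary tree to $O(\log n)$ via simultaneous balance moves, and then sorting by the binary digits of the labels: at each stage every edge that is ready separates its descendants by one more bit, and this is a genuine $O(1)$-round step.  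The balanced shape is what makes the per-level redistribution constant-depth, so your Phase~3 as stated has a real gap unless you change the target.

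Second, your Phase~1 is unnecessary once you adopt the paper's reduction: rather than herding the $g$ loops together, the paper cuts each non-tree edge in half, labels the $2g$ resulting ends in pairs, and thereby embeds $\PGraph$ distance-nondecreasingly into the space $\text{Tree}(n)$ of labeled binary trees with $n=2g-2+p$.  Loops are then handled automatically by the label sort.

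Your lower bounds, on the other hand, are different from the paper's and in fact more elementary.  The paper obtains the lower bound indirectly through surface geometry: it builds a ``line'' surface and a ``bouquet'' surface in $\Bersp$ and shows, via Lipschitz length ratios, that $\dL$ between them is $\Omega(\log(g+p))$; this is then pushed back to $d_S$ on graphs through \lemref{WidthAboveGraph}.  Your counting argument for $\PGraph$ and your diameter-potential argument for $\Graphp$ are purely combinatorial and self-contained; the latter is correct (each new edge after a simultaneous move has endpoints at distance $\le 2$ in the old graph, so graph diameter changes by at most a factor of~$2$ per round), and in fact this single argument already covers both $\PGraph$ and $\Graphp$, making the counting argument redundant.
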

  
  The main argument for the upper bound of \thmref{IntroDiamGraph} boils
  down to an efficient sorting algorithm for labeled trees using
  simultaneous Whitehead moves (\secref{Trees}). Let \Tree be the space of
  rooted binary trees with labels $0,\ldots n$, equipped with the metric
  simultaneous Whitehead moves. There is a distance-increasing embedding of
  \PGraph into \Tree, where $n=2g-2+p$ (see \secref{GraphtoTree}). We
  introduce a sorting algorithm using simultaneous Whitehead moves and
  show that the algorithm can sort any tree in \Tree in $\log(n)$ number
  of steps. This bound also gives the desired upper bound for the diameter
  of \PGraph.

  The lower bound for \thmref{IntroDiamGraph} can be obtained by a simple
  example (see \secref{Examples}). In fact, more can be said for \Bers and
  \Graph. By the work of \cite{pinsker}, a generic point in \Graph is an
  expander graph (see \defref{Expander}). We show that the surfaces
  associated to expander graphs have the following property: By a
  \emph{dividing curve} on a surface of genus $g$ we will mean a separating
  curve which divides the surface into two pieces, both of genus of order
  $g$. 

  \begin{introthm} \label{Thm:IntroExpander}
    
     For any surface in \Bers associated to an expander graph, the length
     of any dividing curve is at least of order $g$.

  \end{introthm}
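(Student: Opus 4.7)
The plan is to fix the short pants decomposition $\alpha_1,\ldots,\alpha_{3g-3}$ whose dual is the given expander graph $G$, replace $\gamma$ by its hyperbolic geodesic representative (still dividing, and no longer than the original), and partition the set $V$ of pants as $V=V_A\sqcup V_M\sqcup V_B$ according to whether each pants lies entirely in $S_A$, is cut by arcs of $\gamma$, or lies entirely in $S_B$. The objective is to prove $|V_M|\gtrsim g$; this will yield $\ell(\gamma)\gtrsim g$ via a collar estimate.

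First I would obtain the collar estimate. Since $\ell(\alpha_i)\le\ep_M$ for every $i$, the collar lemma supplies an embedded annular neighborhood of $\alpha_i$ of uniform width $w_0>0$ depending only on $\ep_M$, and $\gamma$ must traverse this collar at each transverse intersection with $\alpha_i$. Moreover each pants in $V_M$ contains at least one arc of $\gamma$ with both endpoints on its boundary, so
\[
\ell(\gamma)\;\ge\;w_0\sum_i i(\gamma,\alpha_i)\;\ge\;w_0\,|V_M|.
\]

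Next I would bound $|V_M|$ from below. Suppose toward contradiction that $|V_M|\le c\,g$ for a small constant $c>0$ to be chosen. Every pair of pants has hyperbolic area $2\pi$ by Gauss--Bonnet, and the same theorem applied to the geodesic-bounded surface $S_A$ gives $\area(S_A)=2\pi(2g_A-1)\asymp g$ since $g_A$ is of order $g$. Writing
\[
\area(S_A)\;=\;2\pi|V_A|+\sum_{P\in V_M}\area(P\cap S_A)\;\le\;2\pi\big(|V_A|+|V_M|\big),
\]
the smallness of $|V_M|$ forces $|V_A|\asymp g$, and the same reasoning applied to $S_B$ gives $|V_B|\asymp g$. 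I would then observe that the dual graph $G$ has no edge joining a vertex of $V_A$ to a vertex of $V_B$: such an $\alpha_i$ would have $S_A$ on one side of it and $S_B$ on the other, which would force $\alpha_i\subseteq\gamma$ and hence $\alpha_i=\gamma$; but a single-edge cut separating two vertex sets of size $\asymp g$ is ruled out by the expander hypothesis on $G$. Applying the Cheeger inequality for $G$ to the set $V_A$ now yields
\[
|E(V_A,V_M)|\;=\;|E(V_A,V_A^c)|\;\ge\;h\min\big(|V_A|,|V_A^c|\big)\;\asymp\; g,
\]
while trivalence of $G$ gives $|E(V_A,V_M)|\le 3|V_M|\le 3c\,g$. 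Choosing $c$ small relative to the expansion constant $h$ produces a contradiction, so $|V_M|\gtrsim g$ and $\ell(\gamma)\gtrsim g$ as desired.

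The main subtlety I foresee lies in the $V_A$--$V_B$ edge dichotomy: one has to verify carefully that the flanking collars of such an $\alpha_i$ really do sit on opposite sides of $\gamma$ (so that $\alpha_i$ must coincide with $\gamma$ rather than merely share a subsegment), and one must confirm that the collar width $w_0$ and the $2\pi$ pants-area bound depend only on $\ep_M$ and not on the particular surface $S\in\Bers$, so that the implied constants are genuinely uniform across the family of surfaces associated to expander graphs.
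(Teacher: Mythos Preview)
Your argument is correct and is essentially the same as the paper's: both use the expander property of the dual graph to force $\gamma$ to meet $\Omega(g)$ pairs of pants, and then convert this into a length bound via the uniformly short pants curves. The only difference is packaging---the paper projects $U=S_A$ and $\gamma$ to the graph via a shadow map $\Upsilon:Z_g\to G$ and applies the edge-based Cheeger inequality directly to $\Upsilon(U)$, whereas you work with the three-way vertex partition $V_A\sqcup V_M\sqcup V_B$ and a Gauss--Bonnet area count; the underlying logic (and your handling of the $V_A$--$V_B$ edge case and the uniformity of $w_0$) is identical.
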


  If we fix a surface $X \in \Bers$ with a dividing curve of length of
  order 1, then the above theorem implies the distance between $X$ and any
  surface in \Bers associated to an expander graph is at least of order
  $\log(g)$. This gives a lower bound of order $\log(g)$ for the diameter of 
  \Bers. However, noting that $\Bers$ is quasi-isometric to $\Graph$, 
  this also shows that expanders are not equidistributed in \Graph, 
  nor are they coarsely dense. This answers a question of Mirzakhani
  in the negative:
  
  \begin{introcor} \label{Cor:Not-Dense}
    
  Expander graphs are not coarsely equidistributed in the space \Graph equipped 
  with the metric $d_S$. In fact, \Graph contains a ball of radius $r$, where $r$ is
  on the order of the diameter of \Graph, that contains no expander graph. 

  \end{introcor}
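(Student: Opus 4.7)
The plan is to realize the strategy hinted at just before the corollary: produce an explicit graph $G_0 \in \Graph$ whose associated surface has a short dividing curve, and then use the quasi-isometry between \Graph and \Bers together with the $L^\infty$ nature of the metrics to show that no expander graph is close to $G_0$.

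First, I would construct $G_0$ concretely. Take two trivalent ribbon graphs $G_1, G_2$ of rank approximately $g/2$ and join them by a single edge $e$ to form a trivalent graph $G_0$ of rank $g$. Under the quasi-isometry $\Graph \to \Bers$ used in the proof of \thmref{IntroDiamGraph}, there is a surface $X_0 \in \Bers$ realising $G_0$ as the dual graph of its shortest pants decomposition. The curve $\gamma_0 \subset X_0$ dual to $e$ is a \emph{dividing} curve, and by construction its length is of order $\ep_M$; in particular, its length $\ell_0$ is of order $1$. Thus $X_0$ fails the conclusion of \thmref{IntroExpander} in a very strong way.

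Now let $G'$ be any expander graph in \Graph and $X' \in \Bers$ be an associated surface. The quasi-isometry gives $d(X_0, X') \le C\, d_S(G_0, G') + C$ for a universal constant $C$. Because both our metrics on moduli space are $L^\infty$ metrics, the length of any fixed homotopy class of simple closed curves is Lipschitz in the metric: for the Lipschitz metric this is the definition, and for the \Teich metric one has the standard estimate $\log(\ell_{X'}(\gamma_0)/\ell_{X_0}(\gamma_0)) \le 2\, \dT(X_0, X')$ via extremal length. Applied to $\gamma_0$, we obtain
\[
\ell_{X'}(\gamma_0) \;\le\; \ell_0 \cdot e^{C'\, d_S(G_0, G')\, +\, C'}
\]
for a universal constant $C'$. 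On the other hand, \thmref{IntroExpander} applied to $X'$ forces every dividing curve, and in particular $\gamma_0$, to have length at least $c\, g$ for a universal $c > 0$. Combining the two bounds gives $d_S(G_0, G') \ge \tfrac{1}{C'}\log(g) - C''$, so no expander lies in the ball of radius $r := \tfrac{1}{2C'}\log(g)$ around $G_0$. By \thmref{IntroDiamGraph} the diameter of \Graph is of order $\log(g)$, hence $r$ is of the order of the diameter, which is the claimed corollary.

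The only delicate ingredient is the translation from the combinatorial move-distance $d_S$ to multiplicative control on the length of the single curve $\gamma_0$; this is exactly the content of the quasi-isometry $\Graph \to \Bers$ together with the $L^\infty$ property of \dT and \dL that is already used throughout the paper, so no new machinery is required.
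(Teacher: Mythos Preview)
Your argument is correct and is essentially the one the paper has in mind: the text immediately preceding the corollary sketches exactly this, and \lemref{ExpandertoLineandBouquet} carries out the key length-ratio step (a dividing curve of length $\ep_M$ on the reference surface versus length $\Omega(g)$ on any expander surface, forcing $\dL \succ \log g$, hence $d_S \succ \log g$ via \lemref{WidthAboveGraph}). Two minor remarks: your description of $G_0$ as two trivalent graphs joined by a single edge needs a small correction (attach the edge at subdivided midpoints, or use graphs with one valence-$1$ vertex each, so that the result stays trivalent; the paper just uses the line graph $\Gamma_g$ from \secref{LineBouquet}); and you only use the direction $\dT \le K\, d_S$ of \lemref{WidthAboveGraph}, so you don't actually need the full quasi-isometry you invoke.
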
  
  
  \thmref{IntroExpander} also follows from Buser's work in
  \cite{buser:Eigenvalues}.

  \subsection*{Related combinatorial problems}

  Similar combinatorial problems have been considered previously in the
  literature. In \cite{bose:SDF}, an algorithm was established, using
  $\log(n)$ simultaneous flips, to transform any triangulation of an
  $n$--gon into any other. This can be rephrased in terms of a
  simultaneous-type metric on the space of \emph{unlabeled planar} trees.
  However, the result in \cite{bose:SDF} and \thmref{IntroDiamGraph} do not
  imply each other. The trees in \cite{bose:SDF} do not have labels and
  they require their elementary moves to preserve a given embedding of a
  tree in the plane. 
      
  Another related problem of interest is computing the diameter of the
  space of pants decompositions on a surface \sp or $\sp / \Sym_p$ up to
  homeomorphisms, equipped with the metric of elementary moves. This is
  equivalent to computing the diameter of \PGraph or \Graphp in the metric
  of Whitehead moves: two graphs have distance one if they differ by a
  single Whitehead move. One can compute these diameters using the existing
  works of \cite{boll:NRG} and \cite{thurston:RDT}, though neither the
  details nor the statements are contained in any existing literature as
  far as the authors know. (In the case of \Graph, an alternative proof is
  presented in \cite{cavendish:WH}.) We have included in this paper an appendix
  estimating the diameters of \PGraph and \Graphp in the metric of
  Whitehead moves (see \thmref{WHLabeled} and \thmref{WHUnlabeled}). What
  is worth noting is that in this metric the diameters of \PGraph and
  \Graphp are not the same.
  
  The theorem in \cite{thurston:RDT} is very general and gives a uniform upper 
  bound for the growth rate of the number of elements in a ball of radius $r$ in 
  any space of shapes when shapes are allowed to evolve through locally supported 
  elementary moves. Whitehead moves certainly fit that description. Another 
  example of interest is the mapping class group
  equipped with the word metric coming from the Lickorish generators
  \cite{Lic}. It is an immediate consequence of their theorem that the growth 
  rate of the mapping class group with appropriate set of generators is independent 
  of the complexity of the surface. 
   
  \subsection*{Outer space} 
  
  Our results can be extended to the setting of metric graphs which is of
  interest to the study of outer automorphism of free groups. Let $R_n$ be
  a wedge of $n$ circles. The \emph{moduli space of metric graphs} \X is
  the set of non-degenerate metrics graphs of volume $n$ with homotopy type
  $R_n$ (see \secref{OuterSpace}). (This is the same as the quotient of
  Outer space by the outer automorphism group of $\F$ \cite{vogtmann:OS}.)
  A well-studied metric on \X is the Lipschitz metric $\dL$, defined in the
  same way as for surfaces. The \ep--thick part \Xthick of \X consists of
  graphs with a lower bound \ep for the length of the shortest loop. We
  prove:

  \begin{introthm}[Diameter of \Xthick] \label{Thm:IntroDiamOuter}

     \[ \diam_L \big( \Xthick \big) \asymp \log \left( \frac{n}{\ep} \right). \]

  \end{introthm}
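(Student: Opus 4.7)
\emph{Proof proposal.} The plan is to mirror the proof of \thmref{IntroDiamModuli}, adapted to marked metric graphs. Define the analog of the Bers region $\mathcal{B} \subset \X$ to be the subset of trivalent volume-$n$ graphs in which every edge length lies between two universal constants $c_0$ and $C_0$. I would prove analogs of \thmref{IntroWidth} and \thmref{IntroHeight}:
\begin{equation*}
  \diam_L\bigl(\mathcal{B}\bigr) \asymp \log n
  \quad\text{and}\quad
  \HD_L\bigl(\Xthick,\mathcal{B}\bigr) \asymp \log\!\bigl(n/\ep\bigr).
\end{equation*}
The triangle inequality then yields the upper bound in \thmref{IntroDiamOuter}, and the lower bound for the height yields the lower bound for the diameter.

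For the width, I would invoke the forgetful map from $\mathcal{B}$ to the space of rank-$n$ trivalent graphs (the natural analog of $\mathrm{Graph}$ in this setting) and apply the puncture-free case of \thmref{IntroDiamGraph} to reduce any two combinatorial types via $O(\log n)$ simultaneous Whitehead moves. The next task is to show that a single simultaneous Whitehead move on metric graphs in $\mathcal{B}$ is realized by a homotopy equivalence of Lipschitz constant $O(1)$: each elementary move collapses a length-$\asymp 1$ edge and expands a dual edge while redistributing volume locally, and pairwise commuting moves have disjoint support, so the local Lipschitz bounds do not compound. Composing the $O(\log n)$ simultaneous moves then gives the width upper bound.

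For the height, given $G \in \Xthick$, I would construct a Lipschitz path from $G$ to some $\tilde G \in \mathcal{B}$ by first folding long edges into chains of bounded-length edges (an isometric operation on the underlying metric space), then scaling each remaining short edge up to length $c_0$. The forward Lipschitz cost is at most $\log(c_0/\ep) = O(\log(1/\ep))$ and the return cost at most $\log(n/C_0) = O(\log n)$, yielding total Lipschitz distortion $O(\log(n/\ep))$. For the matching lower bound, I would take $G_\ep, G_n \in \Xthick$ sharing a central loop which has length $\ep$ in $G_\ep$ and length $\asymp n$ in $G_n$; any homotopy equivalence $G_\ep \to G_n$ must stretch this loop by a factor at least $n/\ep$, forcing $\dL(G_\ep, G_n) \geq \log(n/\ep)$.

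The main obstacle will be verifying that simultaneous Whitehead moves in $\mathcal{B}$ correspond to $O(1)$ steps in $\dL$ uniformly in the number of commuting elementary moves; while each elementary move is locally bounded, the simultaneous execution must be modeled as a single homotopy equivalence rather than a composition in order for the Lipschitz constants not to accumulate. A secondary subtlety is the asymmetry of $\dL$, which requires checking the height estimate in both directions so as to conclude a bound on the Hausdorff distance.
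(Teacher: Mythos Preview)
Your width argument via simultaneous Whitehead moves is essentially the paper's, and the width/height decomposition is a reasonable framework. But there are genuine gaps in both the height upper bound and the lower bound.

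\medskip

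\textbf{Height upper bound.} The $\ep$--thick condition in $\X$ bounds the length of \emph{loops}, not of individual edges; a graph $G\in\Xthick$ may have edges of arbitrarily small length and vertices of arbitrarily high valence. Consequently ``scaling each remaining short edge up to length $c_0$'' has no Lipschitz bound in terms of $\ep$, and ``folding long edges into chains of bounded-length edges'' (i.e.\ subdividing) introduces valence--$2$ vertices, which are forbidden in $\X$. So the map $G\to\tilde G$ you describe does not land in $\mathcal B$ with controlled forward cost. The paper avoids this by routing through the rose: collapse a shortest spanning tree of $G$ to a point, observe that each surviving edge then has length at least $\ep/(kn)$, and rescale to get an $O(n/\ep)$--Lipschitz map $G\to R_n$; then reach a unit-edge trivalent graph $G'$ from $R_n$ by $\lceil\log_2 n\rceil$ fold--and--stretch steps. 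In the other direction, any $H\in\Xthick$ is the image of a unit-edge trivalent $H'$ under an $n$--Lipschitz collapse. The paper simply chains $G\to R_n\to G'\to H'\to H$ and multiplies the four constants, rather than establishing a symmetric Hausdorff height.

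\medskip

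\textbf{Lower bound.} In moduli space there is no marking, so a homotopy equivalence $G_\ep\to G_n$ is not obliged to carry your designated $\ep$--loop to your designated $n$--loop; it may send it to the shortest loop of $G_n$. To force a stretch of $n/\ep$ purely from loop lengths you would need \emph{every} loop in $G_n$ to have length of order $n$, which is impossible for a rank--$n$, volume--$n$ graph. The paper instead obtains the two contributions separately and adds them: a graph $H$ with a loop of length $\ep$ gives $\dL(H,R_n)\ge\log(1/\ep)$ since every loop in $R_n$ has length at least $1$; and a graph $\Gamma$ of diameter of order $n$ gives $\dL(R_n,\Gamma)\succ\log n$, because a homotopy equivalence out of a graph with no valence--$\le 2$ vertices is necessarily surjective, and an $L$--Lipschitz surjection from a diameter--$1$ graph has image of diameter at most $L$. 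The diameter of $\Xthick$ is then at least half the sum.
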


  \subsection*{Outline of the paper}

  The organization of the paper is as follows:

  \begin{itemize}
  
  \item[\secref{Prelim}] This section contains the background material for
  the paper. In this section, we will also introduce the metric of
  simultaneous Whitehead moves on graphs and explain how to associate to
  any surface in \PBers a rooted binary tree with labels. 

  \item[\secref{Trees}] This section contains the main algorithm which provides
  the upper bound for
  the diameter of \Tree in the metric of simultaneous Whitehead moves.
  As applications, we obtain the upper bound of \thmref{IntroDiamGraph} and
  \thmref{IntroWidth}. 
  
  \item[\secref{OuterSpace}] This section applies the work on trees to
  obtain the upper bound of \thmref{IntroDiamOuter}.
  
  \item[\secref{Examples}] This section is devoted to constructing some
  interesting examples of surfaces, which includes an example for
  \thmref{IntroExpander}. These examples also provide the lower bounds for
  \thmref{IntroWidth}, \thmref{IntroHeight}, and \thmref{IntroDiamGraph}.
  We also complete the proof of \thmref{IntroDiamOuter} in this section.
  
  \item[\secref{UpperHeight}] The final section of the paper contains the
  argument for the upper bound of \thmref{IntroHeight}. We also collect our
  results together to obtain \thmref{IntroDiamModuli}.

  \item[\secref{Appendix}] In this section, we compute the asymptotic
  diameter of \PGraph and \Graphp in the metric of Whitehead moves. The
  results and proofs of this section are independent of the rest of the
  paper. 
  
  \end{itemize}
  
  \subsection*{Acknowledgments}

  We thank Curt McMullen for pointing out to us the example of the double
  hairy torus. We also thank Saul Schleimer for a helpful conversation
  which contributed to a simplification of the arguments contained in the
  last section. Finally, we thank the referee for helpful comments. 

\section{Preliminaries}

  \label{Sec:Prelim}
  
  \subsection*{Moduli spaces}

  Let \sp be a connected, oriented surface of genus $g$ with $p$ labeled
  punctures. We require the Euler characteristic $\chi(\sp)=2-2g-p$ to be
  negative. Let \PM be the moduli space of complete, finite-volume,
  hyperbolic surfaces of homeomorphism type \sp, up to label-preserving
  isometries. The quotient \Mp of \PM by the symmetric group $\Sym_p$ on
  $p$ letters is the moduli space of unlabeled punctured surfaces. One can
  also think of (\PM) \Mp as the quotient of the \Teich space of \sp by the
  (pure) mapping class group of \sp. We refer to \cite{gardiner:QT},
  \cite{hubbard:TT}, and \cite{farb:MCG} for more details. 

  By a \emph{curve} on \sp, we will always mean a free homotopy class of a
  simple closed curve which is not homotopic to a point or to a puncture.
  For a hyperbolic surface $X$, any curve has a unique geodesic
  representative which is the shortest in its homotopy class. Given a curve
  $\alpha$ on $X$, let $\ell_X(\alpha)$ be the length of the geodesic
  representative of $\alpha$ on $X$. A curve $\alpha$ is called a
  \emph{systole} on $X$ if $\ell_X(\alpha)$ is minimal among all curves on
  $X$. We will let $\ell(X)$ be the length of a systole on $X$. Given $\ep
  > 0$, the \emph{\ep--thick part} of \PM is \[ \PMthick = \big\{ \, X \in
  \PM \,\,\,:\,\,\, \ell(X) \ge \ep \, \big\}. \] To make \PMthick
  non-empty and connected, we consider only $\ep \le \ep_M$, where $\ep_M$
  is a fixed constant such that for any $X \in \PM$, if two distinct curves
  on $X$ have lengths less than $\ep_M$ then they are disjoint. The
  constant $\ep_M$ is called the \emph{Margulis constant} and is
  independent of $g$ and $p$. 

  \subsection*{Two $L^\infty$ metrics}

  \label{Sec:TwoMetrics}

  We consider two $L^\infty$ metrics on \PM. Let $X, Y \in \PM$, 

  \begin{itemize}

     \item \emph{\Teich metric}: \[ \dT(X,Y) = \frac{1}{2} \inf_f
     \big\{\, \log K(f) \,\,\, : \,\,\, f: X \to Y \text{ is }
     K(f)\text{--quasi-conformal } \,\big\}. \] 

     \item \emph{Lipschitz metric}: \[ \dL(X,Y) = \inf_f \big\{\, \log L(f)
     \,\,\, : \,\,\, f: X \to Y \text{ is } L(f)\text{--Lipschitz }
     \,\big\}. \]
    
  \end{itemize} 
  The Lipschitz metric was introduced by Thurston in \cite{thurston:MSM}.
  Unlike the \Teich metric, the Lipschitz metric is not symmetric and one
  needs to be careful when choosing the order of the two points when computing
  the distance. Both metrics induce the same topology on \PM. We have the
  following inequality:
  \begin{equation} \label{Eq:TwoMetrics} 
     \frac{1}{2} d_L(X,Y) \le \dT(X,Y). 
  \end{equation} 
  \eqnref{TwoMetrics} follows from two facts. The first fact, due to
  Wolpert, asserts that under any $K$--quasiconformal map, the hyperbolic
  length of any curve changes by at most a factor of $K$ \cite[Lemma
  3.1]{wolpert:LS}. The second fact is due to Thurston \cite{thurston:MSM}:
  
  \begin{theorem}[Thurston] 
  
  For any $X, Y \in \PM$, 
     \[ 
       \dL(X,Y) = \sup_\alpha \inf_f
       \, \log \frac{\ell_Y \big(f(\alpha) \big)}{\ell_X(\alpha)},
     \] 
  where the $\sup$ is taken over all curves on $X$ and the $\inf$ is taken
  over all Lipschitz maps from $X$ to  $Y$.
  \end{theorem}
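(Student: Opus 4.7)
The plan is to split the identity into two inequalities. Denote the right-hand side by $K$. The direction $K \le \dL(X,Y)$ is elementary: for any Lipschitz map $f^*\colon X \to Y$ with constant $L(f^*)$ and any curve $\alpha$, applying $f^*$ to the geodesic representative of $\alpha$ on $X$ produces a loop in $Y$ of length at most $L(f^*)\cdot \ell_X(\alpha)$ in the free homotopy class $[f^*(\alpha)]$, so $\ell_Y(f^*(\alpha)) \le L(f^*)\cdot \ell_X(\alpha)$. Hence $\inf_f \log\bigl(\ell_Y(f(\alpha))/\ell_X(\alpha)\bigr) \le \log L(f^*)$ for every $f^*$; first taking $\sup_\alpha$ on the left and then $\inf_{f^*}$ on the right gives the bound.

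For the reverse inequality the task is to construct, given any $\delta > 0$, a Lipschitz map $f\colon X \to Y$ with $\log L(f) \le K + \delta$. Following Thurston, I would extend the length functionals $\ell_X$ and $\ell_Y$ continuously from weighted simple closed curves to the space of measured laminations, and observe that the supremum defining $K$ is equivalently taken over $\PML$. Since $\PML$ is compact and the length ratios are continuous, this supremum is attained by some measured lamination $\mu$. I would then enlarge the support of $\mu$ to a maximal geodesic lamination $\Lambda$, whose complements in $X$ and in $Y$ consist of ideal triangles in natural bijection. On each pair of corresponding triangles I would construct a stretch map using the horocyclic foliation transverse to $\Lambda$, stretching arc-length along leaves of $\Lambda$ by a factor of $e^K$ and having Lipschitz constant at most $e^K$ in the transverse direction as well. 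Gluing these local maps along the leaves of $\Lambda$ produces a global $e^K$-Lipschitz map $X \to Y$, proving $\dL(X,Y) \le K$.

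The principal obstacle is the construction of the stretch map on an ideal triangle and the verification that the local pieces assemble into a globally $e^K$-Lipschitz map. The subtle point is that while the stretch factor along the leaves of $\Lambda$ is dictated by how $f$ must act on $\mu$, controlling the transverse Lipschitz constant forces one to use the horocyclic foliation of the ideal triangle and to carry out a careful comparison across leaves of $\Lambda$ to ensure the pieces match up continuously. This is the technical heart of \cite{thurston:MSM}, which I would invoke for this step rather than reproduce.
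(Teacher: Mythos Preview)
The paper does not give its own proof of this theorem; it is stated as a result of Thurston and attributed to \cite{thurston:MSM}, with no argument supplied. So there is no ``paper's proof'' to compare against, and your proposal is really an outline of Thurston's original argument.

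Your sketch of the easy inequality $K \le \dL(X,Y)$ is correct. For the hard direction, the overall strategy you describe---pass to $\PML$, use compactness to find a ratio-maximizing lamination $\mu$, complete it to a maximal lamination $\Lambda$, and build a stretch map from the horocyclic foliations of the complementary ideal triangles---is indeed Thurston's. However, there is an oversimplification in the step where you ``glue these local maps \ldots\ produces a global $e^K$-Lipschitz map $X \to Y$.'' The stretch map along $\Lambda$ starting at $X$ with stretch factor $e^K$ lands on \emph{some} hyperbolic surface, but there is no a priori reason that surface is $Y$: the stretch line through $X$ in direction $\Lambda$ is a particular path in Teichm\"uller space, and $Y$ need not lie on it. Thurston's actual argument (Theorem~8.5 of \cite{thurston:MSM}) handles this by showing that $X$ and $Y$ can be joined by a \emph{concatenation} of stretch segments, each stretching along a (possibly different) maximal lamination containing the maximally-stretched lamination, with total logarithmic stretch equal to $K$. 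Your outline elides this, and as written the construction does not produce a map to $Y$. Since you already plan to invoke \cite{thurston:MSM} for the technical core, this is not fatal, but you should be aware that ``construct the stretch map and it lands at $Y$'' is not what happens.
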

  
  To compute distances in the \Teich metric, Kerckhoff has a similar
  formula using extremal lengths of curves \cite{kerckhoff:AG}. For any $X
  \in \PM$, the \emph{extremal length} of a curve $\alpha$ on $X$ is
  defined to be 
  \[ 
     \ext_X(\alpha) := \sup_{\rho} \frac{\ell_\rho (\alpha)^2}{\area(\rho)}.
  \] 
  Here, $\rho$ is any metric in the conformal class of $X$,
  $\ell_\rho(\alpha)$ is the $\rho$--length of the shortest curve in the
  homotopy class of $\alpha$, and $\area(\rho)$ is the area of the surface
  $X$ equipped with the metric $\rho$. 
  
  \begin{theorem}[Kerckhoff] \label{Thm:Ker}
     
     For any $X, Y \in \M$,
     \[ \dT (X,Y) = \frac{1}{2} \sup_{\alpha} \inf_{f}\, \log
     \frac{\ext_Y \big( f(\alpha) \big)}{\ext_X(\alpha),}
     \]
     where the $\sup$ is taken over all curves on $X$ and the $\inf$ is
     taken over quasi-conformal maps from $X$ to $Y$.

  \end{theorem}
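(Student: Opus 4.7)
The plan is to prove the two directions separately, first in \Teich space and then descending to $\M$. For the upper bound, fix a \Teich-extremal quasi-conformal map $f_0 \from X \to Y$, so that $K(f_0) = e^{2\dT(X,Y)}$. Gr\"otzsch's inequality (equivalently, the characterization of extremal length as the reciprocal of the supremum of moduli of embedded annuli in the homotopy class) implies that a $K$--quasi-conformal map distorts extremal length by a factor of at most $K$:
\[
   \ext_Y(f_0(\alpha)) \;\le\; K(f_0) \cdot \ext_X(\alpha)
\]
for every curve $\alpha$ on $X$. Since $\inf_f$ is bounded above by the value at $f_0$, taking logarithms, dividing by two, and taking $\sup_\alpha$ yields $\tfrac{1}{2}\sup_\alpha \inf_f \log \ext_Y(f(\alpha))/\ext_X(\alpha) \le \dT(X,Y)$.

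For the reverse inequality, Teichm\"uller's existence theorem expresses $f_0$ as an affine map with respect to the flat structures determined by holomorphic quadratic differentials $q_X$ on $X$ and $q_Y$ on $Y$: it stretches the horizontal measured foliation $F$ of $q_X$ by $\sqrt{K(f_0)}$ and compresses the vertical foliation by $1/\sqrt{K(f_0)}$. Using that the extremal length of a measured foliation is realized by its associated flat metric (Hubbard--Masur), a direct calculation in the flat structure gives
\[
   \ext_Y(f_{0*} F) \;=\; K(f_0) \cdot \ext_X(F).
\]
Combining Kerckhoff's continuous extension of extremal length from weighted simple closed curves to all of $\ML$ with Thurston's density of projective classes of simple closed curves in $\PMF$, one can approximate $F$ by weighted simple closed curves $t_n \alpha_n \to F$; by the homogeneity $\ext(t\gamma) = t^2 \ext(\gamma)$, the weights $t_n^2$ cancel in the ratio, and $\ext_Y(f_0(\alpha_n))/\ext_X(\alpha_n) \to K(f_0) = e^{2\dT(X,Y)}$.

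The main obstacle is the $\inf_f$ appearing in the statement, which is present because we work on $\M$ rather than on $\T$ and therefore have no canonical identification between curves on $X$ and curves on $Y$. The cleanest route is to first establish the identity on $\T$---where no $\inf_f$ is needed and the formula reads $\dT(\widetilde X, \widetilde Y) = \tfrac{1}{2}\sup_\alpha \log \ext_{\widetilde Y}(\alpha)/\ext_{\widetilde X}(\alpha)$---and then descend. The key point is that for the approximating curves $\alpha_n$ carried by the horizontal foliation $F$ of the extremal map $f_0$, any competing quasi-conformal $f = \phi \circ f_0$ corresponds to a mapping class $\phi$ for which the $\T$-distance between the relevant marked lifts of $X$ and $Y$ is at least $\dT(X,Y)$; applying the $\T$-version of Kerckhoff's formula in that marking forces $\ext_Y(f(\alpha_n))/\ext_X(\alpha_n) \ge K(f_0) - o(1)$, so the inner infimum does not reduce the limiting ratio and $\sup_\alpha \inf_f$ recovers exactly $2\dT(X,Y)$.
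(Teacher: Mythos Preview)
The paper does not give a proof of this theorem; it is quoted from \cite{kerckhoff:AG} and invoked only in the easy direction (in \secref{UpperHeight}, to bound $\dT$ from above by a sup of extremal-length ratios once a map has been fixed). So there is no in-paper argument to compare your attempt against.

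On the merits of your attempt: the upper bound via Gr\"otzsch and the \Teich-space lower bound via the horizontal foliation of the \Teich map are the standard arguments and are correct. The gap is in your last paragraph. You observe that a competing map $f$ corresponds to a mapping class $\phi$ with $d_{\T}(\tilde X,\phi^{-1}\tilde Y)\ge d_{\T}(\tilde X,\tilde Y)$, and then invoke the \Teich-space formula for the pair $(\tilde X,\phi^{-1}\tilde Y)$. But that formula only controls the \emph{supremum} of the extremal-length ratio over all curves, not its value at the particular curve $\alpha_n$ you fixed, which was tailored to $f_0$ and not to $\phi\circ f_0$. There is no reason $\alpha_n$ should be near-maximizing for every $\phi$ simultaneously, so your claimed inequality $\ext_Y(f(\alpha_n))/\ext_X(\alpha_n)\ge K(f_0)-o(1)$ does not follow.

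Indeed, the identity fails with the quantifiers in the order $\sup_\alpha\inf_f$ as literally written. Since any two simple closed curves of the same topological type are related by a homeomorphism of the surface, $\inf_f\ext_Y\big(f(\alpha)\big)$ depends only on the topological type of $\alpha$, and the whole expression collapses to $\max_T\log\bigl(m_Y(T)/m_X(T)\bigr)$, where $m_Z(T)$ is the minimal extremal length on $Z$ among curves of type $T$. There are only finitely many types $T$, so by a dimension count one can find $X\neq Y$ in \M with $m_X(T)=m_Y(T)$ for every $T$; the right-hand side then vanishes while $\dT(X,Y)>0$. Kerckhoff's actual theorem is the \Teich-space statement with no $\inf_f$, equivalently the moduli-space statement with the quantifiers ordered $\inf_f\sup_\alpha$; that is exactly what your middle paragraph establishes and what the paper actually uses.
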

  
  \subsection*{Notations}
  
  Throughout this paper, we will use the following set of notations. Given
  two quantities $a$ and $b$, we will write $a = O(b)$ to mean $a \le K b$,
  for some uniform constant $K$. Similarly, $a = \Omega(b)$ if $a \ge K b$.
  We will say $a$ is \emph{on the order} of $b$ and write $a \asymp b$ if
  $a = O(b)$ and $a = \Omega(b)$. To control notations in a string of
  inequalities, it is sometimes convenient to replace $a = O(b)$ by $a
  \prec b$, and $a = \Omega(b)$ by $a\succ b$.

  \subsection*{Pants decomposition and Bers' constant}
  
  Two isotopy classes of curves on \sp will be called disjoint if they have
  disjoint representatives. A \emph{multicurve} on \sp is a (non-empty)
  collection of distinct curves on \sp which are pairwise disjoint. A
  \emph{pants decomposition} $P$ of \sp is a multicurve such that each
  component of $\sp \setminus P$ is a 3-holed sphere, also called a \emph{pair of
  pants}. The number of curves in $P$ is equal to the \emph{complexity}
  $\xi(\sp) = 3g-3+p$ and the number of pants in a decomposition is equal
  to $\big| \chi(\sp) \big| = 2g-2+p$. 

  Let $B(X)$ be the minimal number such that $X$ admits a pants
  decomposition $P$ with $\ell_X(\alpha) \le B(X)$ for all $\alpha \in P$.
  Let \[ B_{g,p} = \sup_{X \in \PM} B(X) \] be the \emph{Bers' constant}
  for \sp. It is originally proved by Bers that $B_{g,p}$ is finite for all $g$ and
  $p$. For closed surfaces, Buser gave an explicit upper and lower bounds
  for $B_g$ in \cite[\S 5.2]{buser:GSC}:

  \begin{theorem}[Buser] \label{Thm:Buser}

     \[ \sqrt{6g} - 2 \le B_g \le 21(g-1) \]

  \end{theorem}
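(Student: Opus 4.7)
The plan is to prove the two inequalities separately. Both halves hinge on Gauss--Bonnet --- which gives $\area(X) = 4\pi(g-1)$ for a closed hyperbolic surface of genus $g$ --- together with the collar lemma. For the upper bound $B_g \le 21(g-1)$, I would build a pants decomposition of $X \in \M$ greedily. The key geometric input is that on any hyperbolic surface with geodesic boundary, one can grow embedded disks (or annular neighborhoods of an existing boundary component) until they first fail to be embedded. By area comparison in the hyperbolic plane, this radius is controlled by the current area, and at the moment of first failure one extracts a short essential simple closed curve by closing up the offending arc along the boundary. Cutting along the geodesic representative of this curve decreases $|\chi|$ by one, and recursing on each piece produces a pants decomposition after $3g-3$ cuts. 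Tracking how the length bound at each stage depends on the area of the current piece (which is always bounded by $4\pi(g-1)$) and summing over the iterations gives the explicit constant $21(g-1)$.

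For the lower bound $B_g \ge \sqrt{6g} - 2$, the plan is to exhibit a sequence $X_g \in \M$ on which every pants decomposition contains a long curve. Suppose $P = \{\gamma_1, \ldots, \gamma_{3g-3}\}$ is a pants decomposition of $X$ with every $\ell_X(\gamma_i) \le L$. The dual graph of $P$ is trivalent on $2g-2$ vertices, and one uses this combinatorial structure together with the collar lemma to control the geometry of $X$ in terms of $L$. Specifically, the union of the pants, together with disjoint collars around their boundary curves, must fit inside the total area $4\pi(g-1)$; comparing this with a carefully chosen $X_g$ that maximizes the obstruction (for instance a surface built from a high-girth trivalent graph by gluing standard pants, so that no short combinatorial pattern short-circuits the estimate) yields $L \succ \sqrt{g}$, and sharpening the optimization gives the explicit bound $\sqrt{6g} - 2$.

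The main obstacle in both halves is pinning down the specific constants $21$ and $\sqrt{6}$ rather than merely their orders of growth. For the upper bound, each iterative cut must be optimized, and the length bound at step $i$ depends on the area and complexity of the piece being cut, which varies with the history of previous cuts; a uniform bookkeeping that survives $3g-3$ iterations is what produces the factor $21$. For the lower bound, the substantive difficulty is constructing an extremal example with the right relationship between diameter, systole, and area --- this is where the $\sqrt{g}$ rate actually comes from, since a naive area count alone only yields a bound independent of $g$.
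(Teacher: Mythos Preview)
The paper does not prove this theorem: it is quoted from Buser's book \cite[\S 5.2]{buser:GSC} and used as a black box. So there is no in-paper proof to compare your proposal against.

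That said, a few comments on your sketch. Your plan for the upper bound is essentially Buser's own argument: grow a maximal embedded ball (or half-collar off an existing boundary), use hyperbolic area growth to bound its radius in terms of the remaining area, and extract a short essential curve at the moment of first self-intersection; then cut and recurse. This is correct in outline, and the constant $21$ does come from careful bookkeeping across the $3g-3$ steps.

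Your lower-bound plan, however, has a genuine gap. The sentence ``the union of the pants, together with disjoint collars around their boundary curves, must fit inside the total area $4\pi(g-1)$'' does not yield a constraint: the pants already tile the surface, so their total area \emph{is} $4\pi(g-1)$, and adding collars (which overlap the pants) gives no new inequality. More to the point, an area-versus-collar count of this type can at best bound the \emph{sum} of curve lengths, not force any single curve to be long. Buser's actual construction is the \emph{hairy torus}: a genus-one surface with roughly $g$ short geodesic boundary circles arranged in an $m\times m$ grid, capped off to get a closed surface. The mechanism is not a collar/area count but a projection argument: there is a uniformly Lipschitz map to a flat torus of side $\asymp m$, and any pants decomposition must contain a curve whose image is homologically nontrivial on the flat torus, hence of length $\succ m \asymp \sqrt{g}$. (A version of exactly this argument appears later in this paper, in \secref{HairyTorus}, where it is used to bound the height of $\Mthick$.) Your suggestion of using a high-girth trivalent graph is a different idea and would need a separate argument; it is not what produces the $\sqrt{6g}-2$.
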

  
  One can extend the proof of \thmref{Buser} to obtain upper and lower
  bound for the Bers' constant in the punctured case as well: $\Omega \big(
  \sqrt{g+p} \big) =B_{g,p} = O(g+p)$. The lower bound for $B_{g,p}$
  is obtained by Buser's hairy 
  torus construction. His construction does not lie in the thick part of moduli space. We
  give another construction in the thick part that gives the same lower bound (see
  \lemref{HairyTorus}). (In \cite{parlier:BPS},
  Parlier-Balacheff improved the upper bound of $B_{0,p}$ to match the lower bound, 
  but we will not need that here.)

  \subsection*{Width and height of \PMthick}

  \label{Sec:WidthHeight}

  Let $\PBers \subset \PMthick$ be the set of surfaces $X \in \PBers$ such
  that $X$ admits a pants decomposition where the length of every curve is
  exactly $\ep_M$. By our choice of $\ep_M$, intersecting curves cannot 
  have lengths $\ep_M$, therefore such a pants decomposition 
  on $X$ is unique. 
  
  Let $\diam_L \big( \PBers \big)$ be the maximal Lipschitz distance
  between any two points in \PBers. We will call this quantity the
  Lipschitz \emph{width} of \PMthick.  Let $\HD_L \big( \PMthick, \PBers
  \big)$ be the Lipschitz Hausdorff distance between \PMthick and \PBers,
  defined to be 
  \[
     \HD_L \, \big( \PMthick,\PBers \big) = \sup_{Y\in \PMthick} \,
     \inf_{X \in \PBers} \, \max \big\{ \dL(X,Y), \dL(Y,X) \big\}. 
  \] 
  This quantity will be called the Lipschitz \emph{height} of \PMthick. The
  \Teich width and height of \PMthick are defined similarly. 

  \subsection*{Dual graphs to pants decompositions}

  \label{Sec:DualGraph}

  To compare geometries of various surfaces in \PBers, we can look at the
  dual graph of the pants decompositions. Given $X \in \PBers$, let $P$ be
  the associated pants decomposition on $X$. The \emph{dual graph}
  $\Gamma_P$ of $P$ has a vertex for each pair of pants in $X \setminus P$
  or for each puncture of $X$. Two (not necessarily distinct) vertices are
  connected by an edge if either they represent two (not necessarily
  distinct) pairs of pants glued along some curve in $P$, or if one
  vertex is a puncture contained in the pants represented by the other
  vertex. See \figref{DualGraph} for an example in genus $8$.
  
  \begin{figure}[!htp]   
  \begin{center}
  \includegraphics{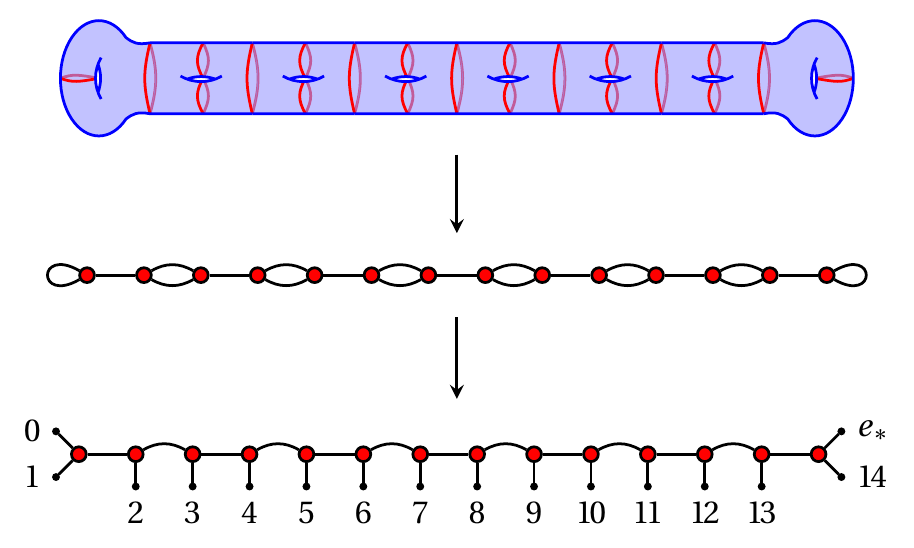}
  \end{center} 
  \caption{From pants decomposition to trivalent graph; 
  from trivalent graph to labeled tree.} 
  \label{Fig:DualGraph} 
  \end{figure}
  
  For closed surfaces of genus $g$, the dual graph to a pants decomposition
  is always a trivalent graph of rank $g$. For punctured surfaces, the dual
  graph is a graph of rank $g$ with $p$ \emph{marked} valence--1 vertices
  and $2g-2+p$ valence--3 vertices. Let \PGraph be the set of all such
  graphs. Let 
  \[ 
     \psi : \PBers  \to \PGraph,
  \]
  be the map defined by the dual graph construction. The map $\psi$ is
  surjective. Shearing along each pants curve does not change the dual
  graph, thus each fiber is a ($3g-3+p$)--dimensional torus, and since each
  pants curve has length $\ep_M$, each fiber has uniformly bounded
  diameter. 

  \subsection*{Whitehead moves on graphs}
  
  \label{Sec:Whitehead}

  Given $\Gamma \in \PGraph$, we will call an edge $e$ of $\Gamma$
  \emph{interior} if both vertices of $e$ have valence three; otherwise $e$
  is \emph{exterior}. From any $\Gamma$, there is a way of deriving a new
  graph by modifying the local gluing structure about an interior edge $e$,
  called a \emph{Whitehead move} on $e$. A Whitehead move on
  $e$ is a process of collapsing $e$ and reopening in a different
  direction. We allow two ways to reopen, as illustrated in
  \figref{WhiteheadMove} on the left. 
  
  \begin{figure}[!htp]   
  \begin{center}
  \includegraphics{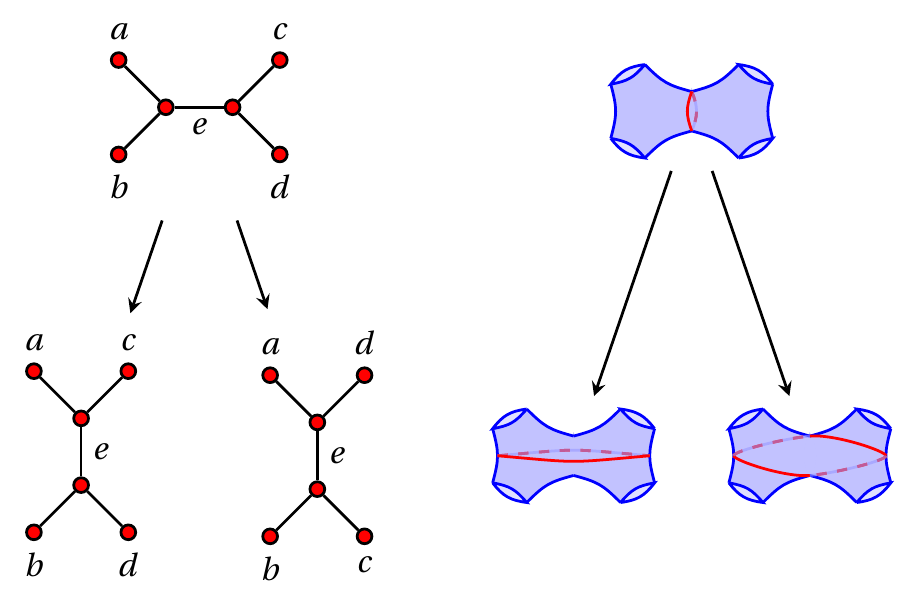}
  \end{center} 
  \caption{Whitehead on $e$ on left;
  corresponding elementary move on right.} 
  \label{Fig:WhiteheadMove}
  \end{figure}

  One may endow \PGraph with the metric $d_W$ of Whitehead moves:
  $d_W(\Gamma_1, \Gamma_2) = 1$ if and only if $\Gamma_1$ and $\Gamma_2$
  differ by a Whitehead move. Since our graph $\Gamma$ corresponds to a
  pants decomposition $P$ on a surface, there is a natural interpretation
  of Whitehead moves as \emph{elementary moves} on pants decompositions.
  Each edge $e$ in $\Gamma$ corresponds to two pairs of pants glued along a
  common curve $\alpha$. An elementary move on $\alpha$ changes $P$ by
  fixing all curves in $P \setminus \alpha$, and replacing $\alpha$ by a
  transverse curve that intersects it minimally.  Since only Whitehead
  moves associated to interior edges with distinct vertices are
  non-trivial, we only consider the case where $\alpha$ lies in a $4$-holed
  sphere. The two different directions of reopening represent the two
  minimally-intersecting transverse curves up to homeomorphisms of the
  surface. (See the right hand side of \figref{WhiteheadMove}.) Therefore,
  \PGraph equipped with the metric of Whitehead moves is isometric to the
  space of homeomorphism types of pants decompositions on \sp equipped with
  the metric of elementary moves. 
  
  \subsection*{Simultaneous Whitehead moves}
  
  \label{Sec:Simultaneous}

  Since we are
  considering $L^\infty$ metrics on moduli spaces, the metric of Whitehead
  moves on \PGraph is not the correct model metric for \PBers. 
  Our goal is to equip \PGraph with an appropriate metric so that its
  diameter is of the same order as the diameter of \PBers.  Making one
  Whitehead move corresponds to modifying a surface $X \in \PBers$ by an
  elementary move in a four-holed sphere. But modifying $X$ in several
  disjoint four-holed spheres at the same time contributes the same amount
  of distortion after taking the sup. This observation leads to the
  definition of \emph{simultaneous Whitehead moves}.

  For any graph $\Gamma \in \PGraph$, we will say two edges of $\Gamma$ are
  \emph{disjoint} if they do not share any vertices. Note that Whitehead
  moves on disjoint edges commute. Disjoint edges in $\Gamma$ correspond to
  four-holed spheres which have disjoint interiors. Hence the corresponding
  elementary moves also commute with each other. A \emph{simultaneous
  Whitehead move} on $\Gamma$ is a composition of Whitehead moves on an
  arbitrary number of pairwise disjoint edges in $\Gamma$. 

  We equip \PGraph with the metric $d_S$ of simultaneous Whitehead moves:
  $d_S (\Gamma_1,\Gamma_2) = 1$ if and only if $\Gamma_1$ and $\Gamma_2$
  differ by a simultaneous Whitehead move. The following
  lemma allows us to bound distances in \PBers by distances in
  \PGraph. Recall the dual graph map $\psi : \PBers \to \PGraph$. 

  \begin{lemma} \label{Lem:WidthAboveGraph}

     There exists a uniform constant $K$ such that for any $g$ and $p$, if
     $X, Y \in \PBers$, then 
     \[ 
        \dT(X,Y) \le \, K d_S \big(\psi(X), \psi(Y) \big). 
     \] 
  \end{lemma}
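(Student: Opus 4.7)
The plan is to prove the lemma in three layers: reduce to a single simultaneous move, exploit that such a move only deforms pairwise disjoint four-holed sub-surfaces, and invoke compactness to bound the dilatation of each local deformation uniformly.

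First, by the triangle inequality it suffices to handle the case $d_S\bigl(\psi(X),\psi(Y)\bigr)=1$; the general case then follows by iterating and absorbing the number of steps into the factor $d_S\bigl(\psi(X),\psi(Y)\bigr)$ on the right. So assume $\psi(X)$ and $\psi(Y)$ differ by a simultaneous Whitehead move on a collection $\{e_1,\dots,e_k\}$ of pairwise disjoint interior edges of $\psi(X)$. Each $e_j$ corresponds to a four-holed sphere $F_j$ in $X$ whose boundary curves have length $\ep_M$, and because the $e_j$ do not share vertices, the $F_j$ have pairwise disjoint interiors. Outside of $\bigcup F_j$ the pants decompositions of $X$ and of the intended target agree.

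Next, I would build an intermediate surface $Y'\in\PBers$ with $\psi(Y')=\psi(Y)$ obtained from $X$ by performing, inside each $F_j$, the elementary move dual to the Whitehead move on $e_j$; concretely, replace the length-$\ep_M$ curve $\alpha_j\subset F_j$ by a transverse curve $\beta_j$ and rescale the geometry of $F_j$ so that $\beta_j$ has length $\ep_M$, choosing the twist along $\beta_j$ arbitrarily. Since the fiber of $\psi$ over $\psi(Y)$ is a product of twist tori of uniformly bounded \Teich diameter (each factor has diameter controlled by $\ep_M$), we have $\dT(Y',Y)=O(1)$, so it suffices to show $\dT(X,Y')=O(1)$.

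To bound $\dT(X,Y')$, I construct a single quasi-conformal map $f\from X\to Y'$ that is the identity on $X\setminus\bigcup F_j$ and realizes the elementary move on each $F_j$, with dilatation uniformly bounded independent of $g$, $p$, and $j$. Because the $F_j$ are disjoint and $f$ is the identity on the complement, the global dilatation $K(f)$ equals $\max_j K(f|_{F_j})$ — this is precisely the $L^\infty$ feature that motivates the definition of $d_S$. So I only need a uniform bound on $K(f|_{F_j})$. Here I invoke compactness: up to homeomorphism there is only one topological type of elementary move on a four-holed sphere, and the moduli space of four-holed spheres with all four boundary lengths equal to $\ep_M$ and containing an interior simple closed curve of length $\ep_M$ is compact modulo the Dehn twist along that interior curve. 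Hence there is a uniform constant $K_0$ such that for any two such four-holed spheres, one can pick a q.c. homeomorphism between them of dilatation at most $K_0$; moreover we can arrange this map to be the identity in a collar neighborhood of the boundary (possibly enlarging $K_0$), so that it glues to the identity on the complement.

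The one point that needs care — and is the main technical obstacle — is this matching on collars of $\partial F_j$, since the twist parameters of $Y'$ along curves of $\psi(Y)$ lying outside $\bigcup F_j$ are determined by those of $X$, not chosen freely. This is why the bounded-fiber correction from $Y'$ to $Y$ is needed at the end, and why the uniform collar-straightening of $f|_{F_j}$ is the crucial compactness input. Combining $\dT(X,Y')=O(1)$ and $\dT(Y',Y)=O(1)$ finishes the unit-distance case, and iterating along a geodesic in $(\PGraph,d_S)$ yields the desired estimate $\dT(X,Y)\le K\,d_S\bigl(\psi(X),\psi(Y)\bigr)$.
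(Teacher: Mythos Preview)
Your proof is correct and follows essentially the same route as the paper: reduce to a single simultaneous move, invoke compactness of the moduli of four-holed spheres with boundary lengths $\ep_M$ containing an $\ep_M$--curve to get a uniform q.c.\ bound on each local elementary move, glue to a map $X\to Y'$ with $\psi(Y')=\psi(Y)$, and absorb the remaining twist discrepancy using the bounded-diameter fiber of $\psi$. The only small omission is that when $p>0$ some of the four ``holes'' of $F_j$ may be punctures rather than length-$\ep_M$ geodesics; the paper handles this by running the same compactness argument separately for each count $a\in\{0,1,2,3,4\}$ of punctured ends and taking the maximum of the resulting constants.
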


  \begin{proof}
    
     Let $S$ be a hyperbolic surface of genus $0$ with $4$ geodesic
     boundary components $\gamma_1, \ldots, \gamma_4$ such that each
     $\gamma_i$ has length $\ep_M$ and, furthermore, $S$ contains a
     non-peripheral curve $\alpha$ of length $\ep_M$. The moduli space of
     all such surfaces is compact, since there are only finitely many ways
     $\alpha$ can separate the curves $\gamma_i$ into two groups, and the
     amount of shearing along $\alpha$ is bounded by its length. Thus,
     there exists a uniform constant $K_0$, such that for any other such
     surface $S'$ with boundaries $\gamma_i'$ and an essential curve
     $\alpha'$ of length $\ep_M$, there is a $K_0$--quasi-conformal map $S
     \to S'$ taking each $\gamma_i$ to $\gamma_i'$ and $\alpha$ to
     $\alpha'$. 
     
     Similarly, we can consider the space of hyperbolic surfaces
     homeomorphic to a $4$--holed sphere, with $a$ punctures and $b$
     geodesic boundaries of length $\ep_M$ so that $a+b=4$. Let $K_a$ be a
     uniform constant depending only on $a$ such that there is
     $K_a$--quasi-conformal map between any two such surfaces fixing the
     boundaries and the punctures. Let $K'$ be the maximum of the $K_a$'s. 
        
     Now suppose $X, Y \in \PBers$ have $ d_S \big( \psi(X), \psi(Y) \big)
     = 1$. This means that there is a set of disjoint four-holed spheres in
     $X$ on which we need to make a modification as above. We can construct
     a $K'$--quasi-conformal map from $X$ to some surface $Y'$, locally
     using maps as above, where $Y$ and $Y'$ have the same dual graph. That
     is, $\dT(X,Y) \le \frac{1}{2} \log K'$ and $\psi(Y)= \psi(Y')$. But,
     as mentioned before, the preimage of a point under $\psi$ is a compact
     set with uniform diameter. Hence, $\dT(Y,Y') = O(1)$. By the triangle
     inequality, we have $\dT(X,Y) \le K$, for some uniform $K$. 
     
     In the general situation where $d_S \big( \psi(X), \psi(Y) \big) = n$.
     Let $X = X_0, X_1, \ldots, X_n = Y$ be a sequence of elements in
     \PBers with $d_S \big( \psi(X_{i-1}), \psi(X_i) = 1$ for all
     $i=1,\ldots n$. From above, $d_T (X_{i-1}, X_i) \le K$. Thus, by the
     triangle inequality, we obtain the desired statement of the lemma: 
     \[ d_T(X,Y) \le \sum_i^n d_T(X_{i-1},X_i) \le Kn = K d_S \big(
     \psi(X), \psi(Y) \big). \qedhere\]
  \end{proof}
  
  We conclude:

  \begin{corollary} \label{Cor:WidthAboveGraph}
    
     \[ 
        \diam_T \big(\Bersp\big) \le \diam_T \big(\PBers\big) \prec 
        \diam_S \big( \PGraph \big). 
     \]

  \end{corollary}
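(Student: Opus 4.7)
The corollary is essentially a direct repackaging of \lemref{WidthAboveGraph}, and the plan is to verify the two inequalities separately, both of which are short.

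For the right inequality $\diam_T(\PBers) \prec \diam_S(\PGraph)$, I would apply \lemref{WidthAboveGraph} pointwise and take suprema. Given any two points $X, Y \in \PBers$, the lemma produces a uniform constant $K$ with
\[
   d_T(X,Y) \le K \, d_S\bigl(\psi(X), \psi(Y)\bigr) \le K \, \diam_S(\PGraph),
\]
where the last step uses that $\psi(X), \psi(Y) \in \PGraph$. Taking the supremum over all pairs $X, Y \in \PBers$ turns the pointwise bound into a diameter bound, giving the inequality up to the multiplicative constant $K$, which is exactly what the $\prec$ symbol encodes.

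For the left inequality $\diam_T(\Bersp) \le \diam_T(\PBers)$, I would use the general fact that a quotient by a (finite) group of isometries cannot increase diameters. Recall $\Bersp = \PBers / \Sym_p$, and the symmetric group acts by permuting punctures, which preserves the \Teich metric. Hence for any two classes $[X], [Y] \in \Bersp$, choosing representatives and applying the definition of the induced quotient metric yields
\[
   d_T([X], [Y]) \;=\; \inf_{\sigma \in \Sym_p} d_T(X, \sigma \cdot Y) \;\le\; d_T(X, Y) \;\le\; \diam_T(\PBers),
\]
so taking the supremum over $[X], [Y]$ gives the desired bound.

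There is no real obstacle here; the work has already been done in \lemref{WidthAboveGraph}, and the corollary just records its diameter-level consequence together with the trivial quotient inequality. The only small point to be careful about is the asymmetry of the Lipschitz metric, but since \lemref{WidthAboveGraph} is phrased in terms of the symmetric \Teich metric $d_T$, no issue arises in this corollary.
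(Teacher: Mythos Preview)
Your proposal is correct and matches the paper's intent: the corollary is stated immediately after \lemref{WidthAboveGraph} with only the phrase ``We conclude,'' so the paper treats it as the obvious diameter-level consequence you have spelled out. Your two steps (apply the lemma and take suprema for the right inequality; use that the $\Sym_p$--quotient is distance non-increasing for the left) are exactly the natural justification.
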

  
  \subsection*{From graphs to labeled trees}
  
  \label{Sec:GraphtoTree}
  
  By \corref{WidthAboveGraph}, the problem of bounding the width of
  \PMthick from above can be replaced by the problem of bounding $\diam_S
  \big( \PGraph \big)$ from above. For the latter problem, it will be more
  convenient to cut each graph into a binary tree and label the ends in
  pairs to remember the gluing. We explain how to do this after some
  definitions.

  By a \emph{binary tree} (or a tree for short) we mean a connected graph
  with no loops, so that the valence at each vertex is either one or three.
  A \emph{rooted} tree has a distinguished exterior edge $e_*$. A
  \emph{labeled} tree is a rooted tree where all ends (exterior edges
  except for $e_*$) are labeled with numbers $\{0, 1, 2, \ldots, n \}$,
  where $n$ will be called the \emph{complexity} of the tree. Note that a
  tree of complexity $n$ has $n+2$ exterior edges and $n-1$ interior edges.
  Two labeled trees are said to be equal if there is a homeomorphism
  between them taking root to root preserving the labels. After this
  identification, there are finitely many labeled trees for each fixed
  complexity $n$. Let \Tree be the space of labeled trees of complexity
  $n$. Whitehead moves or simultaneous Whitehead moves for trees are
  defined as before. We retain the notations $d_W$ and $d_S$ for the
  metrics of Whitehead moves and simultaneous Whitehead moves on \Tree,
  respectively. 
  
  We now construct a map $\PGraph \to \Tree$, $n=2g-2+p$. Let $\Gamma \in
  \PGraph$. The graph $\Gamma$ has $p$ exterior edges which are labeled
  (inherit the labeling from the marked vertices). We may identify the
  labeling as $0, \ldots, p-1$. Now arbitrarily pick a spanning tree in
  $\Gamma$. The complement of the spanning tree in $\Gamma$ contains
  exactly $g$ edges. We cut each such edge in half, resulting in a tree $T$
  with $2g$ unlabeled exterior edges. We label these edges arbitrarily from
  $p, \ldots 2g+p-1$ with the only restriction that $p+2k$ is glued to
  $p+2k+1$ in $\Gamma$, for $k=0,\ldots g-1$. Finally, erase the edge with
  the highest label, $2g+p-1$, and make that the root of $T$. The resulting
  tree $T$ is an element in \Tree associated to $\Gamma$. See
  \figref{DualGraph} for an example. 
  
  If the two trees associated to two graphs differ by a simultaneous
  Whitehead move, then the two graphs also differ by one simultaneous
  Whitehead move, hence 
  \begin{equation} \label{Eq:GraphAboveTree}
     \diam_S \big( \Graphp \big) \le \diam_S \big( \PGraph \big) \le
     \diam_S \big( \Tree \big). 
  \end{equation} 
  By \corref{WidthAboveGraph}, we have
  \begin{equation} \label{Eq:WidthAboveTree} 
     \diam_T \big(\Bersp\big) \le \diam_T \big(\PBers\big) \prec \diam_S
     \big( \Tree \big).
  \end{equation}
 
\section{Trees and upper bound on width} 

  \label{Sec:Trees}
  
  In this section, we describe two algorithms for transforming a binary
  tree into a desired shape efficiently using simultaneous Whitehead
  moves. We prove 

  \begin{theorem} \label{Thm:UpperTree}
     
     For any $n$,
     \[ \diam_S \big( \Tree \big) = O \big( \log(n) \big). \]

  \end{theorem}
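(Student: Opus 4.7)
The plan is to fix a canonical target tree $T_\ast \in \Tree$ (for instance a balanced binary tree with the labels $0, 1, \ldots, n$ placed in sorted order at its leaves) and show that every $T \in \Tree$ can be transformed into $T_\ast$ using $O(\log n)$ simultaneous Whitehead moves. Since $d_S$ is a symmetric metric on a finite set, the triangle inequality then yields $\diam_S\big(\Tree\big) = O(\log n)$.

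The overall strategy is divide and conquer. Given $T \in \Tree$, I would describe a \emph{partition step}, consisting of $O(1)$ rounds of simultaneous Whitehead moves, that reshapes $T$ near the root so that the left subtree of the root contains exactly the labels $\{0, 1, \ldots, \lfloor n/2 \rfloor\}$, the right subtree contains the remaining labels, and both subtrees have size roughly $n/2$. One then recursively applies the algorithm to each subtree. Because the edges modified in the left recursive call are disjoint from those modified in the right call, the two subproblems may be executed in parallel as a single simultaneous Whitehead move per round. The recursion tree has depth $O(\log n)$ and each level contributes $O(1)$ parallel rounds, yielding the stated bound.

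The main obstacle is implementing the partition step in $O(1)$ simultaneous rounds. A priori, moving a deeply nested label from the ``wrong side'' to the ``correct side'' of the root requires a chain of Whitehead moves along its path to the root, and there may be $\Theta(n)$ such labels whose rotation paths interfere. I would attack this in two sub-phases. First, a \emph{shape-balancing} sub-phase, in the spirit of the flip algorithm of \cite{bose:SDF} for triangulated polygons, would use $O(1)$ simultaneous rounds to guarantee that the two root subtrees have comparable sizes. Second, a \emph{label-routing} sub-phase would reshuffle labels across the root by scheduling many Whitehead moves simultaneously on disjoint edges. The key combinatorial claim to establish is that the balanced shape produced in the first sub-phase admits enough pairwise disjoint rotation paths between the two sides of the root to realize every required label exchange in a bounded number of parallel rounds.

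If the monolithic partition step proves too rigid, a modular alternative that I would pursue is the second ``algorithm'' alluded to in the theorem: first transform $T$ into the canonical unlabeled shape of $T_\ast$ in $O(\log n)$ simultaneous rounds, adapting \cite{bose:SDF} to our (rooted, non-planar) setting; then, on the fixed canonical shape, sort the labels into their target positions in another $O(\log n)$ rounds. This cleanly separates the shape problem from the label problem and reduces the second half to designing a parallel sorting procedure on a balanced binary tree whose comparator operations correspond to disjoint subtree-swapping Whitehead moves, of which only $O(\log n)$ parallel layers are needed because disjoint subtree swaps at a fixed depth are compatible with one another.
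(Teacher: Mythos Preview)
Your modular alternative---first make the tree short, then sort the labels---is exactly the two-phase structure the paper uses (\propref{HeightTree} followed by \propref{SortTree}). So the outline is sound. The gaps are in the implementation of both your primary plan and the second phase of the alternative.

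\textbf{The $O(1)$-round partition step is not attainable.} Take $T$ to be a caterpillar: a path $e_*,v_1,\ldots,v_n$ with a single leaf hanging off each $v_i$. One child of the root is a leaf, the other carries the remaining $n$ leaves. A simultaneous Whitehead move touches pairwise disjoint edges, so at most one of the edges involved is incident to the root; one round can therefore transfer only a bounded-size subtree across the root, and in a caterpillar every such subtree is a single leaf. Hence no bounded number of rounds produces a balanced split at the root, let alone the correct label split. The same obstruction shows your ``shape-balancing sub-phase in $O(1)$ rounds'' is impossible. The paper spends $O(\log n)$ rounds just to bring the height down (\propref{HeightTree}), and this is necessary: each round changes any height by at most a constant factor. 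Even after height reduction, your partition-then-recurse scheme, with each partition costing on the order of the current height, telescopes to $O(\log^2 n)$ rounds unless the partitions at different depths are pipelined---and that pipelining is precisely the content of the paper's sorting phase, not something one gets for free.

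\textbf{Sorting on a ``fixed canonical shape'' does not work as stated.} First, Whitehead moves are rotations, not subtree swaps; every nontrivial Whitehead move changes the shape, so the shape cannot stay fixed during sorting. Second, if by ``subtree swaps'' you mean exchanging the two children of a vertex, those are free (they are homeomorphisms of the tree, and trees are identified up to label-preserving homeomorphism), but the group they generate on the leaves is an iterated wreath product of $\mathbb{Z}/2$, of order $2^{n-1}$, far smaller than $S_{n+1}$; most label permutations are unreachable this way. So the sorting-network analogy does not go through. The paper's solution is a radix sort: it introduces the notion of an edge being $k$--sorted (the labels in its subtree agree in binary digits $k{+}1,\ldots,d$ and are separated at digit $k$), proves that ``sort moves'' at all currently pre-sorted edges have disjoint support (\lemref{DisjointSupport}), and shows that sortedness propagates one level per round, first from the leaves up to the root and then back down (\propref{SortTree}). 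The shape changes throughout; what is controlled is the \emph{sortedness} of each edge, not the shape.
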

  In view of \eqnref{GraphAboveTree} and \eqnref{WidthAboveTree}, we obtain
  the upper bound of \thmref{IntroDiamGraph} and the upper bound for the
  width of \PMthick. We remark that the lower bound for
  \thmref{IntroDiamGraph} is easy, but we will postpone a proof to
  \secref{Examples} where we discuss the lower bound for the Lipschitz
  width of $\Mpthick$. 

  This section is organized as follows. In \secref{ReducingHeight}, we will
  introduce an algorithm which makes any tree more compact, by reducing its
  height to be of order $\log(n)$ in $O\big(\log(n)\big)$ simultaneous
  Whitehead moves. Then in \secref{FullySortedTree}, we will introduce a
  distinguished element $T_n \in \Tree$, called the \emph{fully-sorted
  tree}. In \secref{Sorting}, we will describe how to sort the labels of a
  compact tree to be fully-sorted in $O\big( \log (n) \big)$ number of
  simultaneous Whitehead moves. 
  
  \subsection{Reducing the height} \label{Sec:ReducingHeight}
    
  Given $T \in \Tree$, the root $e_*$ defines a partial order ``$<$'' on
  the set of edges of $T$ where $e_*$ is the minimal element: Given edges
  $e_1$ and $e_2$, we say $e_2$ is a \emph{descendant} of $e_1$, and write
  $e_1<e_2$, if the path from $e_2$ to $e_*$ contains $e_1$. (We note that,
  in our figures, ``descendants'' are drawn as ``ascendants''.) If $e_1$
  and $e_2$ are adjacent and $e_1 < e_2$, then $e_1$ and $e_2$ are in
  \emph{parent-child} relationship. The maximal elements of this relation
  are called the \emph{ends} of $T$. Given any edge $e$ in $T$, let $T_e$
  be the subtree of $T$ consisting of $e$ and its descendants. We will say
  $T_e$ is \emph{rooted} at $e$ or $e$ is the root of $T_e$. The
  \emph{size} of $T_e$ will be the number of edges of $T_e$. The edge $e$
  defines a partial ordering on the edges of $T_e$ which is the one
  inherited from $T$. The maximal elements of $T_e$ will be called the ends
  of $T_e$, and the labels of $T_e$ will be the labels of the ends of
  $T_e$. We will say the root $e$ of $T_e$ has \emph{height} 1, its
  children have height 2, and inductively define the height of all edges of
  $T_e$. The maximal possible height will be called the \emph{height} of
  $T_e$.  The height of $T$ will be the height of $T_{e_*} = T$. Given a
  subtree $T_e$ of height $h$, we will say $T_e$ is \emph{full} if $T_e$
  has $2^h$ ends.  
  
  For each interior edge $e$, we will label its children $e_l$ (the left)
  and $e_r$ (the right) and we call the trees $T_{e_r}$ and $T_{e_l}$ the
  children subtrees of $T_e$. Similarly, we label the left and right edges
  of $e_l$ and $e_r$ by $e_{ll}$, $e_{lr}$, $e_{rl}$ and $e_{rr}$ and refer
  to the associated subtrees as the grandchildren subtrees. 
  
  \begin{proposition} \label{Prop:HeightTree}
    
     Any tree $T$ can be transformed to have a height $6 \log_2 (n)$ 
     after $O \big( \log(n) \big)$ simultaneous moves. 

  \end{proposition}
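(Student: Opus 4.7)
The plan is to describe an iterative algorithm in which each round consists of a single simultaneous Whitehead move, and to show that $O(\log n)$ rounds suffice to bring the height of $T$ below $6 \log_2(n)$.

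First I would introduce a heavy-light decomposition. For an interior edge $e$ with sibling $e'$, let $s(e)$ denote the size of $T_e$, and call $e$ \emph{heavy} if $s(e) \ge s(e')$, and \emph{light} otherwise. A \emph{heavy path} is a maximal descending chain of heavy edges $e_1 < e_2 < \cdots < e_k$. The key combinatorial observation is that any root-to-leaf path in $T$ contains at most $\log_2(n+2)$ light edges, since each light descent at least halves the size of the subtree below.

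The basic round is the following. For each heavy path $e_1 < \cdots < e_k$ with light siblings $L_1, \ldots, L_k$, I apply a Whitehead move at every even-indexed edge $e_2, e_4, \ldots$, choosing the reopening that promotes the heavy child $e_{2i+1}$ to become a direct child of the vertex above $e_{2i}$ and pairs the two light siblings $L_{2i-1}, L_{2i}$ into a new internal subtree. This halves the length of each heavy path, while increasing the height of each newly-fused light subtree by exactly one. Performing this move simultaneously across every heavy path in $T$ is valid, since the rotated edges $e_{2i}$ within a single heavy path are pairwise non-adjacent, and edges from distinct heavy paths share no vertices.

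Iterating $\lceil \log_2(n) \rceil$ rounds brings every heavy path down to length at most $2$. I would then bound the height of the resulting tree along any root-to-leaf path by summing three contributions: at most $\log_2(n+2)$ light edges, at most a bounded multiple of the same quantity from heavy edges, and at most $O(\log n)$ additional depth from descending into a fused light subtree (whose size is at most $n$, hence of logarithmic height as a balanced binary tree). Careful bookkeeping of the constants is needed to yield the stated bound of $6 \log_2(n)$.

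The main obstacle I anticipate is managing the heights of the fused light subtrees across rounds, since each round can add $1$ to the height of an existing fused subtree. However, these subtrees are built by iterated balanced pairing, so after $r$ rounds each has the structure of a balanced binary tree of height at most $r$, and since the total number of rounds is $O(\log n)$, their heights remain $O(\log n)$. A secondary concern is that the heavy-light decomposition itself changes after each round; this is handled by simply recomputing the decomposition at the start of every round with respect to the current tree.
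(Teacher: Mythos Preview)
Your heavy-path halving idea is natural, but the bookkeeping in the third contribution is where it breaks: the argument as written yields height $O\big((\log n)^2\big)$, not $O(\log n)$.

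The point is that the descent through a ``fused light subtree'' does not happen once, it happens once for \emph{each} heavy path on the root-to-leaf walk. After $\lceil \log_2 n\rceil$ rounds a heavy path $P_j$ of original length $k_j$ is compressed to length $\le 2$, but its light siblings now sit at the leaves of a balanced binary tree of height about $\log_2 k_j$; to continue the walk you must descend those $\log_2 k_j$ levels before you reach the next light edge and the next heavy path. Since the $j$-th heavy path lies in a subtree of size at most $n/2^j$, one has $\log_2 k_j \le \log_2 n - j$, and summing over the at most $\log_2 n$ heavy paths gives
\[
\sum_{j=0}^{\log_2 n} (\log_2 n - j) \;=\; O\big((\log n)^2\big).
\]
Your parenthetical ``whose size is at most $n$, hence of logarithmic height as a balanced binary tree'' conflates the balanced superstructure (which does have height $\le r$) with the subtrees $L_i$ sitting at its leaves, which are not single vertices but full subtrees still carrying their own recursive structure. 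The remark that one ``simply recomputes the decomposition each round'' does not help without a further argument: after one round of pairings a formerly light sibling can become the heavy child, so there is no reason the new heavy paths are shorter than the old ones. (There is also an off-by-one in your description---the Whitehead move at $e_{2i}$ pairs $L_{2i}$ with $L_{2i+1}$, not $L_{2i-1}$ with $L_{2i}$---but that is cosmetic.)

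The paper's proof avoids this stacking by working level-by-level rather than path-by-path. It applies, in a single round, a \emph{balance move} at every edge $e$ of odd height: among the four grandchild subtrees of $e$, if one is strictly largest, rotate the intervening child so that that largest grandchild moves one level closer to the root. For a fixed edge of height $h \ge 6\log_2 n$, at most $\log_2 n$ of the odd-height ancestors can fail to decrease its height (each failure forces the ambient subtree size to at least double), so one round sends $h$ to at most $h - h/2 + 2\log_2 n \le \tfrac{5}{6}h$. This genuine multiplicative contraction is what gives height $6\log_2 n$ after $O(\log n)$ rounds; the heavy-path scheme, lacking such a contraction, does not.
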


  \begin{proof}

     To prove the proposition, it suffices to show that, if the height of
     $T$ is larger than $6 \log_2 (n)$, one can apply one simultaneous
     move to reduce the height by a definite multiplicative factor.
          
     Let $e$ be any edge. We define a special Whitehead move, called the
     \emph{balance move at $e$}. Compare the sizes of the subtrees
     $T_{e_{ll}}$, $T_{e_{lr}}$, $T_{e_{rl}}$, and $T_{e_{rr}}$. If there
     is an absolute maximum among them, (say $T_{e_{ll}}$), we apply one
     Whitehead move to the edge $e_l$ which reduces the height of $e_{ll}$,
     does not change the height of $e_{lr}$ and increases the heights of
     both $e_{rl}$ and $e_{rr}$ (see \figref{BalanceMove}). If there is no
     absolute maximum, we apply no move. 
      
     \begin{figure}[!htp]  
     \begin{center}
     \includegraphics{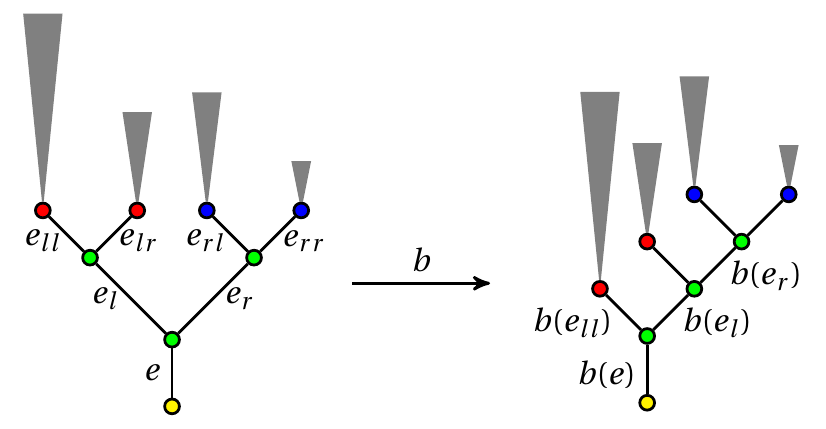}
     \end{center}
     \caption{The map $\stackrel{b}{\longrightarrow}$ is a balance move at
     $e$.} 
     \label{Fig:BalanceMove}
     \end{figure}
  
     Now consider the following simultaneous Whitehead move consisting of a
     balance move for each edge of odd height. Since the support of these
     balance moves are disjoint, their composition defines a simultaneous
     Whitehead move. We show that after one such simultaneous Whitehead
     move, any edge whose height was larger than $6 \log_2(n)$ will have
     its height reduced by at least a multiplicative factor $\frac 56$.
     That is, if the height of $T$ is larger than $6 \log_2(n)$ then it is
     reduced by a factor $\frac 56$. The algorithm stops when the height of
     $T$ is less than  $6 \log_2(n)$
     
     Let $e$ be any edge of height $h_e \ge 6 \log (n)$. Consider the path
     $P$ of length $h_e$  in $T$ connecting $e$ to the root $e_*$. The
     height of $e$ is affected by balance moves associated to edges along
     this path that have odd height. Some will decrease the height of $e$,
     some will decrease it and some will leave it unchanged. Moving down
     from $e$ to $e_*$, let $e_1, e_2, \ldots e_s$ be the set of edges in
     $P$ whose heights are odd and so that, for $j=1, \ldots, s$, the
     balance move associated to $e_j$ does not decrease the height of $e$.
     We bound the number of such edges, that is, we show that most edges
     will decrease the height of $e$. 
      
     By assumption, $T_e \subset T_{e_1}$ has at least $1$ edge. For the
     edge $e_1$ the grandchild tree containing $e$ is not of maximal size.
     Hence, $T_{e_1}$ contains at least two subtrees of size $T_{e}$ plus
     the edge $e_1$ and its children. That is, \[ \size(T_{e_1}) \geq 2
     \size(T_{e}) + 3 \ge 5. \] Similarly, at each $e_j$, the grandchild
     tree containing $e$ is not of maximal size. Hence, $T_{e_j}$ contains
     two grandchildren subtrees of size at least $T_{e_{j-1}}$. That is, \[
     \size (T_{e_j}) \geq 2 \, \size (T_{e_{j-1}}) + 3. \] By induction,
     $\size(T_{e_j}) \geq 2^{j+1}+1$. The total number of edges in $T$,
     $2n+1$, is larger than $\size(T_{e_s})$. Therefore, $$ s \le \log_2
     (n). $$
   
     After applying our simultaneous Whitehead move associate to $e_j$, $j=
     1, \ldots, s$ may cause a height increase for $e$, but a balance move
     at every other edge along $P$ with odd height results in a height
     reduction for $e$. The number of these edges that cause a height
     decrease is at least $h_e/2 -\log_2 (n)$ and the number of edges that
     may increase the height is at most $\log_2 (n)$. Hence, after the
     simultaneous move, the height of $e$ is no more than 
     \[ 
       h_e - \frac{h_e}{2} + 2\log_2 (n) \le \frac{5}{6} h_e. 
     \] 
     
     Since the maximum height of any tree is $n+1$, the number of
     simultaneous moves required to reduce the height of $T$ to a height
     less than $6 \log_2(n)$ is at most $\log_{6/5}(n+1)$. This concludes
     the proof. 
  \end{proof}
  
  \subsection{Fully-sorted tree} \label{Sec:FullySortedTree}
    
  In this section, we inductively construct a tree $T_n \in \Tree$, for
  each $n$. Considering $T_n$ as a base point of $\Tree$, an upper bound
  for the distance between any tree and $T_n$ provides an upper bound for
  $\diam_S\big(\Tree\big)$. 
  
  For $n=0$, $T_0$ is just the root edge $e_*$ with one end labeled $0$.
  Now assume that we have already constructed a tree $T_k \in
  \text{Tree}(k)$ for all $k<n$. Let $m$ be the largest number so that $2^m
  \leq n$ and let $k = n - 2^m \geq 0$. Take the root edge $e_*$. On the
  left, we attach $T_{2^m-1}$ (a full tree of height $m$) and on the right
  we attach a copy of $T_k$. We then change the labels of ends of $T_k$ by
  adding $2^m$ to their values.  The tree $T_n$ is the tree giving the
  binary expansion of numbers $0$ to $n$. See \figref{FullySortedTree} for
  some examples of $T_n$. 

  \begin{figure}[!htp] 
  \begin{center}
  \includegraphics{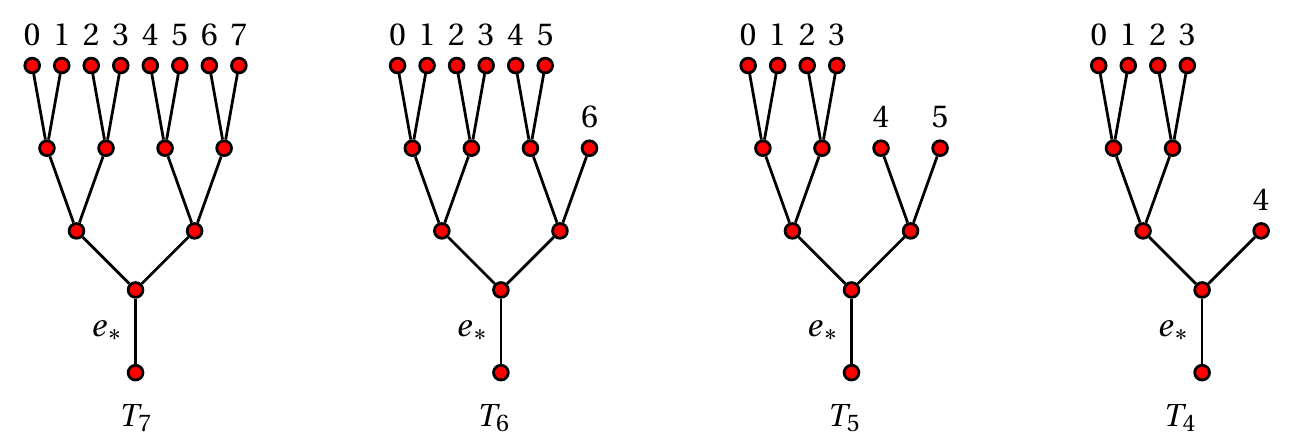}
  \end{center}
  \caption{Examples of fully-sorted trees} 
  \label{Fig:FullySortedTree} 
  \end{figure}

  We can also think of $T_n$ as a \emph{fully-sorted} tree. This is the 
  description of $T_n$ used in \propref{SortTree}. 

  \begin{definition}  \label{Def:Sorted}
   
    Let $d$ be the number of digits needed for the binary expansion of $n$.
    Take $T \in \Tree$. Write every label of $T$ as a $d$--digit number,
    possibly starting with several zeros. For an edge $e \in T$, let $n_e$
    be the number of ends of $T_e$ and $d_e$ be the number of digits in the
    binary expansion of $n_e$. By the $k$--th digit of a label, we always
    mean the $k$--th digit from the right. We  say an edge $e \in T$ is
    \emph{$k$--sorted} if the digits $(k+1)$ to $d$ of all labels of $T_e$
    are the same and either 

    \begin{enumerate} 
    
      \item[(a1)] all labels of $T_e$ have the same $k$--th digit as well
      or
       
      \item[(a2)] all labels of $T_e$ whose $k$--th digit is 0 appear as
      ends of $T_{e_l}$ and all labels of $T_e$ whose $k$--th digit is 1
      appear as ends of  $T_{e_r}$.
            
    \end{enumerate}   
    
  \end{definition}

  We say $T$ is \emph{fully-sorted} if every edge $e$ is $d_e$--sorted.
  Note that if an edge $e$ is $k$--sorted, then it is also $j$--sorted for
  all $j \ge k$. All descendants of $e$ are also at least $k$--sorted. On
  the other hand, if $k < d_e$, then $e$ cannot be $k$--sorted (there would not
  be enough free digits to represent $n_e$ different numbers). 
    
  Here is the second characterization of the base tree $T_n$.  
  
  \begin{lemma} \label{Lem:Tn}
  
    A tree $T \in \Tree$ is fully-sorted if and only if $T=T_n$. 
  
  \end{lemma}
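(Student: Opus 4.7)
The plan is to induct on $n$, using the sortedness condition at the root to force the structure of $T$.

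For the ``$T_n$ is fully-sorted'' direction, the base $n=0$ is immediate. For $n \ge 1$, let $m = \lfloor \log_2 n \rfloor$; by construction the root $e_*$ of $T_n$ has left subtree carrying the labels $\{0, \ldots, 2^m-1\}$ and right subtree carrying $\{2^m, \ldots, n\}$. Since $n_{e_*} = n+1$ yields $d_{e_*} = m+1$, and since every label is strictly less than $2^{m+1}$, the digits above position $m+1$ are identically zero, while the left labels carry $(m+1)$-th digit $0$ and the right labels carry $(m+1)$-th digit $1$; this exhibits $e_*$ as $(m+1)$-sorted in case (a2). I would then invoke the inductive hypothesis on each subtree. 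The routine observation is that for any edge $e'$ internal to, say, the left subtree one has $n_{e'} \le 2^m$ and hence $d_{e'} \le m$, so the digits above position $m$ are uniformly zero on the labels of $T_{e'}$ and $d_{e'}$-sortedness inside the subtree transfers immediately to $d_{e'}$-sortedness inside $T_n$.

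For the converse, assume $T$ is fully-sorted with labels $\{0, \ldots, n\}$ and consider the root. Because $n \ge 2^m$, the labels contain both $0$ and $2^m$, whose $(m+1)$-th digits disagree; hence case (a1) at $k = m+1$ fails, and the $d_{e_*}$-sortedness of $e_*$ can only be achieved through case (a2). This forces the labels of $T_{e_l}$ to be exactly $\{0, \ldots, 2^m-1\}$ and those of $T_{e_r}$ to be $\{2^m, \ldots, n\}$. Shifting the labels of $T_{e_r}$ down by $2^m$ leaves the low-order digits unchanged and zeros out everything above position $m$, so the shifted right subtree is fully-sorted as an element of $\mathrm{Tree}(n-2^m)$; the left subtree is already fully-sorted as an element of $\mathrm{Tree}(2^m-1)$ by the same bookkeeping. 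The inductive hypothesis identifies these with $T_{2^m-1}$ and $T_{n-2^m}$, which when reassembled reproduces the recursive definition of $T_n$.

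The main (minor) obstacle is the bookkeeping about digit positions when passing between $T$ and one of its subtrees, both under restriction and under the label shift used for the right subtree. In each operation the digits that are removed or altered are identically constant on the labels of $T_{e'}$ in question, so the condition ``digits $d_{e'}+1$ to $d$ agree'' passes through cleanly; verifying this amounts to noting that $d_{e'}$ cannot exceed the effective $d$ of the ambient subtree, which follows from the monotonicity $n_{e'} \le n_e$ along descent.
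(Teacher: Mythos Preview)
Your proof is correct and follows essentially the same inductive strategy as the paper: use $d_{e_*}$-sortedness at the root to force the partition of labels into $\{0,\ldots,2^m-1\}$ and $\{2^m,\ldots,n\}$, relabel the subtrees, and recurse. The paper only writes out the ``fully-sorted $\Rightarrow T=T_n$'' direction, treating the converse as evident from the construction; you supply both, and your relabeling of the right subtree by subtracting $2^m$ is just a concrete instance of the paper's ``remove all common leading digits.''
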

  
  \begin{proof}
    
    The statement is clear for $n=0$. Assume $T$ is a fully-sorted tree and
    $n\geq1$. The edge $e_*$ is $d$--sorted, since $d_{e*}=d$. In this case,
    condition $(a2)$ must hold, because the labels of $T$ cannot all start with
    the same number. That is, all the labels at the ends of $T_{(e_*)_l}$
    start with the digit 0 and all the labels at the ends of $T_{(e_*)_r}$
    start with the digit 1.
     
    We now cut $e_*$ out obtaining the two children trees. We modify the labels 
    of the left tree by removing the first digit $0$ from all labels. The labels
    on the right may have several digits in common. We modify the labels by
    removing all these digits. That is, the number of digits are the
    minimum needed to represent the labels. Denote the these modified tree
    simply by $T_l$ and $T_r$. 
    
    We now check that these two trees are still fully-sorted. Consider the
    tree $T_r$ and assume that $s$ digits have been removed in the
    modifications of the labels. For $e \in T_r$, we need to show $e$ is
    $d_e$--sorted in $T_r$. Since the number of ends $n_e$ of $T_e$ does
    not change by cutting out the root $e_*$ from $T$, the edge $e$ was
    $d_e$--sorted in $T$. This means that all digits from $d_e+1$ to $d$ of
    the labels of $T_e$ are the same in $T$ and either condition 1 or 2
    held true for $e$ in $T$. Removing $s$ unnecessary digits from $T_r$
    means, viewing $T_e$ as a subtree of $T_r$, the labels must agree on
    all digits from $d_e+1$ to $d-s$. Furthermore, if conditions 1 or 2
    held true for $e$ in $T$ they would still hold for $e$ in $T_r$.
    Therefore, every edge $e$ of $T_r$ is $d_e$--sorted. The proof for
    $T_l$ is the same $(s=1)$.  
    
     Note that $T_l$ has $2^m$ ends where $m$ is the largest number with
     $n \geq 2^m$. Since $T_l$ is fully-sorted, by induction $T_l= T_{2^m-1}$.
     Similarly, $T_r= T_k$, where $k= n-2^m$, because $T_r$ has $k+1$ ends. 
     That is, $T=T_n$.    
  \end{proof}

  \subsection{Sorting} 
  
  \label{Sec:Sorting}  
  
  We now present the sorting algorithm which will transform any tree $T$ of
  height less than $6 \log_2(n)$ to a fully-sorted tree (which we know has
  to equal $T_n$) in $O\big( \log (n) \big)$ simultaneous Whitehead moves. 
  
  Note that the ends of a tree are alway $1$--sorted. Essentially our
  algorithm is to sort the tree at different digits wherever possible by
  applying a \emph{simultaneous sort move} which we describe below.  

  We say an edge $e$ is \emph{$k$--pre-sorted}, if the following conditions
  hold \begin{itemize}

  \item[(b1)] the children $e_r$ and $e_l$ are $k$--sorted,
  
  \item[(b2)] the digits $(k+1)$ to $d$ of all the labels at the ends of
  $T_e$ are the same, and
  
  \item[(b3)] the edge $e$ is not $k$--sorted. 
  
  \end{itemize}   
  We say an edge is pre-sorted if it is pre-sorted for some value of $k$. 

  To make this well defined for $k=d$ we assume that all edges are always
  $(d+1)$--sorted (one can think of the digit $(d+1)$ is always being $0$).
  Given any $k$--pre-sorted edge $e$, one can apply a \emph{sort move} at
  $e$ to make $e$ $j$--sorted, for some $j\le k$. There are essentially
  three types of such moves depicted in \figref{SortMove}. The first type
  requires three Whitehead moves supported at $e_l$ and $e_r$. The second
  and third type require only one Whitehead move supported at one of $e_l$
  or $e_r$. 
    
  \begin{figure}[!htp]
  \begin{center}
  \includegraphics{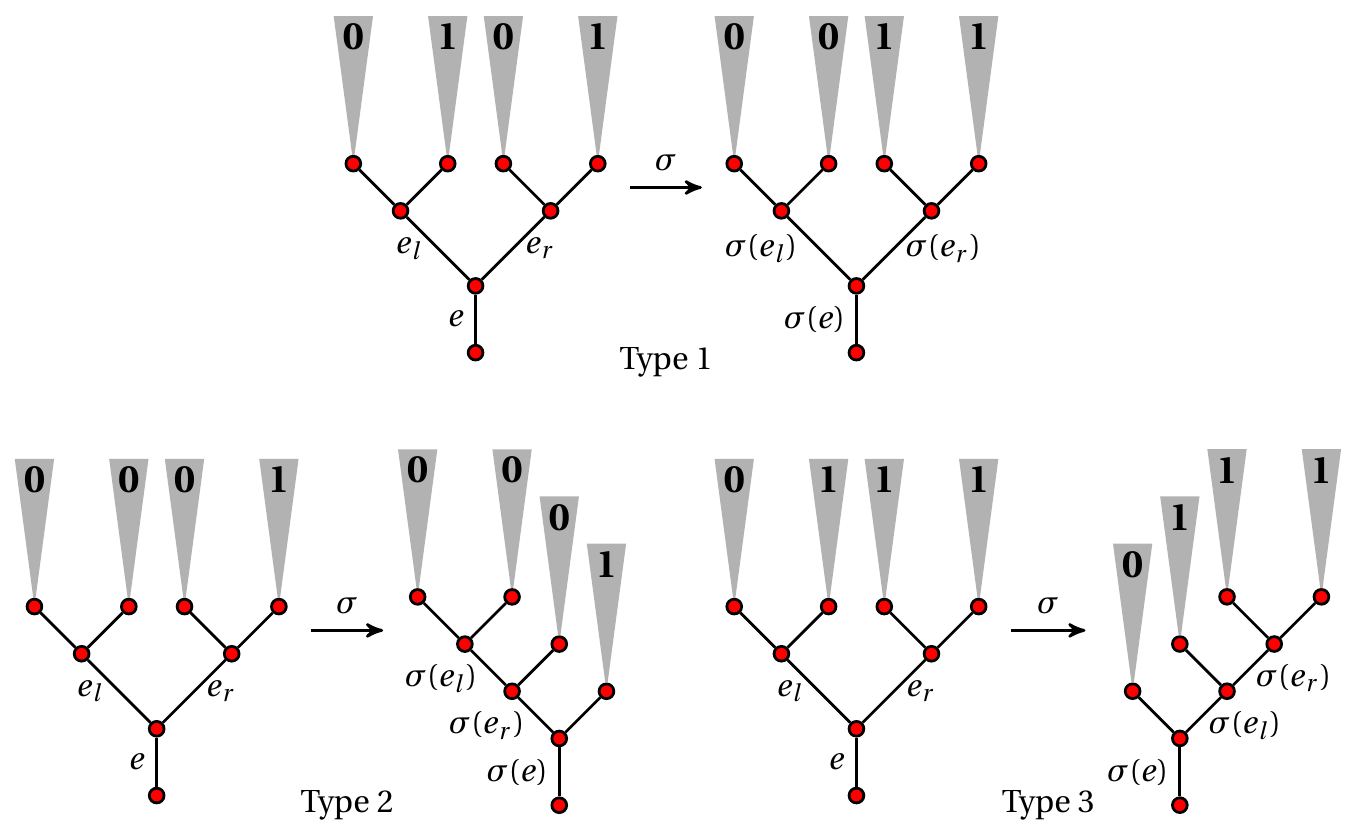}
  \end{center}
  \caption{The map $\stackrel{\sigma}{\longrightarrow}$ represents a sort
  move at $e$. There are three types of sort moves. }
  \label{Fig:SortMove}
  \end{figure}
    
  We say an edge is \emph{pre-sorted} if it is $k$--pre-sorted for some
  $k$. We claim the following statement.
  
  \begin{lemma} \label{Lem:DisjointSupport}
  
  Let $E$ be the collections of pre-sorted edges of $T$. Then the sort moves
  associated to the edges in $E$ have disjoint support.  
  \end{lemma}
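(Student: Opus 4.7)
The plan is to pin down the support $\sigma(e)$ of the sort move at each pre-sorted edge $e$ and then verify disjointness $\sigma(e) \cap \sigma(e') = \emptyset$ (meaning no shared vertex between the relevant edges) by a short case analysis on the relative position of two distinct pre-sorted edges $e, e' \in E$. The main combinatorial fact I will exploit is that a sort move only disturbs the \emph{mixed} children of $e$ at digit $k$, and such mixed children are themselves never pre-sorted.

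First, I would inspect the three types of sort moves in \figref{SortMove} and verify that $\sigma(e) \subseteq \{e_l, e_r\}$; more precisely, $\sigma(e)$ consists of exactly those children of $e$ whose labels are \emph{mixed} at digit $k$, meaning both $0$ and $1$ occur as $k$-th digit in the subtree's labels. Since $e_l, e_r$ are $k$-sorted by (b1) while $e$ itself is not by (b3), at least one child must be mixed; otherwise (a1) or (a2) would already hold for $e$ at digit $k$. A pure child, if present, is untouched by the sort move. The key consequence is that a mixed child cannot itself be pre-sorted: if $e_l$ is mixed at digit $k$ and were $k'$-pre-sorted with $k' \geq k$, then $e_l$ being $k$-sorted via (b1) would already be $k'$-sorted by the monotonicity remark following \defref{Sorted}, contradicting (b3) for $e_l$; while if $k' < k$, condition (b2) for $e_l$ forces the $k$-th digits of all labels in $T_{e_l}$ to agree, contradicting mixedness.

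Next, for distinct $e, e' \in E$, I verify $\sigma(e) \cap \sigma(e') = \emptyset$ case by case on the tree-relationship. When neither of $e, e'$ is an ancestor of the other, their supports lie in disjoint subtrees of $T$ and share no vertices. The subtle case is the parent-child relation, say $e' = e_l$: the previous step forces the pre-sorted $e_l$ to be pure at digit $k$, so $e_l \notin \sigma(e)$ and hence $\sigma(e) \subseteq \{e_r\}$, while $\sigma(e') \subseteq \{(e_l)_l, (e_l)_r\}$. Since $e_r$ and the edges $(e_l)_l, (e_l)_r$ lie on opposite sides of the edge $e_l$, they share no vertex. Deeper ancestral cases are easier: $\sigma(e')$ then lies strictly below a grandchild vertex of $e$ and cannot touch $\{e_l, e_r\}$.

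The main obstacle I anticipate is the first step: verifying from \figref{SortMove} that the type-1 sort move, with its three Whitehead moves, really keeps its support confined to a subset of $\{e_l, e_r\}$ rather than reaching into the grandchildren. Once the support is precisely localized, the remainder is the brief digit-bookkeeping argument sketched above together with a mechanical case check.
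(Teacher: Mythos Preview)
Your proof is correct and follows essentially the same approach as the paper's. Both arguments reduce to showing that if $e$ is $k$--pre-sorted and a child $e_l$ is $j$--pre-sorted, then the $k$--th digits of all labels in $T_{e_l}$ agree (your ``pure'' condition), so the sort move at $e$ avoids $e_l$; the paper deduces $j<k$ from (b1) and (b3) and then invokes (b2), which is exactly your $k'<k$ case, while your $k'\geq k$ case is the contrapositive of the paper's $j<k$ step. Your explicit introduction of ``mixed'' children and the fuller case split on relative positions are organizational choices rather than substantive differences, and your concern about the type-1 support is resolved by the paper's own description that the three Whitehead moves are ``supported at $e_l$ and $e_r$''.
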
  
  
  \begin{proof}
  
     The support of the sorting moves at two edges is aways disjoint if the
     edges are not in a parent-child relationship. Hence, it is enough to
     show that if $e$ and $e_l$  are pre-sorted, then the support of the
     sort move at $e$ is at $e_r$. This is equivalent to showing that, in
     this case, the sort move at $e$ is of Type 2. The support of the sort
     move at $e_l$ is a subset of $\{e_{ll}, e_{lr}\}$, which are disjoint
     from $e_r$. Hence, these two moves do not interfere with one another. 

     Suppose $e$ is $k$--pre-sorted and $e_l$ is $j$--pre-sorted. Condition
     (b1) applied to $e$ implies that, $e_l$ is $k$--sorted. But $e_l$ is
     $j$--pre-sorted. Thus $j < k$. By condition $(s2)$, all digits $j+1\le
     k < d$ of the labels of $T_{e_l}$ must be the same. That is, the sort
     move at $e$ is of type $2$.
  \end{proof} 

  \begin{definition} \label{Def:Ripe}

     We will say a tree $T$ is \emph{ripe} if for any edge $e \in T$ is
     $k$--sorted, then each child of $e$ is either $(k-1)$--pre-sorted or
     $(k-1)$--sorted. Note that if $T$ is ripe, then any subtree of $T$ is
     ripe. We will say an edge $e \in T$ is \emph{ripe} if $T_e$ is ripe. 
  
  \end{definition}
  
  Now let $\overline{E} \subset E$ be the set of pre-sorted edges $e$ so
  that $T_e$ is ripe. A \emph{simultaneous sort moves} is a composition of
  sort moves associated to edges in $\bar E$. A simultaneous sort move is a
  composition of $3$ simultaneous Whitehead moves.  

  We denote by $T \stackrel{\sigma}{\longrightarrow} T'$ to mean $T'$ is
  obtained from $T$ from one simultaneous sort move. There is a natural
  identification of the edges in $T$ with the edges in $T'$ via the map
  $\sigma$. We denote the edge in $T'$ associate to an edge $e \in T$ by
  $\sigma(e)$. 
  
  \begin{lemma} \label{Lem:Ripe}
     
     Suppose $T \stackrel{\sigma}{\longrightarrow} T'$ and let $\sigma(e)$ be
     an edge in $T'$. If the children of $e$ are ripe in $T$ then $\sigma(e)$
     is ripe in $T'$. 
     
  \end{lemma}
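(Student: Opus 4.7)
The plan is to verify the ripeness condition directly on $T'_{\sigma(e)}$. Fix any edge $f \in T'_{\sigma(e)}$ that is $k$--sorted in $T'$, and let $\tilde{f} = \sigma^{-1}(f) \in T_e$. Because each sort move rearranges only the grandchildren of the moved edge (the edge itself and its immediate children stay in place), and because \lemref{DisjointSupport} guarantees the individual sort moves have disjoint supports, the children of $f$ in $T'$ are exactly $\sigma(\tilde{f}_l)$ and $\sigma(\tilde{f}_r)$. The argument then splits according to whether a sort move was applied at $\tilde{f}$.

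If $\tilde{f}\in\overline{E}$, say $\tilde{f}$ is $k_0$--pre-sorted, then the sort move redistributes the labels of $T_{\tilde f}$ so that those with $k_0$--th digit $0$ lie in $T'_{\sigma(\tilde{f}_l)}$ and those with $k_0$--th digit $1$ in $T'_{\sigma(\tilde{f}_r)}$. Inspection of the three sort-move types shows both children of $f$ then become $(k_0-1)$--sorted. Since $\tilde{f}$ is not $k_0$--sorted in $T$, the labels of $T_{\tilde{f}}$ vary at the $k_0$--th digit, so $f$ cannot be $k$--sorted for any $k<k_0$; hence $k\ge k_0$, and $(k_0-1)$--sortedness of the children implies $(k-1)$--sortedness.

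If $\tilde{f}\notin\overline{E}$, no sort move is applied at $\tilde{f}$, and because sort moves strictly inside $T_{\tilde{f}_l}$ or $T_{\tilde{f}_r}$ do not alter the label sets of those subtrees, $\tilde{f}$ is $k$--sorted in $T$ as well. If $\tilde{f}\neq e$, then $\tilde{f}$ lies in $T_{e_l}$ or $T_{e_r}$ and the ripeness hypothesis immediately yields that $\tilde{f}_l,\tilde{f}_r$ are $(k-1)$--sorted or $(k-1)$--pre-sorted in $T$. If $\tilde{f}=e$, I invoke the observation that $k$--sortedness of $e$ (via either (a1) or (a2) at level $k$) forces the labels of $T_{e_l}$ and of $T_{e_r}$ to share all digits $k$ through $d$, making $e_l$ and $e_r$ themselves $k$--sorted in $T$; then ripeness of $T_{e_l}$ applied at $e_l$ (and symmetrically for $e_r$) gives that the grandchildren $e_{ll},e_{lr},e_{rl},e_{rr}$ are $(k-1)$--sorted or $(k-1)$--pre-sorted in $T$.

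It remains to promote these $T$--level conclusions to $T'$. If an edge $c$ is $(k-1)$--sorted in $T$, any sort move at $c$ must occur at some level $j<k-1$ (since $(k-1)$--sortedness rules out $j$--pre-sortedness for $j\ge k-1$), which only refines the sortedness of $\sigma(c)$ to $(k-1)$--sortedness; absent a sort move the sortedness is trivially preserved. If instead $c$ is $(k-1)$--pre-sorted in $T$, then $c\in\overline{E}$ because subtrees of a ripe tree are ripe, so the sort move at $c$ promotes $\sigma(c)$ to $(k-1)$--sortedness in $T'$. Inserted into the subcase $\tilde{f}\neq e$, this completes the argument; in the subcase $\tilde{f}=e$ it shows that both children of $\sigma(e_l)$ are $(k-1)$--sorted in $T'$, and combined with the preserved digit constancy this forces $\sigma(e_l)$ to be $(k-1)$--sorted or $(k-1)$--pre-sorted in $T'$, with the symmetric statement for $\sigma(e_r)$. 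The main obstacle is the $\tilde{f}=e$ subcase, where $T_e$ itself need not be ripe under the hypotheses; it is overcome by the automatic $k$--sortedness of $e_l$ and $e_r$ whenever $e$ is $k$--sorted, which lets ripeness be invoked one level below $e$.
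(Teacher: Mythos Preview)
Your argument rests on the opening claim that for every $f=\sigma(\tilde f)$ in $T'_{\sigma(e)}$ the children of $f$ in $T'$ are exactly $\sigma(\tilde f_l)$ and $\sigma(\tilde f_r)$. This is not true in general. A sort move applied at the \emph{parent} $g$ of $\tilde f$ permutes the grandchildren of $g$, and those grandchildren are precisely the children of $\tilde f$ together with those of its sibling; after such a move the children of $\sigma(\tilde f)$ can be images of edges that were not children of $\tilde f$ in $T$. (The claim already fails at $\tilde f$ itself for sort moves of type~2 or~3: a single Whitehead move at $e_r$ sends $e_l$ to a grandchild of $\sigma(e)$ and promotes $e_{rr}$ to a child, so the children of $\sigma(e)$ are $\sigma(e_r)$ and $\sigma(e_{rr})$, not $\sigma(e_l),\sigma(e_r)$.) \lemref{DisjointSupport} does not rescue this: it only separates the supports of sort moves attached to distinct edges of $\overline E$, and says nothing about how a sort move at $g\in\overline E$ interacts with the subtree structure below a child $\tilde f$ of $g$ that is not itself in $\overline E$.

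The gap surfaces concretely in your case $\tilde f\notin\overline E$. You deduce ``$\tilde f$ is $k$--sorted in $T$ as well'' on the grounds that only sort moves strictly inside $T_{\tilde f_l}$ or $T_{\tilde f_r}$ occur. But if the parent $g$ of $\tilde f$ lies in $\overline E$ (which is perfectly possible: $g$ is ripe whenever $g$ is a descendant of $e$, and $g=e$ is ripe as soon as $e$ is pre-sorted), the sort move at $g$ changes the label set of $T'_{\sigma(\tilde f)}$ relative to $T_{\tilde f}$ and simultaneously changes which edges are the children of $\sigma(\tilde f)$. Your dichotomy ``$\tilde f\in\overline E$'' versus ``$\tilde f\notin\overline E$'' simply does not see this situation. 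The paper's proof isolates it as Cases~4 and~5: one identifies the preimages of the children of $\sigma(e)$ as (certain) grandchildren of $g$, observes that the digit constraints forced by $k$--sortedness of $\sigma(e)$ make those preimages $k$--sorted in $T$, and only then invokes ripeness. To repair your argument you would need to add exactly this third case and carry out essentially the same analysis.
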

   
  \begin{proof}

     Let $k$ be the minimal number such that $\sigma(e)$ is $k$--sorted. We
     need to show that each child of $\sigma(e)$ is either
     $(k-1)$--pre-sorted or $(k-1)$--sorted.  We argue in five cases
     depending on how the local picture around $e$ changes under $\sigma$.
     In each case, the lemma essentially follows from the definition. 
      
     \medskip \noindent {\bf Case 1:} $e$ is pre-sorted and the sorting
     move is of type 1. 
     
     In this case, the children of $\sigma(e)$ are images of the children
     of $e$ and the grandchildren of $e$ are mapped to the grandchildren of
     $\sigma(e)$. Since $\sigma(e)$ is $k$--sorted, the digits $(k+1)$ to
     $d$ of the ends of $T'_{\sigma(e)}$ match and the $k$--th digits are
     as depicted in \figref{SortMove}. This means $e_r$ and $e_l$ are
     $k$--sorted. But $e_r$ and $e_l$ are ripe, hence the grandchildren of
     $e$ are $(k-1)$--sorted or $(k-1)$--pre-sorted. Thus, the
     grandchildren of $\sigma(e)$ are $(k-1)$--sorted. Hence, the children
     of $\sigma(e)$ are either $(k-1)$--sorted or $(k-1)$--pre-sorted (the
     conditions (b1) and (b2) holds but (b3) may or may not hold). That is,
     $\sigma(e)$ is ripe in $T'$.     
            
     \medskip \noindent {\bf Case 2:} $e$ is pre-sorted and the sorting
     move is of type 2 or 3.      
     
     By symmetry, we may assume type 2. In this case, a child of
     $\sigma(e)$ is an image of either $e_r$ or $e_{rr}$. First consider
     $\sigma(e_r)=\sigma(e)_l$. As before, $e_l$ is $k$--sorted and its
     children are at least $(k-1)$--presorted. But since the $k$--th digit
     of labels at the ends of $T_{e_l}$ match, $e_{ll}$ and $e_{lr}$ are in
     fact at l east $(k-1)$--sorted. Hence, $e_l$ is either $(k-1)$--sorted
     or $(k-1)$--presorted and $\sigma(e_l)$ is at least $(k-1)$--sorted.
     (see \figref{SortMove}). 
     
     Note also that, $\sigma(e)_{lr}$ is an image of a grandchild of $e$
     and, as argued in previous case, it is at least $(k-1)$--sorted. Thus,
     the children if $\sigma(e)_l$ are both at least $(k-1)$--sorted and
     hence $\sigma(e)_l$ is either $(k-1)$-pre-sorted or $(k-1)$--sorted. 
     
     The argument is easier for $\sigma(e_{rr})=\sigma(e)_r$ since
     $\sigma(e)_r$ is an image of a grandchild of $e$ and hence it is
     $(k-1)$--sorted. Therefore, $\sigma(e)$ is ripe.
     
     \medskip \noindent {\bf Case 3:} $e$ is not pre-sorted and the
     children of $e$ are mapped to the children of $\sigma(e)$. 
     
     In this case, $e$ is as sorted as $\sigma(e)$. Hence, $e_r$ and $e_l$
     are at least $k$--sorted and, since they are ripe, the grandchildren
     are $(k-1)$--sorted or $(k-1)$--pre-sorted. That is, the children of
     $\sigma(e)$ are either $(k-1)$--pre-sorted or $(k-1)$--sorted. This
     implies that $\sigma(e)$ is ripe. 
     
     \medskip \noindent {\bf Case 4:} $e$ is not pre-sorted but $\sigma$
     contains a sorting move associate to the parent of $e$ of type 1. 
         
     Let $f$ be the parent of $e$. The sorting move swaps the grandchildren
     of $f$. Since $\sigma$ contains a sorting move associated to $f$, all
     descendants of $f$ are ripe. 
    
    Since $\sigma(e)$ is $k$--sorted, the digits $(k+1)$ to $d$ of ends of
    $T'_{\sigma(e)}$ match. That means the pre-image of the children of
    $\sigma(e)$ are $k$--sorted. As before, using ripeness, we have that
    the children of $\sigma(e)$ are $(k-1)$--sorted or $(k-1)$-pre-sorted.
    Hence $\sigma(e)$ is ripe.  
     
     \medskip \noindent {\bf Case 5:} $e$ is not pre-sorted and $\sigma$
     contains a sorting move associate to the parent of $e$ of type 2 or 3. 
     
     Again, by symmetry, we may assume type 2. Let $f$ be the parent of
     $e$. The case $e= f_l$ is already covered in case 3. Assume $e=f_r$.
     From the figure, we have that all the children of $\sigma(e)$ have the
     same $k$--th digits. The proof now follows identically to case 1.  
  \end{proof}   

  \begin{proposition} \label{Prop:SortTree}

     Let $T_n \in \Tree$ be the fully-sorted tree and $T \in \Tree$ be
     any tree of height at most $6 \log_2(n)$. Then  
     \[ 
       d_S(T, T_n) = O\big(\log(n)\big). 
     \]

  \end{proposition}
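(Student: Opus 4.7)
The goal is to show $d_S(T, T_n) = O(\log n)$. Since each simultaneous sort move is the composition of at most $3$ simultaneous Whitehead moves, it suffices to produce a sequence of $O(\log n)$ simultaneous sort moves transforming $T$ into the fully-sorted tree, which by \lemref{Tn} must equal $T_n$.

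The proposed algorithm is to repeatedly apply sort moves at every ripe pre-sorted edge of the current tree, simultaneously. By \lemref{DisjointSupport} the supports are pairwise disjoint, so the composition defines a well-formed simultaneous sort move. By \lemref{Ripe} ripeness propagates under the algorithm: whenever the children of an edge $e$ were ripe before a move, the image of $e$ remains ripe afterward. This lets the ``sortedness frontier'' advance by at least one tree-level per simultaneous move.

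The core inductive claim I would establish is the following: after $i$ simultaneous sort moves, every edge $e$ whose subtree $T_e$ has height at most $i+1$ is $d_e$-sorted. The base case $i=0$ is immediate since ends have $d_e = 1$ and are trivially $1$-sorted. For the inductive step, an edge $e$ with $T_e$ of height $i+2$ has children of subtree-height at most $i+1$, hence finalized by the inductive hypothesis. Using ripeness together with the structure of the three sort-move types depicted in \figref{SortMove}, one verifies that $e$ becomes $d_e$-pre-sorted at this stage and is consequently sorted to level $d_e$ in the very next simultaneous move. Since $T$ has height at most $6\log_2 n$, after $O(\log n)$ rounds every edge meets the inductive conclusion, so $T$ is fully sorted and equal to $T_n$.

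The main obstacle is verifying that a sort move at a pre-sorted edge $e$ brings it all the way to sortedness level $d_e$, rather than merely making it sorted at some intermediate level $k>d_e$. This requires a careful case analysis on the labeling patterns at $e$'s grandchildren, which by induction sit at the root of finalized subtrees carrying consecutive labels; the three sort-move types in \figref{SortMove} are tailored to precisely this situation. A secondary concern is that condition (b2) of pre-sortedness must hold at $e$, meaning that the high-order digits of the labels in $T_e$ must already agree; this is ensured because sort moves performed at $e$'s ancestors during earlier rounds partition labels by their high-order bits, so the requisite top-down progress occurs in parallel with the bottom-up propagation of sortedness.
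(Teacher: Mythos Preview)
Your inductive claim --- that after $i$ simultaneous sort moves every edge $e$ with subtree height at most $i+1$ is $d_e$--sorted --- is false already at the first nontrivial step, and this is a genuine gap rather than a detail to be filled in. Being $d_e$--sorted requires that digits $(d_e+1)$ through $d$ of \emph{all} labels in $T_e$ agree. But in the initial tree the labels sitting in a small subtree $T_e$ are arbitrary: an edge $e$ of subtree height~$2$ could carry, say, the labels $5=0101$ and $10=1010$, which disagree on every digit. No sort move supported at or below $e$ can repair this; the high-order digits only come to agree once ancestors of $e$ have separated labels by those bits. Your final paragraph anticipates the problem but resolves it by asserting that ancestor sort moves happen ``during earlier rounds,'' which directly contradicts your bottom-up induction (ancestors have larger subtree height and are handled \emph{later}). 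The claimed parallel top-down/bottom-up progress is not captured by the stated invariant, and I do not see a single invariant that would make a one-phase argument go through.

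The paper avoids this by running two phases with different targets. First, a bottom-up pass of $h-3$ simultaneous sort moves makes every edge $d$--sorted (the \emph{coarsest} level, which imposes no constraint on higher digits) and ripe; here the invariant is only about $d$--sortedness, so the obstruction above disappears. Second, once the root is $d$--sorted and the whole tree is ripe, the children subtrees of $e_*$ are invariant under all further sort moves, and an induction on $n$ (not on height) shows that at most $d=\lceil\log_2 n\rceil$ further moves suffice to fully sort each of them. The total is $(h-3)+d=O(\log n)$. If you want to salvage your approach, you would need to replace your invariant with something like ``after $i$ moves every edge at height $\geq h-i-1$ is $d$--sorted and ripe,'' and then add the second top-down phase; but at that point you have essentially reproduced the paper's argument.
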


  \begin{proof}

     We will show that $T$ can be transformed to $T_n$ in $O\big( \log(n)
     \big)$ simultaneous sort moves. 

     Let $h$ be the height of $T$. First, we show that $e_*$ will be
     $d$--sorted and every edge is ripe after $(h-3)$ steps. At the
     beginning, every edge is $(d+1)$--sorted and edges at the ends are
     $d$--sorted. In fact, the edges whose children are ends are also
     $d$--sorted after relabeling left and right edges. Hence, every edge
     at height $(h-2)$ is either $d$--sorted or $d$--pre-sorted and ripe.
     After the first step, every edge at height $(h-2)$ or higher is
     $d$--sorted and every edge at height $(h-3)$ is either $d$--sorted or
     $d$--pre-sorted and, by \lemref{Ripe}, ripe. Note that, if $T_e$ is
     not ripe, there are no sorting moves associated edges in the path
     connecting $e$ to $e_*$. That is, this path is preserved identically
     under $\sigma$ and in particular, the height of $e$ does not change.
     Hence, the maximum height of an edge $e$ where $T_e$ is not ripe goes
     down by at least $1$ after every simultaneous sorting move. 
     
     Continuing in this way, we get that after $(h-3)$ steps, every edge at
     height $\big( h-2 - (h-3) \big)=1$ or higher is $d$--sorted and ripe.
     That is, $e_*$ is $d$--sorted and ripe. Let $T^1$ be the resulting
     tree. We have shown that $d_s(T,T^1) \le h-3$.
     
     We now claim that if a tree $T^1 \in \Tree$ has the property that its
     root $e_*$ is $d_{e_*}=d$--sorted and $T^1$ is ripe, then $T^1$ will
     be fully-sorted after at most $d=\big\lceil \log_2(n) \big\rceil$
     simultaneous sorting moves. We will prove this by induction on $n$.
     When $n=1$ there is nothing to prove. Now suppose $n>1$. By
     assumption, $T_1$ is ripe and $e_*$ is $d$--sorted. Therefore, any
     future sorting move will preserve the children subtrees of $e_*$. Let
     $e$ be a child of $e_*$. The subtree $T^1_e$ is also ripe, since it is
     a subtree of $T_1$. After removing all digits which are common to all
     labels of $T^1_e$, $e$ is either $d_e$--sorted or $d_e$--pre-sorted.
     After applying one sorting move $T^1_e
     \stackrel{\sigma}{\longrightarrow} T^{1'}_e$, the root $e$ becomes
     $d_e$--sorted and $T^{1'}_e$ remains ripe. By the induction
     hypothesis, $T^{1'}_e$ can be transformed to $T_{d_e}$ in $d_e \le
     d-1$ simultaneous sorting moves. Therefore, after at most $d$
     simultaneous sorting moves, both subtrees attached to $e_*$ are
     fully-sorted. This exactly means that $T^1$ is fully-sorted after at
     most $d$ sorting moves. 

     We conclude: \[ d_S(T,T_n) \le d_S(T,T^1) + d_S(T^1,T_n) \le (h-3) + d
     = O \big( \log(n) \big). \qedhere\]
  \end{proof}

  This completes the proof of \thmref{UpperTree}. Using
  \eqnref{GraphAboveTree} and \eqnref{WidthAboveTree}, we deduce the
  following respective corollaries.

  \begin{corollary} \label{Cor:UpperGraph}
    
    \[ \diam_S \big( \Graphp \big) \le \diam_S \big( \PGraph \big) = O
    \big(\log(g+p) \big).\]

  \end{corollary}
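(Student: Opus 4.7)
The plan is to assemble this corollary directly from the pieces already assembled in the section, rather than do any new combinatorial work. The main identity to invoke is \eqnref{GraphAboveTree}, which records the chain
\[
   \diam_S \big( \Graphp \big) \le \diam_S \big( \PGraph \big) \le \diam_S \big( \Tree \big),
\]
with $n = 2g-2+p$. The first inequality is a formal consequence of the fact that $\Graphp = \PGraph / \Sym_p$, so the quotient map is $1$--Lipschitz in the simultaneous Whitehead metric; the second inequality was established in \secref{GraphtoTree} using the map sending a graph, together with a choice of spanning tree and an arbitrary labeling of the cut edges, to a labeled binary tree, and observing that a simultaneous Whitehead move on trees descends to a simultaneous Whitehead move on graphs.

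Given this chain, the remaining content is the upper bound on $\diam_S(\Tree)$ coming from \thmref{UpperTree}. That theorem, which was just proved by combining \propref{HeightTree} (reducing any tree to one of height $6\log_2(n)$ in $O(\log n)$ simultaneous moves) and \propref{SortTree} (sorting a compact tree to the fully-sorted tree $T_n$ in $O(\log n)$ moves), gives $\diam_S(\Tree) = O(\log n)$.

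I would therefore simply write: by \eqnref{GraphAboveTree} and \thmref{UpperTree} applied with $n = 2g-2+p$,
\[
   \diam_S \big( \Graphp \big) \le \diam_S \big( \PGraph \big) \le \diam_S \big( \Tree \big) = O\big(\log(2g-2+p)\big).
\]
Since $\log(2g-2+p) = O(\log(g+p))$ (the two quantities differ only by a bounded multiplicative constant once $2g-2+p \ge 1$, which is our standing hyperbolicity assumption), this yields the desired bound. There is no real obstacle here — the corollary is a bookkeeping statement, and the only thing to be careful about is that the estimate from \thmref{UpperTree} is in terms of $n = 2g-2+p$ rather than $g+p$, but these are comparable up to constants, so the asymptotic notation absorbs the difference.
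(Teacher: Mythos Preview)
Your proposal is correct and matches the paper's approach exactly: the paper simply states that the corollary follows from \eqnref{GraphAboveTree} and \thmref{UpperTree}, which is precisely what you do (with the additional care of noting that $n=2g-2+p$ and $g+p$ are comparable).
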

  
  \begin{corollary} \label{Cor:UpperWidth}
    
    \[ \diam_T \big( \Bersp \big) \le \diam_T \big( \Bers \big) = O
    \big(\log(g+p) \big).\]

  \end{corollary}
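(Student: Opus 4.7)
The plan is to assemble this from the machinery already developed in the section. By \thmref{UpperTree}, we have $\diam_S\big(\Tree\big) = O(\log n)$, and by \eqnref{WidthAboveTree} (which comes from combining \corref{WidthAboveGraph} with the embedding $\PGraph \hookrightarrow \Tree$ of \secref{GraphtoTree}) we have $\diam_T\big(\PBers\big) \prec \diam_S\big(\Tree\big)$. So the first step is simply to chain these two facts with $n = 2g-2+p$ to conclude
\[
\diam_T\big(\PBers\big) \;\prec\; \diam_S\big(\Tree\big) \;=\; O\big(\log(2g-2+p)\big) \;=\; O\big(\log(g+p)\big),
\]
where in the last equality we used that $2g-2+p \le 2(g+p)$ and $\log(2x)\asymp \log(x)$ for large $x$ (and the statement is vacuous when $g+p$ is bounded since $\PBers$ is then of bounded diameter by Mumford compactness).

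For the inequality $\diam_T\big(\Bersp\big) \le \diam_T\big(\PBers\big)$, the point is just that $\Bersp = \PBers/\Sym_p$ is a quotient by an isometric action of a finite group: permuting the labels of the punctures preserves the \Teich metric, so the induced metric on the quotient satisfies $d_T\big([X],[Y]\big) \le d_T(X,Y)$ for any lifts. Taking sup over pairs $[X],[Y]$ yields the first inequality of the corollary.

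I expect no real obstacle: every ingredient is already in place. The only thing to be mildly careful about is that \eqnref{WidthAboveTree} as stated produces a constant of the form $K$ coming from \lemref{WidthAboveGraph} (quasi-conformal distortion in $4$--holed spheres of type varying with the number of punctures, bounded uniformly by the maximum $K'$ of finitely many $K_a$'s), so the implicit constant in $O(\log(g+p))$ is genuinely uniform in $g$ and $p$. With that checked, the two observations above assemble into the statement.
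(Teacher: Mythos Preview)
Your proof is correct and matches the paper's approach exactly: the paper gives no separate proof for this corollary, simply stating that it follows from \eqnref{WidthAboveTree} together with \thmref{UpperTree}, which is precisely the chain you wrote out. Your added remarks (handling $n=2g-2+p$ versus $g+p$, the quotient-by-isometries argument for the first inequality, and the uniformity of the constant from \lemref{WidthAboveGraph}) are just the natural details behind that one-line deduction.
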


\section{Application to the moduli space of metric graphs} 

  \label{Sec:OuterSpace}
  
  We now give an application of the algorithms in \secref{Trees} to the
  moduli space of metric graphs. Our goal is to prove the upper bound of
  \thmref{IntroDiamOuter} of the introduction. The lower bound is worked
  out in the next section.

  \subsection{Lipschitz diameter of moduli space of metric graphs}

  \label{sec:DiamOuter}

    Let $R_n$ be a wedge of $n$ circles. Let \X be the space of isometry
    classes of metric graphs $G$ with the following properties:

    \begin{itemize}

       \item $G$ is homotopy equivalent to $R_n$.

       \item The valence of each vertex of $G$ is at least $3$.

       \item The sums of lengths of edges or the \emph{volume} of $G$ is
       $n$. 
    
    \end{itemize}
  
    We call \X the \emph{moduli space of metric graphs}. It is also
    naturally the quotient of \emph{Outer Space} by the group of outer
    automorphisms of $\F$ (see \cite{vogtmann:OS}). We equip \X with the
    Lipschitz metric: for any two graphs $G$ and $H$, define 
    \[ 
      \dL(G,H) = \min_f \left\{ \log L(f) \right\}, 
    \] 
    where $f$ is a $L(f)$--Lipschitz map from $G$ to $H$. The thick part of
    \X is the subset \Xthick containing those graphs with no loop shorter
    than \ep. We will show:

    \begin{theorem} \label{Thm:UpperDiamOuter}

       \[ \diam_L \big( \Xthick \big) =  O \left( \log \left(
       \frac{n}{\ep} \right) \right). \]

    \end{theorem}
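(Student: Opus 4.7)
The plan is to mimic the surface proof via a width-plus-height decomposition of the Lipschitz diameter. Let $\calY_n \subset \Xthick$ be the subset of trivalent metric graphs of rank $n$ all of whose edges have the common length $\ell_0 := n/E$, where $E = 3(n-1)$. I will show (W) the width bound $\diam_L(\calY_n) = O(\log n)$, and (H) the height bound that every $G \in \Xthick$ lies within symmetric Lipschitz distance $O(\log(n/\ep))$ of some $G^* \in \calY_n$. The theorem then follows from the triangle inequality.

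For (W), each element of $\calY_n$ has an underlying combinatorial trivalent graph of rank $n$. A simultaneous Whitehead move on such a graph lifts to a bi-Lipschitz homeomorphism between the corresponding metric graphs in $\calY_n$ with constant independent of $n$, because each elementary Whitehead move is locally supported on a theta region whose geometry is fixed (all edges have length $\ell_0$); this is the direct analog of \lemref{WidthAboveGraph}. Applying the spanning-tree construction of \secref{GraphtoTree} (with no punctures, and rank $n$ playing the role of $g$), a rank-$n$ graph embeds distance-increasingly into a labeled tree of complexity $2n-1$. Invoking \thmref{UpperTree}, any two elements of $\calY_n$ are then connected by $O(\log n)$ simultaneous Whitehead moves, proving (W).

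For (H), given $G \in \Xthick$, I use the loop-stretch formula of Francaviglia--Martino, $\dL(X,Y) = \sup_\alpha \log\bigl(\ell_Y(\alpha)/\ell_X(\alpha)\bigr)$, where by White's theorem the supremum is attained on a candidate loop (an embedded loop, figure-eight, or barbell) in the source graph, whose combinatorial length is $O(n)$. Set $\delta := \ep/(3n)$. Modify $G$ by padding every edge of length less than $\delta$ up to length $\delta$, and iteratively splitting every vertex of valence greater than three into two vertices joined by a new edge of length $\delta$. For any candidate $\alpha$ of combinatorial length $O(n)$, the length-change from $G$ to the modified graph $G_1$ is at most $O(n\delta) = O(\ep) \le \ell_G(\alpha)$, yielding symmetric Lipschitz distance $\dL(G, G_1), \dL(G_1, G) = O(1)$. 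Now let $G^* \in \calY_n$ share the combinatorial type of $G_1$ with every edge rescaled to $\ell_0$; since every edge of $G_1$ has length in $[\delta, n]$, this rescaling is bi-Lipschitz with constant at most $O(n/\ep)$, giving symmetric Lipschitz distance $O(\log(n/\ep))$. Adding up proves (H).

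The main obstacle will be confirming the uniform bi-Lipschitz constant claim in (W) for the local theta-region modification; this requires an explicit construction or a compactness argument, and should closely parallel the proof of \lemref{WidthAboveGraph}. A secondary concern is that the padding and vertex-splitting operations in (H) slightly increase total volume (by at most $E\delta = O(\ep)$), so a final uniform rescaling back to volume $n$ is needed; this is an $O(1)$ bi-Lipschitz adjustment and does not affect the asymptotic bound.
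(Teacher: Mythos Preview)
Your argument is correct, but it takes a genuinely different route from the paper's.  Both proofs share the core ingredient: once you are in the locus of trivalent graphs with all edges of equal length (your $\calY_n$), $O(\log n)$ simultaneous Whitehead moves suffice, each realized by a uniformly bi-Lipschitz map.  (The uniform-constant claim you flag as the main obstacle is exactly what the paper asserts and uses in its fourth lemma of \secref{OuterSpace}, so you may simply cite it.)  The difference lies in how an arbitrary $G\in\Xthick$ is connected to that locus.

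The paper does not use a symmetric width/height decomposition.  It builds, for any pair $G,H$, a single explicit chain of Lipschitz maps $G\to R_n\to G'\to H'\to H$: collapse a shortest spanning tree of $G$ onto the rose $R_n$ (this is where the factor $n/\ep$ enters, via the lower bound $\ep/n$ on non-tree edge lengths), then fold-and-stretch $R_n$ into a fixed trivalent $G'$ in $\lceil\log_2 n\rceil$ steps, run the Whitehead algorithm from $G'$ to $H'$, and finally collapse auxiliary trees to reach $H$.  No appeal to the Francaviglia--Martino candidate formula is made; every bound comes from an explicit map.  Your approach instead stays close to $G$: pad short edges, split high-valence vertices, and rescale edge lengths.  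Since there is no honest map $G\to G_1$ (a point cannot map onto a tree), you invoke White's theorem to bound $\dL(G,G_1)$ via candidates; your estimate that the total length added is $O(n\delta)=O(\ep)$ holds because a candidate visits each vertex at most twice and the total number of new edges inserted is $\sum_v(k_v-3)\le 2n-3$.  Your argument is more parallel to the surface case and more modular; the paper's is more elementary and self-contained.  One small correction: the labeled-tree complexity is $2n-2$, not $2n-1$ (cf.\ the last lemma in \secref{OuterSpace}).
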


    Let $G$ and $H$ be two graphs in \Xthick. We will construct a map $G
    \to H$ in four steps. The idea is to interpolate $G$ and $H$ by two
    trivalent graphs, $G'$ and $H'$, on which we can apply the algorithms
    of the previous section. The reader may wish to look at the example in
    \figref{ExampleDiamOuter}. 
    
    \begin{figure}[htp!]
    \begin{center}
    \includegraphics{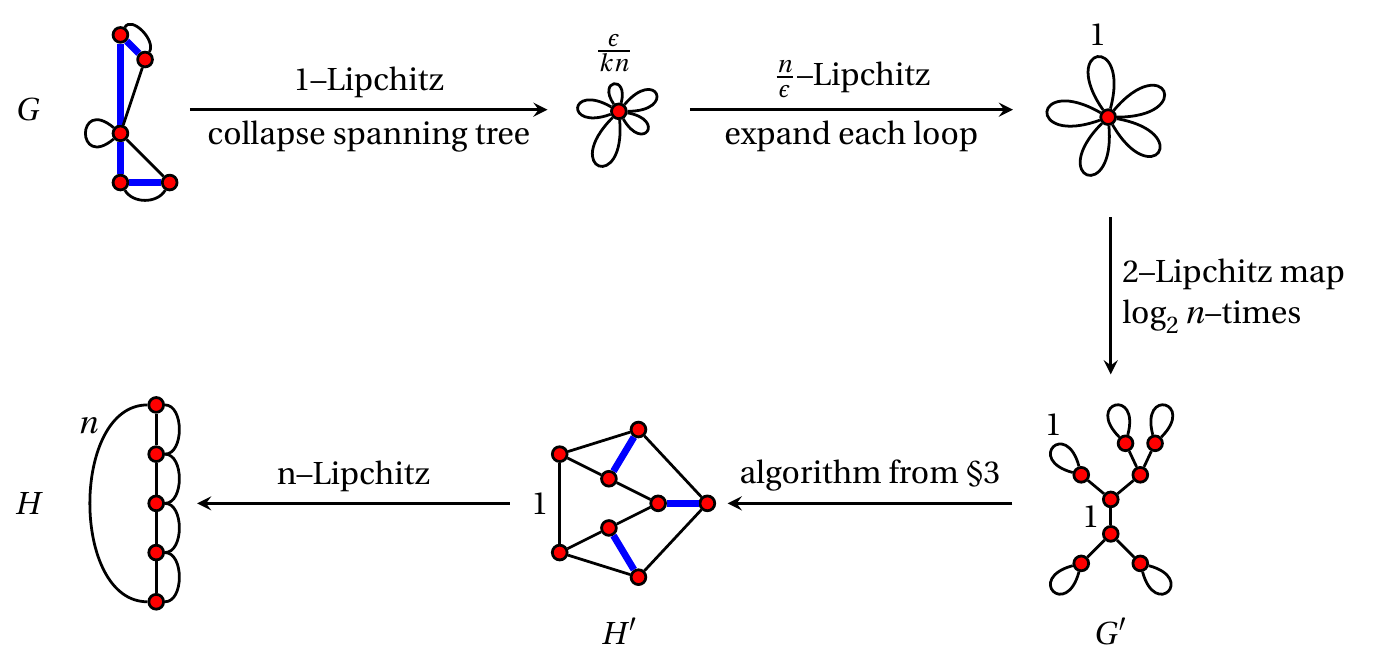}
    \caption{An example in $\calX^\ep_5$.}
    \label{Fig:ExampleDiamOuter}
    \end{center}
    \end{figure}

    We equip $R_n$ with a metric by assigning length $1$ to each circle.

    \begin{lemma}

       The graph $G$ can be mapped to $R_n$ by a $O \left( \frac{n}{\ep}
       \right)$--Lipschitz maps.  

    \end{lemma}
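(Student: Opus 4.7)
The plan is to construct a specific map $f\colon G \to R_n$ that collapses a carefully chosen spanning tree of $G$ to the wedge point of $R_n$ and wraps each remaining edge once around a separate circle of $R_n$. First I would pick a \emph{minimum-weight} spanning tree $T \subset G$, where each edge is weighted by its length. The complement $G \setminus T$ consists of exactly $n$ open edges $e_1,\dots,e_n$, which I would put in bijection with the circles $c_1,\dots,c_n$ of $R_n$.

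The key point is a lower bound on $\ell(e_i)$. By the standard exchange property of a minimum spanning tree, $e_i$ must be the longest edge on the unique cycle $\gamma_i$ it forms together with $T$; otherwise, swapping $e_i$ for a longer tree edge on $\gamma_i$ would produce a spanning tree of strictly smaller total weight. Since $G$ has rank $n$ and valence at least three at every vertex, the Euler-characteristic identity $V - E = 1 - n$ combined with $3V \le 2E$ gives $E = O(n)$, and in particular $\gamma_i$ contains $O(n)$ edges. Because $\gamma_i$ is an essential loop of $G$, the thickness assumption yields $\ell(\gamma_i) \ge \ep$. The longest-edge property then gives
\[
  (\text{number of edges in } \gamma_i) \cdot \ell(e_i) \;\ge\; \ell(\gamma_i) \;\ge\; \ep,
\]
so $\ell(e_i) = \Omega(\ep / n)$.

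I would then define $f$ by sending every point of $T$ to the wedge point of $R_n$ and mapping each $e_i$ onto $c_i$ by an affine parametrization of constant speed $1/\ell(e_i)$. Both endpoints of $e_i$ lie in $T$ and map to the wedge point, so $f$ is continuous, and moreover a homotopy equivalence sending the natural basis of $\pi_1(G)$ coming from $T$ to the $n$ loops of $R_n$. The Lipschitz constant of $f$ is $0$ on $T$ and equals $1/\ell(e_i) = O(n/\ep)$ on each $e_i$, so taking the maximum, $f$ is $O(n/\ep)$-Lipschitz. The only real work is the minimum-spanning-tree lower bound on $\ell(e_i)$; once that is in hand, the rest is a routine check.
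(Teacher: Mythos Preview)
Your argument is correct and is essentially the same as the paper's: both take a minimum-weight spanning tree, use the cycle/exchange property to conclude each non-tree edge is the longest on its fundamental cycle, bound the number of edges on that cycle by $O(n)$ via the valence condition, and invoke the $\ep$--thickness to get $\ell(e_i)=\Omega(\ep/n)$ before collapsing $T$ and mapping each $e_i$ linearly to a circle. Your write-up is slightly more explicit (the Euler-characteristic edge count, the homotopy-equivalence remark), but there is no substantive difference.
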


    \begin{proof}
       
       Let $T$ be the shortest spanning tree of $G$. Since $G \in \Xthick$,
       every edge $e$ in the complement of $T$ has length at least
       $\frac{\ep}{kn}$ for some universal $k$. To see this, consider the
       unique loop consisting of $e$ and an embedded path in $T$. Each edge
       in the loop cannot be longer than $e$ since $T$ is the shortest
       spanning tree. Since there are at most $O(n)$ edges in the loop and
       the total length of the loop is at least \ep, this gives the lower
       bound on the length of $e$. Now map $T$ to the vertex of $R_n$ and
       each edge in the complement of $T$ to a circle of $R_n$ via a linear
       map. This map is at most $\frac{kn}{\ep}$--Lipschitz. 
    \end{proof}

    \begin{lemma}

       $R_n$ can be mapped to a trivalent graph $G'$ with all edge lengths
       1 via a composition of $\left\lceil \log_2 n \right \rceil$
       $2$--Lipschitz maps.

    \end{lemma}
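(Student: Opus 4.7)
The plan is to interpolate between $R_n$ and a conveniently chosen trivalent graph $G'$ by a sequence of rescaled copies of $G'$, so that every consecutive map is $2$-Lipschitz and the total composition has length $\lceil \log_2 n \rceil$.

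For $G'$, I would take a balanced trivalent tree $T$ with $n$ leaves (so $|V(T)|=2n-2$ and $|E(T)|=2n-3$, with depth at most $k := \lceil \log_2 n \rceil$ from an optimally chosen root) and attach a loop of length $1$ at each leaf; giving every tree edge length $1$ as well makes $G'$ a trivalent rank-$n$ graph with $3n-3$ unit-length edges. I would then let $G_i$, for $1 \le i \le k$, be combinatorially identical to $G'$ but with every edge rescaled to length $2^{i-k}$, so that $G_k = G'$ and $G_1$ has all edges of length $2^{1-k}$.

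The maps $G_i \to G_{i+1}$ for $1 \le i \le k-1$ simply double every edge length, and hence are visibly $2$-Lipschitz. The essential step is the first map $f \colon R_n \to G_1$: send the wedge point of $R_n$ to the root of $T$ inside $G_1$, and parametrize each petal of $R_n$ (of length $1$) at constant speed onto the cycle in $G_1$ that runs from the root along the tree to the corresponding leaf, once around the loop at that leaf, and back. Each such cycle has length at most $(2k+1)\cdot 2^{1-k}$, so $f$ is Lipschitz with constant at most $(2k+1)\cdot 2^{1-k}$, which is $\le 2$ as soon as $2k+1 \le 2^k$, i.e.\ whenever $k \ge 3$. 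Since the image cycles form a free basis of $\pi_1(G_1)$, the map $f$ is a homotopy equivalence.

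The main obstacle is the two exceptional values $n = 2$ and $n = 4$, where $k \in \{1,2\}$ and the inequality $2k+1 \le 2^k$ fails; for these I would change the target. When $n=2$, the theta graph admits a direct $2$-Lipschitz map from $R_2$, providing the single required map. When $n=4$, I would use $K_{3,3}$ with intermediate edge length $\tfrac12$: since $K_{3,3}$ has a cycle basis of $4$-cycles, both the map $R_4 \to K_{3,3}^{(1/2)}$ and the edge-doubling map $K_{3,3}^{(1/2)} \to K_{3,3}$ are $2$-Lipschitz, yielding the required $2 = \lceil \log_2 4 \rceil$ maps.
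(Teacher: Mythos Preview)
Your argument is correct and takes a genuinely different route from the paper. The paper builds $G'$ incrementally by an iterative fold--and--stretch procedure: at each stage it partitions the loops at every current vertex into two halves, folds each half together along initial segments (a $1$--Lipschitz identification) to create a new tree edge, and then stretches the resulting half-length loops back to length $1$ (a $2$--Lipschitz map); after $\lceil \log_2 n\rceil$ stages the trivalent tree has been fully grown. You instead fix the target $G'$ at the outset and interpolate by global rescalings $G_i$, so that all but the first map are pure edge-doublings and the only substantive step is the initial $R_n\to G_1$. The paper's approach is more uniform in $n$, while yours is conceptually cleaner at the cost of handling low-complexity cases separately.

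One small oversight: you list only $n=2$ and $n=4$ as the exceptional values with $k\in\{1,2\}$, but $n=3$ also has $k=2$, and your stated bound $(2k+1)\cdot 2^{1-k}=\tfrac52$ does not cover it. In fact $n=3$ causes no trouble --- the unique trivalent tree with three leaves has a degree-$3$ center at depth $1$ from every leaf, so the basis cycles in $G_1$ have length $3\cdot\tfrac12=\tfrac32\le 2$ --- but to see this you must use the actual depth $d$ in place of $k$ in the Lipschitz estimate. Writing the bound as $(2d+1)\cdot 2^{1-k}$ and noting that $d=1$ suffices for $n=3$ closes the gap.
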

    
    \begin{proof}
       
       Divide the circles of $R_n$ into two sets with roughly $n/2$ circles
       each. For each circle, mark off two segments of length $1/4$
       starting from the vertex of $R_n$. For each set, fold all the
       circles together along the marked segments. Note that folding is a
       $1$--Lipschitz map. The resulting graph has an edge $e$ of length
       $1$, and each endpoint of $e$ is attached to roughly $n/2$ loops of
       length $1/2$. Now stretch each loop to have length $1$ by a
       $2$--Lipschitz map and proceed inductively. At each endpoint of $e$,
       divide the circles into two sets of roughly $n/4$ circles each. Then
       fold and stretch. After $\left\lceil \log_2 n \right\rceil$ number
       of steps, we obtain a trivalent graph $G'$ with all edges lengths
       $1$. The composition map $R_n \to G'$ has Lipschitz constant at most
       $2^{\left\lceil \log_2 n \right\rceil} = O(n)$. 
    \end{proof}

    \begin{lemma}

       There is a trivalent graph $H'$ with all edge lengths $1$ that can
       be mapped to $H$ via a $n$--Lipschitz map.     
    
    \end{lemma}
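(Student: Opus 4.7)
The plan is to obtain $H'$ by resolving each high-valence vertex of $H$ into a tree of trivalent vertices and then declaring every resulting edge to have length $1$. For each vertex $v$ of $H$ of valence $k_v \geq 4$, I would replace $v$ by any binary tree $T_v$ having $k_v - 2$ interior trivalent vertices and $k_v - 3$ interior edges, distributing the $k_v$ edges of $H$ formerly incident to $v$ among the interior vertices of $T_v$ so that each such vertex becomes trivalent (for $k_v = 4$, $T_v$ is a single edge whose two endpoints each absorb two of the original edges; for larger $k_v$ one may use a path of $k_v-2$ vertices). Call the resulting combinatorial graph $\tilde H$. By construction $\tilde H$ is trivalent, has the same rank as $H$, and admits a natural collapse map $\pi \from \tilde H \to H$ that contracts each $T_v$ to the point $v$ and is the identity on the edges inherited from $H$.

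Now define $H'$ to be $\tilde H$ equipped with the metric that assigns length $1$ to every edge, so that $H'$ is by construction a trivalent graph with all edge lengths $1$. I then define $f \from H' \to H$ by sending each new edge (lying inside some blowup tree $T_v$) constantly to the point $v$, and by sending each original edge, of length $1$ in $H'$, linearly onto the corresponding edge of length $\ell_e$ in $H$. The two prescriptions agree at shared endpoints, so $f$ is a well-defined continuous map. The Lipschitz constant of $f$ restricted to an edge of $H'$ is either $0$ (on the new edges) or $\ell_e$ (on the original edges). Since the edge lengths of $H$ sum to the volume $n$, every original edge satisfies $\ell_e \le n$, and a routine shortest-path argument upgrades the per-edge bound to the global bound $L(f) \le n$.

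There is essentially no obstacle to this step: the only content is the trivial observation that a single edge in a volume-$n$ metric graph has length at most $n$, together with the standard combinatorial fact that any finite-valence vertex can be blown up into a trivalent tree without changing the homotopy type. All the real difficulty in \thmref{UpperDiamOuter} is already absorbed into the previous lemmas and into the map $G' \to H'$ (which, like the surface case, is handled via the simultaneous Whitehead move algorithm of \secref{Trees}).
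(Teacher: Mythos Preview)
Your proposal is correct and follows essentially the same approach as the paper: blow up each high-valence vertex of $H$ into a small trivalent tree, give every edge of the resulting graph length~$1$, and map back to $H$ by collapsing the inserted edges and stretching each original edge linearly, using that any single edge of the volume-$n$ graph $H$ has length at most $n$. The paper phrases the blowup slightly differently (inserting a binary tree $t_b$ with $b$ exterior edges into a deleted neighborhood of $v$ and then erasing the resulting valence-$2$ vertices), but this is the same construction as yours.
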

    
    \begin{proof}
       
       For each constant $b$, choose a binary tree $t_b$ with $b$ exterior
       edges. Let $v$ be a vertex of $H$ of valence $b > 3$. Remove a small
       neighborhood of $v$ in $H$ which does not contain any other vertex
       of $H$, and glue the endpoints to the endpoints of $t_b$ in an
       arbitrary way. Now erase the vertices of valence 2 to obtain a
       trivalent graph $H'$. Equip $H'$ with the metric so that each edge
       has length $1$. There is a natural map from $H'$ to $H$ obtained by
       collapsing the edges of $H'$ that are the image of the interior
       edges of $t_b$, and then rescaling the remaining edges of $H'$.
       Collapsing is a $1$--Lipschitz map, and since edges in $H'$ have
       length $1$ and edges in $H$ cannot be longer than $n$, this map is
       $n$--Lipschitz. 
    \end{proof}

    \begin{lemma}
       
       $G'$ can be sent to $H'$ via a composition of $O \big( \log(n)
       \big)$ $L$--Lipschitz maps, where $L$ is a uniform constant.
    
    \end{lemma}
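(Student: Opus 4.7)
The plan is to reduce the statement to the combinatorial diameter result of \secref{Trees}. Both $G'$ and $H'$ are trivalent graphs of rank $n$ with every edge of length $1$, so as combinatorial objects they are elements of the unlabeled, unpunctured space $\text{Graph}(n)$. The tree-sorting algorithm of \secref{Trees} (with $p=0$ in the reasoning leading to \corref{UpperGraph}) provides a sequence of simultaneous Whitehead moves
\[
  G' = \Gamma_0,\, \Gamma_1,\, \ldots,\, \Gamma_k = H'
\]
with $k = O(\log n)$ connecting them combinatorially. It therefore suffices to realize each single simultaneous Whitehead move $\Gamma_{i-1} \to \Gamma_i$ by an $L$--Lipschitz map for some uniform constant $L$, and then compose.

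To realize a simultaneous Whitehead move by a Lipschitz map, I would use that the move is supported on a collection of pairwise disjoint interior edges. The relevant local neighborhood of such an edge $e$ is the subgraph $N$ consisting of $e$ together with its four adjacent edges (a fixed finite metric graph with four distinguished boundary points, all edges of length $1$). Outside the disjoint union of these neighborhoods, the graphs $\Gamma_{i-1}$ and $\Gamma_i$ are identical, and I take the map to be the identity there. For the local model, a Whitehead move replaces $N$ by another fixed metric graph $N'$ with the same four distinguished boundary points but a different pairing of them through the interior. Since $N$ and $N'$ are finite metric graphs of bounded geometry, there is a piecewise linear homeomorphism $N \to N'$ fixing the four boundary points pointwise with a uniform Lipschitz constant $L$, independent of $n$ and of which Whitehead move is being performed (there are only finitely many combinatorial types of such local moves). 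Assembling these local maps over all disjoint supports and extending by the identity gives a globally defined $L$--Lipschitz map $\Gamma_{i-1} \to \Gamma_i$.

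The only point that requires real care is the construction of the local model with boundary matching; once the four boundary points are fixed, the interior reconfiguration is a problem on a compact metric graph of bounded combinatorial type, so a uniform Lipschitz bound exists. Composing the $k = O(\log n)$ maps $\Gamma_{i-1} \to \Gamma_i$, each $L$--Lipschitz, completes the proof.
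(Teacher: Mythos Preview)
Your approach matches the paper's: invoke the $O(\log n)$ bound on simultaneous Whitehead moves from \secref{Trees} (via \eqnref{GraphAboveTree} and \thmref{UpperTree}) and realize each such move by a uniformly Lipschitz map assembled from local pieces on the disjoint supports. One small slip: the local map $N \to N'$ fixing the four boundary points cannot be a \emph{homeomorphism}, since the pairing of the two interior trivalent vertices with the boundary points is different in $N$ and $N'$; it is only a Lipschitz homotopy equivalence (e.g., collapse and re-expand the central edge, or stretch two of the outer edges across $e'$), which is all that is required.
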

    
    \begin{proof}
       
       There is a uniform constant $L$ such that, for any two trivalent
       graphs with edge lengths 1, if they differ by one simultaneous
       Whitehead move, then they differ by a Lipschitz map with Lipschitz
       constant at most $L$. The graphs $G'$ and $H'$ can be cut into
       binary trees of complexity $2n-2$. By \eqnref{GraphAboveTree} and
       \thmref{UpperTree}, $G'$ can be transformed into $H'$ by $O \big(
       \log (n) \big)$ simultaneous Whitehead moves, hence the statement. 
    \end{proof}
    
    \begin{proof}[Proof of \thmref{UpperDiamOuter}].
      
       We construct a Lipschitz map $G \to H$ as a composition of the maps
       coming from the four Lemmas above:
       \[ 
          G 
          \,\, \xrightarrow{\,\,\,\,O \left( \frac{n}{\ep} \right)\,\,\,\,} \,\,
          R_n 
          \,\, \xrightarrow{\,\,\,O(n)\,\,\,} \,\,
          G' 
          \,\, \xrightarrow{\,L^{O(\log n)}\,} \,\,
          H' 
          \xrightarrow{\quad n \quad}
          H.
       \] 
      The Lipschitz constant of the composition $G \to H$ is bounded by the
      product of the four Lipschitz constants, which is bounded by
      $n^d/\ep$ for some uniform constant $d$. Thus, $\dL(G,H) = O \left(
      \log \left( \frac{n}{\ep} \right) \right)$.
   \end{proof}

\section{Examples of surfaces}

  \label{Sec:Examples}
  
  In this section, we construct some examples of surfaces in the thick part
  of moduli space. These examples will provide the required lower bound for
  the width and the height, and hence the diameter, of the thick part. They
  also showcase some interesting behaviors which are of independent
  interest.

  Let's for the moment restrict our attention to closed surfaces. 
  
  To bound the width from below, we construct three surfaces in \Bers which
  are pairwise $\Omega \big( \log(g) \big)$ apart in the Lipschitz metric.
  These surfaces are constructed using graphs. The first two surfaces, the
  line surface $X$ and the bouquet surface $Y$, are constructed from two
  graphs which have a large ratio between their diameters. This ratio
  computes a lower bound on the Lipschitz constant from $Y$ to $X$. The
  third surface, called the expander surface $Z$, is constructed using an
  \emph{expander graph}, or a graph with high connectivity. We will show
  \thmref{IntroExpander} holds for $Z$: every separating curve on $Z$ is of
  length $\Omega(g)$. This will contrast with $X$ and $Y$, where both
  contains separating curves of length $\ep_M$. Then the length ratio of
  separating curves will provide a lower bound on the Lipschitz distance
  from $X$ or $Y$ to $Z$.

  To bound the height from below, we will construct a surface $H \in
  \Mthick$ that cannot be decomposed into pants by curves shorter than
  $\sqrt{g}$. Buser already has such a construction, called the hairy
  torus, but it does not lie in \Mthick. Our construction essentially takes
  two copies of Buser's hairy torus and glue them along the hairs. The
  resulting surface $H$ has $B(H) = \Omega \big( \sqrt{g} \big)$. Using
  length ratios we obtain a lower bound on the Lipschitz constant from $H$
  to any surface in $\Bers$. 
  
  Most of our constructions generalize easily to punctured surfaces. The
  only exception is the expander surface, as the notion of expanders does
  not exist for graphs in \Graphp, $p>0$, so this example is skipped. For
  the height, we give a construction of that works for all $p \ge 0$ and
  all $g \ge 1$, by combining a double hairy torus and a punctured torus.
  For the remaining case of genus $0$, we will refer to the construction in
  \cite{parlier:BPS} of a hairy sphere.
  
  Since \PM covers \Mp, the height and width of \PM are bounded below by
  the height and width of \Mp.
  
  \subsection{Shadow map} 

  \label{Sec:ShadowMap}
  
  Let $\psi : \PBers \to \PGraph$ be the dual graph map. We will regard
  elements in \PGraph as metric graphs by assigning length 1 to each edge.
  Let $X \in \PBers$. Outside of the cusps, $X$ is quasi-isometric to
  $\psi(X)$. To make this precise, we introduce the \emph{shadow map}
  $\Upsilon: X \to \psi(X)$
  
  For each puncture $p$ of $X$, let $N_p$ be the horocyclic neighborhood of
  $p$ such that the horocyclic boundary of $N_p$ has length equal to
  $\ep_M$. Let $\overline{X}$ be the closure of $X \setminus \bigcup N_p$,
  where $p$ ranges over all punctures of $X$. We call $\overline{X}$ the
  \emph{truncated surface} obtained from $X$.
  
  Let $P$ be the associated pants decomposition on $X$. We may assume
  $\ep_M$ is small enough so that $P$ is contained in $\overline{X}$ and
  that the distance from every curve in $P$ to the boundaries of
  $\overline{X}$ is of order $1$. 
  
  Given a constant $A$, for any $\alpha \in P$, let  
  \[
     N_\alpha = \big\{ \, x \in X \,\,\, : \,\,\, d_X(x,\alpha) \le
     A \, \big\}, 
  \] 
  and for each boundary component $\gamma \in \partial \overline X$, let 
  \[
     N_\gamma = \big\{ \, x \in \overline X \,\,\, : \,\,\, d_X(x, \gamma)
     \le A \, \big\}. 
  \] 
  Choose $A$ so that $N = \displaystyle \bigcup_{\alpha \in P} N_\alpha
  \cup \bigcup_{\gamma \in \partial \overline X} N_\gamma $ is a disjoint
  union of embedded annuli. Each component of $\overline X \setminus N$ is
  a pair of pants with diameter bounded uniformly by a constant $D$.
  Foliate each $N_\alpha$ and $N_\gamma$ by closed loops equidistant from
  $\alpha$ and $\gamma$ respectively. The shadow map 
  \[
     \Upsilon : X \to \psi(X)
  \]
  sends each component in $X \setminus N$ to a vertex and each $N_\alpha$
  or $N_\gamma$ to an edge by collapsing leaves and then mapping linearly
  onto the edge. The map $\Upsilon$ is essentially distance decreasing. For
  any $x, y \in \overline X$, 
  \[
     d_{\psi(X)} \big( \Upsilon(x), \Upsilon(y) \big) \prec d_X(x,y).
  \]
  On the other hand, 
  \[
     d_X(x,y) \le (A+D) \, \left( d_{\psi(X)} \big( \Upsilon(x), \Upsilon(y)
     \big) + 2 \right).
  \]
  Thus, $\Upsilon$ is an quasi-isometry from $\overline X$ to $\psi(X)$.
   
  \subsection{Line and bouquet surfaces} 
  
  \label{Sec:LineBouquet}
  
  We construct two graphs in \Graphp, one of which has diameter $g+p$ and
  the other has diameter $\log(g+p)$. 
  
  Consider the tree $T$ with $g+p$ exterior edges as in \figref{LineGraph}.
  We can make $T$ into an element $\Gamma$ in \Graphp by attaching $g$
  loops to $g$ of those edges. The diameter of $\Gamma$ is at least $g+p$.
  
  Now consider any tree $T'$ of height $\log_2(g+p)$ with $g+p$ exterior
  edges. For instance, one can pick the fully-sorted tree (see
  \figref{FullySortedTree}). Again, $T'$ can be made into a graph $\Gamma'
  \in \Graphp$ by attaching $g$ loops. The graph $\Gamma'$ has diameter at
  most $\log_2(g+p)+2$.
  
  \begin{figure}[!htp] 
  \begin{center}
  \includegraphics{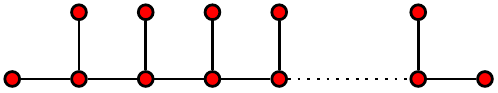}
  \end{center}
  \caption{The tree $T$ giving rise to the line surface.} \label{Fig:LineGraph} 
  \end{figure}

  Let $X=X_{g,p}$ and $Y=Y_{g,p}$ be elements of $\Bersp$ such that
  $\psi(X) = \Gamma$ and $\psi(Y)=\Gamma'$. We will refer to $X$ and $Y$ as
  a \emph{line surface} and \emph{bouquet surface}, respectively. 

  \begin{lemma} \label{Lem:LowerWidth}
  
     \[ \diam_L \big( \Bersp \big) = \Omega \big( \log(g+p) \big). \]
  
  \end{lemma}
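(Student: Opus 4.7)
The plan is to show that $\dL(Y,X) = \Omega(\log(g+p))$, by exploiting the gap in diameter between the dual graphs $\Gamma=\psi(X)$ and $\Gamma'=\psi(Y)$. This lower bound on a single Lipschitz distance in $\PBers$ immediately descends to $\Bersp=\PBers/\Sym_p$ since the projection is distance non-increasing.

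First, I would transfer the diameter information from the graphs to the surfaces. By construction, $\diam(\Gamma)\ge g+p$ while $\diam(\Gamma')\le \log_2(g+p)+2$. The shadow map $\Upsilon$ of \secref{ShadowMap} is a uniform quasi-isometry from the truncated surface $\overline{X}$ (with its induced hyperbolic metric) to $\Gamma$, and similarly for $\overline{Y}$. Consequently $\diam(\overline{X})\succ g+p$ and $\diam(\overline{Y})\prec \log(g+p)$.

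Next, let $f\from Y\to X$ be any Lipschitz map in the homotopy class of a homeomorphism; it suffices to prove $\log L(f)\succ \log(g+p)$. Such an $f$ has degree $\pm 1$ and is therefore surjective. After a homotopy adjusting $f$ on horocyclic neighborhoods of the cusps (which affects $L(f)$ by at most a uniformly bounded multiplicative factor), I may assume $f$ sends each cusp of $Y$ into the corresponding cusp of $X$. Surjectivity then forces $f(\overline{Y})$ to cover $\overline{X}$ up to a uniformly bounded error. The Lipschitz inequality applied to diameters yields
\[
\diam(\overline{X}) - O(1) \;\le\; \diam\bigl(f(\overline{Y})\bigr) \;\le\; L(f)\cdot \diam(\overline{Y}).
\]
Combining with the estimates on $\diam(\overline X)$ and $\diam(\overline Y)$, this gives $L(f)\succ (g+p)/\log(g+p)$, hence
\[
\log L(f) \;\ge\; \log(g+p) - \log\log(g+p) \;=\; \Omega(\log(g+p)),
\]
as desired.

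The main obstacle is the cusp bookkeeping in the punctured case: making precise that a near-optimal Lipschitz map can be arranged so that it preserves horocyclic neighborhoods and $f(\overline{Y})$ covers $\overline{X}$ up to an additive error, without inflating the Lipschitz constant by more than a bounded multiplicative factor. In the closed case ($p=0$) this step is trivial, since $\overline{X}=X$, $\overline{Y}=Y$, and degree $\pm 1$ immediately gives $\diam(X)\le L(f)\cdot\diam(Y)$. All diameter estimates above are invariant under permutations of cusp labels, so the bound is unaffected by passing to the quotient $\Bersp$.
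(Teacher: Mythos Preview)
Your overall strategy---compare diameters of the truncated line and bouquet surfaces via the shadow-map quasi-isometry---is exactly the paper's. The gap is precisely where you flag it: the claim that one can homotope $f$ to carry horocyclic cusp neighborhoods into cusp neighborhoods at the cost of only a \emph{uniformly bounded} multiplicative factor in $L(f)$ is neither justified nor clearly true. The image $f(\partial N_p)$ is a loop of length $\le L\,\ep_M$ homotopic to the puncture, and such a loop can sit as far as $\asymp\log L$ from $N_{p'}$ in $X$; any homotopy pushing it in must move points by that amount, so the correction depends on $L$, not on a uniform constant.

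The paper avoids modifying $f$ altogether. It fixes $x_1,x_2\in\overline X$ at distance $\succ g+p$, chooses preimages $y_i\in f^{-1}(x_i)$, and observes that since an $L$--Lipschitz homotopy equivalence can only shrink injectivity radius by a factor $L$, each $y_i$ lies within $\log L$ of $\overline Y$. Hence $y_1,y_2$ are joined by an arc $\omega$ with $\ell_Y(\omega)\prec \log(g+p)+2\log L$, while $\ell_X(f(\omega))\succ g+p$, giving
\[
L \;\succ\; \frac{g+p}{\log(g+p)+2\log L}.
\]
A two-line case split (either $L\ge g+p$, or $L<g+p$ so the denominator is $\prec\log(g+p)$) then yields $\log L=\Omega(\log(g+p))$. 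This injectivity-radius bound is the clean substitute for your cusp-adjustment step; once you insert it, your argument and the paper's coincide.
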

   
  \begin{proof}
     
     We will use $\dL(Y,X)$ to achieve this lower bound, where $X$ is the
     line surface and $Y$ is the bouquet surface in $\Bersp$.

     The section on shadow map implies \[ \diam(\overline{X}) \succ
     g+p \qquad \text{and} \qquad \diam(\overline{Y}) \prec \log(g+p), \]
     where $\overline{X}$ and $\overline{Y}$ are truncated surfaces
     obtained from $X$ and $Y$ respectively. 

     Consider any $L$--Lipschitz map $f : Y \to X$. We can choose two
     points $x_1$ and $x_2$ in $\overline{X}$ a distance $\Omega(g+p)$
     apart. Let $y_i=f^{-1}(x_i)$. Since $x_i$ has injectivity radius at
     least $\ep_0$, the injectivity radius at $y_i$ is at least $\ep_0/L$.
     So $y_i$ has distance at most $\log L$ from $\overline Y$. We can
     connect $y_1$ to $y_2$ by an arc $\omega$ in $Y$ with \[
     \ell_Y(\omega) \prec \log(g+p) + 2 \log L. \] The image $f(\omega)$ is
     an arc connecting $x_1$ to $x_2$, so $\ell_X \big( f(\omega) \big)
     \succ (g+p)$. We have
     \[ 
        L \ge \frac{\ell_X \big( f(\omega) \big)}{\ell_Y 
        \big( \omega \big)} \succ \frac{g+p}{\log(g+p) + 2\log L}.  
     \] 
     In the case that $L \ge g+p$, then $\dL(Y,X) = \Omega \big( \log(g+p)
     \big)$. In the case that $L \le g+p$, then the above becomes \[ L
     \succ \frac{g+p}{3 \log(g+p)}.\] 
     Thus, we also obtain 
     \[ 
        \dL(Y, X) \succ \log \left( \frac{g+p}{3\log(g+p)} \right) 
        = \Omega \big( \log(g+p) \big). \qedhere
     \] 
  
  \end{proof}
  
  Combining \lemref{LowerWidth} with \corref{UpperWidth}, we obtain

  \begin{corollary} \label{Cor:DiamWidth}

     The \Teich and the Lipschitz width of \PMthick and \Mpthick are all of
     order $\log(g+p)$.
  
  \end{corollary}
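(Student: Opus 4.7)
The claim packages four estimates: the Teichm\"uller and Lipschitz widths of both $\PMthick$ and $\Mpthick$ grow like $\log(g+p)$. Two of these four are already in hand by the time the corollary is stated: \corref{UpperWidth} gives the Teichm\"uller upper bound $\diam_T(\PBers) = O(\log(g+p))$, while \lemref{LowerWidth} gives the Lipschitz lower bound $\diam_L(\Bersp) = \Omega(\log(g+p))$. The plan is to extract the remaining two estimates by combining the pointwise inequality $\tfrac{1}{2}\dL \le \dT$ from \eqnref{TwoMetrics} with the obvious contraction coming from the quotient $\Bersp = \PBers/\Sym_p$.

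First, I would take suprema in \eqnref{TwoMetrics} to obtain $\diam_L \le 2\, \diam_T$ on any subset of $\PM$. Applied to $\PBers$, this upgrades the Teichm\"uller upper bound of \corref{UpperWidth} to a Lipschitz upper bound $\diam_L(\PBers) \le 2\, \diam_T(\PBers) = O(\log(g+p))$. Rewriting the same inequality as $\diam_T \ge \tfrac{1}{2}\, \diam_L$ and applying it to $\Bersp$ turns the Lipschitz lower bound of \lemref{LowerWidth} into a Teichm\"uller lower bound $\diam_T(\Bersp) \ge \tfrac{1}{2}\, \diam_L(\Bersp) = \Omega(\log(g+p))$.

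Finally, I would use that the quotient map $\PBers \to \Bersp$ is a finite projection by isometries in both metrics, hence distance non-increasing, so $\diam(\Bersp) \le \diam(\PBers)$ for either choice of metric. Consequently the upper bounds established on $\PBers$ descend to $\Bersp$, and the lower bounds established on $\Bersp$ lift to $\PBers$. Assembling the four resulting estimates gives $\diam_T(\PBers), \diam_L(\PBers), \diam_T(\Bersp), \diam_L(\Bersp) \asymp \log(g+p)$.

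There is no substantive obstacle to overcome: the corollary is essentially a bookkeeping consequence of the $L^\infty$ metric comparison together with the genuinely hard inputs already proved in the paper, namely the sorting-algorithm upper bound (via \thmref{UpperTree} and \corref{WidthAboveGraph}) and the line-versus-bouquet surface comparison of \lemref{LowerWidth}.
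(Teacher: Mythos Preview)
Your argument is correct and is exactly the unpacking of what the paper leaves implicit: the corollary is stated immediately after \lemref{LowerWidth} with the phrase ``Combining \lemref{LowerWidth} with \corref{UpperWidth}, we obtain,'' and your proposal simply makes explicit the two transfer mechanisms (the comparison \eqnref{TwoMetrics} between $\dL$ and $\dT$, and the contraction under the $\Sym_p$--quotient) needed to pass from one established bound to the other three.
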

  
  Combining \lemref{LowerWidth} with \corref{WidthAboveGraph} and
  \corref{UpperGraph}, we also obtain:

  \begin{corollary}\label{Cor:DiamGraph}
    
     \[ {\rm \diam_S \big( Tree}(g+p) \big) \asymp  \diam_S \big( \PGraph
     \big) \asymp \diam_S \big( \Graphp \big) \asymp \log(g+p). \]
  
  \end{corollary}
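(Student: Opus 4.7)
The goal is to show three quantities are all $\asymp \log(g+p)$, and almost all of the ingredients are already in hand; the proof is mostly an assembly.

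For the upper bounds, I would invoke \thmref{UpperTree} which states that $\diam_S(\text{Tree}(n)) = O(\log n)$. Since $n = 2g-2+p$ satisfies $\log n \asymp \log(g+p)$ (both vanishing cases being ruled out by $2g-2+p > 0$), this yields $\diam_S(\text{Tree}(g+p)) = O(\log(g+p))$. The inequality chain \eqnref{GraphAboveTree} immediately propagates this upper bound down to $\diam_S(\PGraph)$ and $\diam_S(\Graphp)$, which is already recorded as \corref{UpperGraph}.

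For the lower bounds, the natural place to start is \lemref{LowerWidth}, which constructs the line and bouquet surfaces in $\Bersp$ and shows $\diam_L(\Bersp) = \Omega(\log(g+p))$. Using the comparison $\dL \le 2\dT$ from \eqnref{TwoMetrics} (applied in the quotient), this upgrades to $\diam_T(\Bersp) = \Omega(\log(g+p))$. The next step is to transfer this lower bound from $\Bersp$ to $\diam_S(\Graphp)$ via the dual graph map. Concretely, I would argue a $\Sym_p$--equivariant version of \lemref{WidthAboveGraph}: given any two classes $\bar X, \bar Y \in \Bersp$ whose dual graphs in $\Graphp$ are joined by a path of $k$ simultaneous Whitehead moves, one may pick representatives of each intermediate class in $\PGraph$ so that consecutive representatives differ by a simultaneous Whitehead move, then lift to a chain in $\PBers$ and apply \lemref{WidthAboveGraph} on each segment. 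Projecting back to the quotient, this gives $\diam_T(\Bersp) \prec \diam_S(\Graphp)$, so $\diam_S(\Graphp) = \Omega(\log(g+p))$.

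Since $\Graphp$ is a quotient of $\PGraph$, the quotient map is distance non-increasing, so $\diam_S(\Graphp) \le \diam_S(\PGraph)$, which propagates the lower bound up. Similarly, the distance-increasing embedding $\PGraph \hookrightarrow \text{Tree}(n)$ from \secref{GraphtoTree} (and the discussion preceding \eqnref{GraphAboveTree}) gives $\diam_S(\PGraph) \le \diam_S(\text{Tree}(n))$, pushing the lower bound all the way to $\text{Tree}$. Combining all the inequalities yields the three-way asymptotic equivalence with $\log(g+p)$.

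The only genuinely nontrivial point is the equivariant lifting step: one must check that any length-one edge in $\Graphp$ lifts to a length-one edge in $\PGraph$, which is clear because adjacency in $\Graphp$ is defined as the existence of representatives differing by one simultaneous Whitehead move. Everything else is bookkeeping with the inequalities already established.
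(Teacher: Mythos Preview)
Your proof is correct and follows essentially the same route as the paper, which simply cites \lemref{LowerWidth}, \corref{WidthAboveGraph}, and \corref{UpperGraph} (together with \eqnref{GraphAboveTree}) and leaves the assembly to the reader. You are in fact more careful than the paper on one point: the cited \corref{WidthAboveGraph} only relates $\diam_T(\Bersp)$ to $\diam_S(\PGraph)$, not to $\diam_S(\Graphp)$, and your $\Sym_p$--equivariant version of \lemref{WidthAboveGraph} is exactly what is needed to push the lower bound down to the unlabeled quotient---the paper leaves this step implicit.
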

  
  Finally, we can derive the lower bound to the diameter of \Xthick.
  Together with the upper bound coming from \thmref{UpperDiamOuter}, we
  obtain

  \begin{lemma} \label{Lem:DiamOuter}
     
     \[ \diam_L(\Xthick) \asymp \log \left( \frac{n}{\ep} \right). \]

  \end{lemma}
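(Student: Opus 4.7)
The upper bound in this lemma is provided by \thmref{UpperDiamOuter}, so the only thing to establish is the matching lower bound $\diam_L(\Xthick) \succ \log(n/\ep)$. The plan is to exhibit a pair of metric graphs in $\Xthick$ whose asymmetric Lipschitz distances, one in each direction, realize this bound, in direct analogy with the line-versus-bouquet comparison used for surfaces in \lemref{LowerWidth}. Writing $\log(n/\ep) = \log n + \log(1/\ep)$ and using $\max(a,b) \ge \tfrac12(a+b)$ for $a,b \ge 0$, it suffices to produce a single pair $G_\ep, H \in \Xthick$ so that one direction of the Lipschitz distance is $\succ \log n$ and the opposite direction is $\succ \log(1/\ep)$.

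For the compact graph, take $H = R_n$ with each of the $n$ circles given length $1$; then $H \in \Xthick$ has metric diameter equal to $1$, and every essential loop has length at least $1$. For the stretched graph, I would construct a trivalent metric graph $G_\ep$ of rank $n$ supported on a long cycle $C$ along which $(n-1)$ digons have been inserted. By choosing each arc of $C$ and each edge of the $(n-2)$ ``long'' digons to have length of order $1$, and the two edges of the remaining distinguished digon to have length $\ep/2$, and then rescaling the total volume to exactly $n$, the resulting graph $G_\ep$ is trivalent, lies in $\Xthick$, has shortest essential loop of length $\asymp \ep$, and has metric diameter $\asymp n$, realized by two antipodal points of $C$.

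Now run a length-ratio argument in each direction. First, let $f : H \to G_\ep$ be an $L$--Lipschitz homotopy equivalence, choose $x_1, x_2 \in G_\ep$ with $d_{G_\ep}(x_1,x_2) \succ n$, and pick preimages $y_i \in f^{-1}(x_i)$. Connecting $y_1$ to $y_2$ by a shortest path $\omega$ of length at most $\diam(H) = 1$, the image $f(\omega)$ is an arc from $x_1$ to $x_2$, so
\[
n \prec d_{G_\ep}(x_1,x_2) \le \ell_{G_\ep}\big(f(\omega)\big) \le L \cdot \ell_H(\omega) \le L,
\]
which yields $\dL(H, G_\ep) \succ \log n$. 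Second, let $g : G_\ep \to H$ be an $L$--Lipschitz homotopy equivalence and let $\gamma$ be the shortest essential loop of $G_\ep$, of length of order $\ep$. Since $g$ is a homotopy equivalence, $g(\gamma)$ is essential in $H$ and hence has length at least $1$, giving $1 \prec L \ep$ and so $\dL(G_\ep, H) \succ \log(1/\ep)$. Taking the larger of these two asymmetric distances bounds $\diam_L(\Xthick)$ below by $\succ \log(n/\ep)$.

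The main technical obstacle is the bookkeeping in the construction of $G_\ep$: one must simultaneously arrange that every vertex has valence at least three, the total volume equals $n$, the shortest loop has length of exact order $\ep$, and the metric diameter is a definite constant multiple of $n$. A secondary but essential point, standard in the Outer-space literature, is that the infimum in the definition of $\dL$ on $\X$ should be taken over homotopy equivalences rather than arbitrary Lipschitz maps (otherwise constant maps would trivialize the metric); under that convention, the image of an essential loop remains essential, which is what legitimizes the short-loop argument.
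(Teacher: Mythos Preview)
Your argument is correct and follows essentially the same strategy as the paper's: both use a diameter-ratio argument against the rose $R_n$ to get the $\log n$ contribution, and a shortest-loop-ratio argument to get the $\log(1/\ep)$ contribution, then combine via $\max(a,b)\ge\tfrac12(a+b)$. The only difference is organizational: the paper uses \emph{three} graphs (the rose $R_n$, a separate long-diameter graph $\Gamma$, and a separate short-loop graph $H$) and bounds $\dL(R_n,\Gamma)$ and $\dL(H,R_n)$, whereas you economize by building a single graph $G_\ep$ that is simultaneously long and has a short loop, and then use the two directions $\dL(R_n,G_\ep)$ and $\dL(G_\ep,R_n)$ of the asymmetric distance for the same pair. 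Your observation that the infimum defining $\dL$ must be over homotopy equivalences is well taken; one small point you leave implicit (as does the paper) is that such an $f\colon R_n\to G_\ep$ is automatically surjective, since otherwise $f$ would factor through a graph of strictly smaller rank, contradicting that $f_*$ is an isomorphism---this is what guarantees the preimages $y_i$ exist.
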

  
  \begin{proof}
      
     We show the lower bound.
     
     Let $\Gamma \in \Xthick$ be a graph of diameter of order $n$. For
     instance, pick $\Gamma$ to be the graph inducing the line surface
     $X_n$, but renormalized to have volume $n$. The wedge $R_n$ of $n$
     circles (with edge lengths 1) has diameter $1$. Thus $\dL (R_n, G)
     \succ \log(n)$. On the other hand, let $H \in \Xthick$ be any graph
     which has a loop of length $\ep$. Then $\dL(H,R_n) \ge \log \left(
     \frac{1}{\ep} \right)$. It follows that \[ \diam_L \big( \Xthick \big)
     \succ \frac{1}{2} \left( \log(n) + \log \left( \frac{1}{\ep} \right)
     \right) = \frac{1}{2} \log \left( \frac{n}{\ep} \right). \qedhere \]
  \end{proof}

  \subsection{Expander surfaces} \label{Sec:Expander}
    
  In this section, we consider only closed surfaces. In this case, the dual
  graph to a pants decomposition is a trivalent graph. We will use
  trivalent graphs with ``high'' connectivity to construct surfaces in
  \Bersp with long separating curves. These surfaces will be $\Omega \big(
  \log(g) \big)$ away from the examples of the previous section, providing
  another proof of \lemref{LowerWidth}. To formalize the notion of
  connectivity, we define the Cheeger constant of a graph. 
  
  Let $\Gamma$ be any graph with $n$ edges. For any subgraph $\Delta$ in
  $\Gamma$, let $\big| \Delta \big|$ be the number of edges in $\Delta$. We
  will let $\partial \Delta \subset \Delta$ be the subset of edges in
  $\Delta$ which share a vertex with an edge outside of $\Delta$. The
  \emph{Cheeger constant} of $\Gamma$ is defined to be
  \[ \ch(\Gamma) = \min_{1 \le |\Delta| \le \frac{n}{2}} \frac{|\partial
  \Delta|}{|\Delta|}, \] where the minimum is taken over all subgraphs
  $\Delta$ with at most $\frac{n}{2}$ edges. 

  \begin{definition} \label{Def:Expander}

     An infinite family $\calE$ of $d$-regular graphs is a
     $\delta$-expander family if $\ch(E) \ge \delta$ for every $E \in
     \calE$.  
  
  \end{definition}

  \begin{theorem}[\cite{pinsker,margulis:ECE}]

     For every $d$, there exists a $\delta_d$--expander family of
     $d$--regular graphs.
     
  \end{theorem}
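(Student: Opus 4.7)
The plan is to follow Pinsker's probabilistic argument, which shows that a uniformly random $d$-regular graph on $n$ vertices is an expander with probability tending to $1$ as $n \to \infty$. I would sample from the configuration model: assign $d$ half-edges to each of $n$ vertices and pair the resulting $dn$ half-edges uniformly at random. Standard results show that the resulting random multigraph $\Gamma_n$ is simple with probability bounded away from $0$, and conditioning on simplicity does not affect the expansion estimates by more than a constant factor.

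The heart of the argument is a single-cut estimate. Fix a vertex subset $A$ of size $k$ with $k \le n/2$, and let $e(A)$ denote the number of edges matched entirely inside $A$. If the cut from $A$ to its complement carries at most $\delta d k$ edges, then at least $(1-\delta) d k$ of the half-edges rooted in $A$ must be paired within $A$. Using the exact counts in the configuration model together with Stirling's approximation, one bounds the probability of this event by $c(\delta, d)^k$, where $c(\delta,d) \to 0$ as $\delta \to 0$ with $d$ fixed.

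A union bound over all such subsets then yields that the probability that $\Gamma_n$ contains some cut of ratio less than $\delta$ is at most $\sum_{k=1}^{\lfloor n/2 \rfloor} \binom{n}{k}\, c(\delta, d)^k$. Choosing $\delta$ sufficiently small depending only on $d$ makes the general term of this sum at most $2^{-k}$, so the total is strictly less than $1$ uniformly in $n$. Hence for every large $n$ there exists a $d$-regular simple graph $\Gamma_n$ with vertex-expansion at least some $\delta_d>0$. Since each edge has at most $d-1$ neighbors at either endpoint, the edge-version of $\ch$ defined in the paper is comparable to the vertex version up to a factor depending only on $d$, so this yields $\ch(\Gamma_n) \ge \delta_d'$. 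Collecting the graphs $\Gamma_n$ along an infinite sequence of $n$ produces the required family $\calE$.

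The main obstacle lies in the calibration of the union bound: the $\binom{n}{k}$ counting factor grows exponentially in $k$, so the per-cut probability estimate must decay fast enough to compensate, and this is precisely where Pinsker's original Stirling calculation is delicate. An entirely different deterministic route is available through Margulis's construction via Cayley graphs of $SL_2(\Z/p\Z)$ with a fixed generating set, where representation-theoretic spectral estimates bound the second eigenvalue of the adjacency operator strictly below $d$ and translate, via the discrete Cheeger inequality, to an explicit lower bound on $\ch$.
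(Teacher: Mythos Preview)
The paper does not prove this theorem at all: it is stated with a citation to \cite{pinsker,margulis:ECE} and used as a black box. So there is no ``paper's proof'' to compare against; the authors simply import the existence of expander families from the literature.

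Your sketch is a faithful outline of Pinsker's probabilistic argument and is essentially correct. A couple of minor calibration points are worth flagging. First, the paper's Cheeger constant $\ch(\Gamma)$ is defined over \emph{subgraphs} $\Delta$ (counting edges in $\Delta$ and edges of $\Delta$ that meet an outside edge), not over vertex subsets; you acknowledge this, and indeed for $d$--regular graphs the subgraph version and the usual vertex-isoperimetric version are comparable up to constants depending only on $d$, so this is fine. Second, in the union bound one has to be slightly careful for small $k$ (say $k=1,2$), where in a random $d$--regular graph the relevant ``bad'' event may not have probability below $2^{-k}$ for the particular $\delta$ chosen; the standard fix is to observe that for bounded $k$ the cut ratio is automatically bounded below (e.g.\ a single vertex has all $d$ of its edges leaving), so these terms contribute nothing once $\delta_d$ is taken small enough. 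With those caveats, your argument goes through. Your closing remark about the Margulis/Cayley-graph route and the discrete Cheeger inequality is also accurate and matches the second citation.
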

  
  Fix $\delta$ and let $\calE$ be a $\delta$--expander family of trivalent
  graphs. We will call a surface $Z_g \in \Bers$ an \emph{expander surface}
  if its dual graph $E_g$ is an element $\calE$. By a \emph{dividing curve}
  on a closed surface \s, we will mean a separating curve on \s which
  divides \s into two pieces each of which has genus on the order of $g$.
  We will prove the following fact about expander surfaces, which was known
  to Buser (\cite{buser:Eigenvalues}).

  \begin{theorem} \label{Thm:Expander}
    
     If $Z_g \in \Bers$ is an expander surface, then the shortest dividing
     curve on $Z_g$ has length $\Omega(g)$.

  \end{theorem}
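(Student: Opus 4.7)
The plan is to derive a contradiction with the expander Cheeger inequality from a hypothetical short dividing curve on $Z_g$. Let $\gamma$ be a dividing curve, write $K = \ell_{Z_g}(\gamma)$, and let $P$ be the canonical pants decomposition of $Z_g$ with dual graph $E_g$. Since every pants in $Z_g \setminus P$ has all boundary lengths equal to $\ep_M$, such pants form a compact family, so any geodesic arc with endpoints on the boundary has length bounded below by a uniform $c > 0$; cutting $\gamma$ by $P$ produces $i(\gamma,P)$ arcs, giving $K \ge c \cdot i(\gamma,P)$. I would then partition the pants of $Z_g \setminus P$ into $V_L, V_R$ (those disjoint from $\gamma$, lying respectively in the components $\Sigma_L, \Sigma_R$ of $Z_g \setminus \gamma$) and $V_s$ (those met by $\gamma$); each arc of $\gamma \setminus P$ lives in a single pant, so $|V_s| \le i(\gamma,P) \le K/c$. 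By Gauss--Bonnet, $\area(\Sigma_L) = 2\pi(2g_L - 1)$ with $g_L = \Omega(g)$ because $\gamma$ is dividing, and since $\Sigma_L$ decomposes as the $V_L$-pants (each of area $2\pi$) together with subsets of the $V_s$-pants, a standard area count gives $|V_L| \ge 2g_L - 1 - |V_s| = \Omega(g) - O(K)$, and symmetrically for $|V_R|$.

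The geometric heart of the argument is the following topological observation: no edge of $E_g$ connects $V_L$ to $V_R$. For such an edge would be a pants curve $\alpha \ne \gamma$ bordering a $V_L$-pant and a $V_R$-pant, each disjoint from $\gamma$; then $\alpha \subset \overline{\Sigma_L} \cap \overline{\Sigma_R} = \gamma$, forcing $\alpha = \gamma$, a contradiction. I would also dispense with the degenerate case $\gamma \in P$ separately: if $\gamma$ were itself a pants curve, removing the corresponding edge of $E_g$ would split it into two subgraphs of size $\Omega(g)$, which directly violates the expander inequality via a one-line Cheeger computation. So I may assume $\gamma \notin P$.

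The finale is a Cheeger estimate. After possibly swapping the roles of $L$ and $R$, assume $|V_L| \le g-1$, and let $\Delta \subset E_g$ be the subgraph of edges with both endpoints in $V_L$; then $|\Delta| \le 3|V_L|/2 \le (3g-3)/2$, so the expander hypothesis applies and yields $|\partial\Delta| \ge \delta\, |\Delta|$. By the preceding paragraph every edge out of $V_L$ lands in $V_s$, so the number of escaping edges is at most $3|V_s|$; the handshaking identity $2|\Delta| + (\text{escaping edges}) = 3|V_L|$ then gives $|\Delta| \ge \tfrac{3}{2}(|V_L| - |V_s|) = \Omega(g) - O(K)$. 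On the other hand, each edge of $\partial\Delta$ has at least one endpoint among the $\le 3|V_s|$ vertices of $V_L$ adjacent to $V_s$, and each such vertex lies on at most $3$ edges of $E_g$, so $|\partial\Delta| \le 9|V_s| = O(K)$. Substituting into Cheeger gives $O(K) \ge \delta\bigl(\Omega(g) - O(K)\bigr)$, forcing $K = \Omega(g)$.

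The main obstacle is the last step: one needs both $|\Delta| = \Omega(g)$ and $|\partial\Delta| = O(\ell(\gamma))$ simultaneously, and this delicate balance relies entirely on the topological fact that edges of $E_g$ between $V_L$ and $V_R$ are forced to collapse onto $\gamma$. Without that observation there is no reason $|\partial\Delta|$ should be small compared to $|V_L|$, and the expander inequality would yield nothing. The rest is essentially bookkeeping with Euler characteristic and the trivalence of $E_g$.
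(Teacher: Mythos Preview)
Your argument is correct and follows the same underlying strategy as the paper---translate a short dividing curve into a sparse cut of the dual graph $E_g$ and invoke the Cheeger bound---but the execution is different. The paper introduces the shadow map $\Upsilon \colon Z_g \to E_g$ (a quasi-isometry from the truncated surface to the graph) and applies Cheeger to the subgraph $\Upsilon(U)$, where $U$ is one side of $\gamma$. The two key estimates $|\Upsilon(U)| = \Omega(g)$ and $4\,|\partial\Upsilon(U)| \le |\Upsilon(\gamma)|$ then follow from the Euler characteristic and a short pigeonhole, and the distance-decreasing property of $\Upsilon$ converts $|\Upsilon(\gamma)| = \Omega(g)$ back into $\ell_{Z_g}(\gamma) = \Omega(g)$. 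This packages your ``no $V_L$--$V_R$ edge'' observation and your intersection-number bound into a single geometric statement about $\Upsilon$.

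Your route is more elementary: you avoid the shadow map entirely, working instead with the vertex partition $V_L \cup V_s \cup V_R$ and explicit handshaking in the trivalent graph. The price is more bookkeeping (the separate treatment of $\gamma \in P$, the bound on $|\partial\Delta|$ via vertices adjacent to $V_s$, etc.), and you have to argue directly that essential arcs in a pair of pants with boundary lengths $\ep_M$ have a uniform length lower bound---the paper gets this for free from the quasi-isometry. On the other hand, your approach makes the combinatorics completely transparent and does not rely on any auxiliary construction. Both proofs are short; the paper's is a bit slicker, yours is a bit more self-contained.
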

  
  \begin{proof}

     Let $\alpha$ be any dividing curve on $Z_g$, and let $U$ the closure
     of one of the components of $Z_g \setminus \alpha$. By assumption, the
     genus of $U$ is of order $g$.  
     
     Let $\Upsilon : Z_g \to \Gamma_g$ be the shadow map. We claim: 
     \begin{equation} \label{Eq:Expander}
        \big| \Upsilon(U) \big| = \Omega(g) \qquad 
        \text{and} \qquad
        4 \big|\partial \Upsilon(U) \big| \le \big| \Upsilon(\alpha) \big|.
     \end{equation}
     To see the first statement, let $U'$ be the largest subsurface of
     $Z_g$ with the same shadow as $U$ (that is, the preimage of
     $\Upsilon(U)$). Since $U$ is a subsurface of $U'$, we have $\big|
     \chi(U') \big| \geq \big|\chi(U) \big|= \Omega(g)$. But $U'$ is a
     union of pairs of pants, exactly one associated to a vertex in
     $\Upsilon (U)$. Hence $\big|\chi(U')\big| \asymp  \big| \Upsilon(U)
     \big|$. That is, $ \big| \Upsilon(U) \big| = \Omega(g)$. 
     So the first statement follows.
     
     For the second statement, it is sufficient to construct a map from
     $\partial \Upsilon(U)$ to $\Upsilon(\alpha)$ so that the preimage of
     every edge has a uniformly bounded size. Let $e \in \partial
     \Upsilon(U)$. Then $e$ shares a vertex $v$ with an edge outside of
     $\Upsilon(U)$. The vertex $v$ corresponds to a pair of pants and
     $\alpha$ has to intersect this pair of pants non trivially. Hence,
     there is an edge connected to $v$ that is in $\Upsilon(\alpha)$. We
     send $e$ to this edge. The preimage of an edge in $\Upsilon(\alpha)$
     under this map has size at most $4$. 
     
     Using \eqref{Eq:Expander} and the fact that $\calE$ is an expander
     family, we obtain 
     \[ 
     \big| \Upsilon(\alpha) \big| \ge 4\big| \partial \Upsilon(U) \big|
     \ge 4\delta \big| \Upsilon(U) \big| = \Omega(g).
     \] 
     This bounds the length of $\Upsilon(\alpha)$ from below. Since the
     shadow map $\Upsilon$ is essentially distance decreasing, we obtain
     $\ell_{Z_g}(\alpha) = \Omega(g)$. 
  \end{proof}
  
  \begin{lemma} \label{Lem:ExpandertoLineandBouquet}
    
     Let $X_g$, $Y_g$, and $Z_g$ be a line surface, bouquet surface, and
     expander surface, respectively.

     \[ \dL (X_g, Z_g) = \Omega \big( \log(g) \big), \qquad \dL (Y_g,
     Z_g) = \Omega \big( \log(g) \big). \]

  \end{lemma}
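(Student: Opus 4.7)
The plan is to exhibit a short dividing curve on each of $X_g$ and $Y_g$, and to bound the length of its image under any Lipschitz map $X_g \to Z_g$ (resp.\ $Y_g \to Z_g$) from below using \thmref{Expander}.

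First I would verify that both $X_g$ and $Y_g$ contain a dividing curve of length exactly $\ep_M$. By construction, each of these surfaces lies in $\Bers$, so admits a pants decomposition in which every curve has length $\ep_M$; it therefore suffices to produce a \emph{pants curve} that separates the surface into two pieces of genus $\Omega(g)$. This is purely graph-theoretic: I want an edge $e$ of the dual graph ($\Gamma$ or $\Gamma'$) whose removal disconnects it into two components of rank $\Omega(g)$ each. For the line graph $\Gamma$, if the $g$ loops are attached to exterior edges distributed roughly evenly along the spine of $T$, then the central edge of the spine separates the loops into two groups of sizes $\asymp g$; for the balanced bouquet graph $\Gamma'$, one of the two edges adjacent to the root works provided the loops are attached to leaves split evenly between the two children subtrees. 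In both cases this is a mild choice in the construction of \secref{LineBouquet} that does not affect the diameter estimates used earlier.

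Second, let $\alpha$ be such a dividing curve on $X_g$, and let $f : X_g \to Z_g$ be any $L$-Lipschitz map with $L = e^{\dL(X_g,Z_g)}$. Then $f(\alpha)$ is a closed curve on $Z_g$ of length at most $L\,\ep_M$, so its geodesic representative $\beta$ also satisfies $\ell_{Z_g}(\beta) \le L\,\ep_M$. Since $f$ is a homotopy equivalence preserving topological type, $\beta$ is again a dividing curve on $Z_g$. By \thmref{Expander}, $\ell_{Z_g}(\beta) = \Omega(g)$. Combining,
\[
   L\,\ep_M \;\ge\; \ell_{Z_g}(\beta) \;\succ\; g,
\]
so $L \succ g$, i.e.\ $\dL(X_g,Z_g) = \log L = \Omega(\log g)$. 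The argument for $\dL(Y_g,Z_g)$ is identical, using the dividing curve on $Y_g$.

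The only nontrivial step is the first one: arranging the constructions of $X_g$ and $Y_g$ so that the required short dividing curve actually exists. Once that is in place, the rest is a one-line length comparison powered by \thmref{Expander}. Note that the direction of the Lipschitz metric matters --- the short curve must live on the \emph{domain} side --- which is why the statement compares $\dL(X_g, Z_g)$ and $\dL(Y_g, Z_g)$ rather than the reverse.
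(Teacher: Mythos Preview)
Your proposal is correct and follows essentially the same route as the paper's proof: exhibit a dividing pants curve of length $\ep_M$ on $X_g$ (resp.\ $Y_g$), note that any homeomorphism to $Z_g$ carries it to a dividing curve, and invoke \thmref{Expander} to force the Lipschitz constant to be $\Omega(g)$. The paper is terser about the existence of the short dividing curve (it just asserts the dual graph is split into two equal pieces by one edge), whereas you spell out the mild choice in the construction needed to guarantee this; otherwise the arguments coincide.
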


  \begin{proof}

     To see $\dL(X_g, Z_g) = \Omega \big( \log(g) \big)$, let $\Gamma_g$ be
     the graph from which we constructed $X_g$. One sees that $\Gamma_g$
     can be divided into two roughly equal-sized pieces by one edge. The
     associated pants decomposition of $X_g$ contains a dividing curve of
     length $\ep_M$.     
     
     On the other hand, from \thmref{Expander}, any dividing curve on $Z_g$
     has length $\Omega(g)$. Any homeomorphism $X_g \to Z_g$ must take
     dividing curves to dividing curves. Using length ratio, we obtain a
     lower bound for the Lipschitz constant.
     \[ 
        \dL(X_g, Z_g) \succ \log \left( \frac{g}{\ep_M} \right)= \Omega
        \big( \log(g) \big).
     \] 
     Similarly, $Y_g$ also admits a dividing curve of length $\ep_M$. So
     the same argument also shows $\dL (Y_g, Z_g) = \Omega \big( \log(g)
     \big)$. 
  \end{proof}

  \subsection{Hairy Torus example and a lower bound for height}

  \label{Sec:HairyTorus} 

  Recall that surfaces in \Bersp can be decomposed into pants by curves of
  length $\ep_M$. In the following, we will construct a surface in \Mpthick
  which cannot be decomposed into pants by curves all shorter than
  $\sqrt{g+p}$.
  
  \begin{lemma}\label{Lem:HairyTorus}

     There exists a surface $H \in \Mpthick$ such that $B(H) = \Omega \big(
     \sqrt{g+p} \big)$

  \end{lemma}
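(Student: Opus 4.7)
The plan is to construct $H$ by a ``double hairy torus'' construction, following Buser's original hairy-torus example underlying \thmref{Buser} but modified to land in the thick part via doubling.

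Set $N = \lfloor \sqrt{g+p} \rfloor$. First I would build a finite-area hyperbolic surface $Y$ of genus about $N$, with $N$ cusps (equivalently, $N$ short horocyclic boundary circles of length $\epsilon_M$ after truncation), arranged along a geodesic in $Y$ so that consecutive circles are distance $\Omega(1)$ apart and the full configuration spans a geodesic path of length $\Omega(N)$. Truncate to the compact surface $\bar{Y}$ and double along the $N$ boundary circles to obtain a closed hyperbolic surface $H_0$. The surface $H_0$ has genus $\Theta(N)$, not yet $g+p$. To inflate the genus to the correct value, I would replace each of the $N$ ``gluing'' annular regions by a chain of pants of genus $\Theta(N)$, each pants-chain extending for hyperbolic length $\Omega(N)$ with all intermediate curves of length in $[\epsilon_M, 2\epsilon_M]$. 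The resulting closed surface $H$ has genus $\Theta(N \cdot N) = \Theta(g+p)$; minor local adjustments and insertion of $p$ cusps allow us to achieve exactly genus $g$ with $p$ labeled punctures.

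The thickness $H \in \Mpthick$ follows by construction. Every essential simple closed geodesic either lies inside some thick piece of the construction (where the systole is $\geq \epsilon_M$ by design), or crosses a gluing curve of length $\epsilon_M$, in which case the collar lemma forces length $\geq \epsilon_M$.

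The main step, and the main obstacle, is the lower bound $B(H) = \Omega(N)$. Given a pants decomposition $P$ of $H$ with all curves of length $\leq L$, the goal is to deduce $L = \Omega(N)$. The key ingredients are: (i) $H$ has hyperbolic diameter $\Omega(N)$ because of the linear chain of inserted subsurfaces; (ii) in the $\epsilon_M$-thick part, a pair of pants with all boundary lengths at most $L$ has diameter bounded by a linear function of $L$; (iii) there are $2g-2+p = \Theta(N^2)$ pants total in $P$, but any one chosen geodesic traversal of the chain only passes through $O(\text{diameter}/L)$ pants. However, ingredients (i)--(iii) alone yield only the logarithmic bound that comes from a ball-packing estimate. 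The $\sqrt{g+p}$ improvement comes from exploiting the one-dimensional shape of the chain: a pants decomposition must include ``transverse'' curves cutting each arm at various positions along its length, and a topological counting argument on the dual graph of $P$ restricted to each arm, combined with the fact that each arm has complexity $\Omega(N)$ but diameter $\Omega(N)$, forces at least one such transverse curve to be of length $\Omega(N)$. Making this counting argument precise, following the structure of Buser's original hairy-torus proof, is the crux; the difficulty is ruling out the possibility that a long single curve ``winds around'' the arm cheaply, which requires using the simple closed curve condition and a careful intersection-number comparison.
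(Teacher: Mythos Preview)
Your proposal has a genuine gap at exactly the point you flag as ``the crux'': the lower bound $B(H)=\Omega(N)$. The construction you describe --- a base of genus $\Theta(N)$ with $N$ attached one-dimensional chains of pants, each of genus $\Theta(N)$ and hyperbolic length $\Omega(N)$ --- does not obviously have large Bers constant, and your sketched argument does not establish it. A chain of pants of genus $\Theta(N)$ can always be decomposed by its own short transverse curves of length $O(1)$; inserting such chains therefore does not force any pants curve to be long. More generally, large diameter does not imply large $B$: the line surface $X_g$ from \secref{LineBouquet} has diameter $\Omega(g)$ but $B(X_g)=\ep_M$. Your items (i)--(iii) and the ``one-dimensional shape of the chain'' heuristic point in the wrong direction: one-dimensionality is precisely what allows short pants decompositions. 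The ``winding'' difficulty you mention is not a technicality to be dispatched but the heart of the matter, and nothing in the proposal addresses it.

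The paper's construction is genuinely two-dimensional and the proof mechanism is different from what you attempt. One tiles an $m\times m$ array of identical hyperbolic squares (each with a central geodesic boundary circle), glues opposite sides to get a genus-one surface $T_m$ with $m^2$ boundary components, and doubles along those boundaries to obtain a closed surface $H_m$ of genus $1+m^2$. The key device is a uniformly Lipschitz collapsing map $\pi\colon H_m\to F_m$ onto the flat torus obtained from a Euclidean $2am\times 2am$ square; $\pi$ is surjective on first homology. For any pants decomposition $P$, if every curve of $P$ had homologically trivial image in $F_m$, one could find an essential subsurface of $H_m$ missed by $P$, a contradiction. Hence some $\alpha\in P$ has $[\pi(\alpha)]\neq 0$ in $H_1(F_m)$, so $\ell_{F_m}(\pi(\alpha))\geq 2am$, and the Lipschitz bound gives $\ell_{H_m}(\alpha)=\Omega(m)=\Omega(\sqrt{g})$. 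The punctured case replaces some inner boundary circles by cusps and uses the same projection. This homology-via-Lipschitz-projection argument is the missing idea in your approach; a purely combinatorial diameter/counting argument on the dual graph will not produce it.
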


  Assuming \lemref{HairyTorus}, we derive the lower bound for the height of
  the thick part of moduli space:

  \begin{corollary} \label{Cor:LowerHeight}
     
     \[ \HD_L \big(\PMthick, \PBers \big) \ge \HD_L\big(\Mpthick, \Bersp
     \big) = \Omega \left( \log \left( \frac{g+p}{\ep} \right) \right).\]

  \end{corollary}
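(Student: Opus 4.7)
The inequality $\HD_L(\PMthick,\PBers) \ge \HD_L(\Mpthick,\Bersp)$ is immediate from the fact that the quotient map $\PM \to \Mp$ is $1$-Lipschitz and sends $\PMthick$ onto $\Mpthick$ and $\PBers$ onto $\Bersp$: any $Y \in \Mpthick$ lifts to some $\tilde Y \in \PMthick$, and distances in the cover dominate distances in the quotient, so
\[
  \inf_{X \in \PBers}\max\{\dL(X,\tilde Y),\dL(\tilde Y,X)\} \;\ge\; \inf_{[X] \in \Bersp}\max\{\dL([X],Y),\dL(Y,[X])\},
\]
and taking suprema gives the desired comparison.

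For the main bound, the strategy is to exhibit a single $H' \in \Mpthick$ that is Lipschitz-far from $\Bersp$ in both directions. Starting from the hairy torus $H$ of \lemref{HairyTorus}, I would produce $H'$ by shortening one chosen curve $\gamma$ of $H$ down to length $\ep$ via a Fenchel--Nielsen pinch, while keeping every other length coordinate fixed. The resulting surface still lies in $\Mpthick$ and has systole equal to $\ep$. The essential point that must be verified is that $B(H') = \Omega\big(\sqrt{g+p}\big)$ still holds; this should follow because the obstruction to short pants in \lemref{HairyTorus} is a topological/combinatorial count rather than a delicate geometric bound, and so is preserved under a single local modification.

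Now for any $X \in \Bersp$, a realization of $\dL(X, H')$ by an $L$-Lipschitz homeomorphism $X \to H'$ sends the canonical length-$\ep_M$ pants decomposition of $X$ to a pants decomposition of $H'$ with every curve of length at most $L\ep_M$; hence $L\ep_M \ge B(H')$ and
\[
  \dL(X,H') \;\succ\; \log(g+p).
\]
Conversely, the systole $\gamma$ of $H'$ has length $\ep$, and under any $L$-Lipschitz map $H' \to X$ its image is a simple closed curve on $X$ whose length is at least $\ep_M$ (by the discussion of $\Bersp$ in \secref{WidthHeight}, every simple curve on $X \in \Bersp$ has length at least $\ep_M$). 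This gives $L \ge \ep_M/\ep$ and
\[
  \dL(H',X) \;\succ\; \log(1/\ep).
\]

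Combining via $\max\{A,B\} \ge \tfrac12(A+B)$, for every $X \in \Bersp$,
\[
  \max\{\dL(X,H'),\;\dL(H',X)\} \;\succ\; \log(g+p) + \log(1/\ep) \;=\; \log\!\left(\tfrac{g+p}{\ep}\right).
\]
Taking the infimum over $X \in \Bersp$ and specializing $Y = H'$ in the definition of Hausdorff distance yields the corollary. The main obstacle in executing this plan is the verification that the pinched surface $H'$ retains $B(H') = \Omega\big(\sqrt{g+p}\big)$; the cleanest route is likely to build the short curve directly into the hairy-torus construction so that both properties hold by design.
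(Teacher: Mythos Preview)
Your two core ingredients---the hairy torus to force $\dL(X,\,\cdot\,) \succ \log(g+p)$ via the Bers constant, and a short curve to force $\dL(\,\cdot\,,X) \succ \log(1/\ep)$---are exactly what the paper uses. The difference is packaging: you attempt to combine both features into a single test surface $H'$, and correctly identify that verifying $B(H') = \Omega(\sqrt{g+p})$ after pinching is the nontrivial step. The paper sidesteps this entirely by using \emph{two separate} test surfaces and exploiting that $\HD_L$ is a supremum over $Y \in \Mpthick$. It takes $H$ from \lemref{HairyTorus} unmodified, giving $\dL(X,H) \succ \log\big((g+p)/\ep_M\big)$ for every $X \in \Bersp$; independently it takes any $Y \in \Mpthick$ with a curve of length $\ep$, giving $\dL(Y,X) \ge \log(\ep_M/\ep)$ for every $X \in \Bersp$. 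Since $\HD_L$ dominates the value at $H$ and the value at $Y$, it dominates their maximum, hence half their sum, which is $\tfrac12\log\big((g+p)/\ep\big)$.

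So your plan would work (the pinched $\gamma_{ij}$ is collapsed by the map $\pi$ to $F_m$, so the Lipschitz argument survives), but it is unnecessary labor: once you realize the supremum lets you test with different $Y$'s, the obstacle you flagged simply disappears.
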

  
  \begin{proof}
     
     Let $H$ be the surface of \lemref{HairyTorus}. For any $X \in \Bersp$,
     any Lipschitz map from $X$ to $H$ must take a pants decomposition of
     $X$ to a pants decomposition of $H$. Thus some curve on $X$ must get
     stretched by a factor $\Omega \left( \frac{\sqrt{g+p}}{\ep_M}
     \right)$. This means \[ \dL(X,H) \succ \log \left( \frac{g+p}{\ep_M}
     \right). \] Now consider a surface $Y \in \Mpthick$ which has a curve
     of length $\ep \le \ep_M$. Any Lipschitz map from $Y$ to $X$ must
     stretch this curve by a factor at least $\frac{\ep_M}{\ep}$, so \[
     \dL(Y,X) \ge \log \left(\frac{\ep_M}{\ep}\right).\] It follows then
     \[ 
     \HD_L \big( \Mpthick, \Bersp \big) \succ \frac{1}{2} 
     \left( \log \left( \frac{g+p}{\ep_M} \right) + \log \left(
     \frac{\ep_M}{\ep}
     \right) \right) 
     = \frac{1}{2} \log \left( \frac{g+p}{\ep} \right). \qedhere
     \]
  \end{proof}
  
  The rest of this section is dedicated to constructing $H$ for
  \lemref{HairyTorus}. In the case of genus $g=0$, $p>0$, this has already
  been done by Balacheff and Parlier in \cite{parlier:BPS}. Their hairy
  sphere construction gives rise to a surface $H \in
  \calM_{0,p}^{\,\ep}/\Sym_p$ with $B(H) = \Omega \big( \sqrt p \big)$. For
  higher genus, we will use a variation of Buser's hairy torus \cite[\S
  5.3]{buser:GSC}. We will first explain the construction in the case of
  closed surfaces. Then we will extend the construction to punctured
  surfaces with at least one genus.
  
  Start with a right-angled geodesic pentagon in hyperbolic plane with side
  lengths $a, b, c, d, e$. We set $a=b$, $c=e$ and $d=.25$. Glue four
  copies of such pentagons together to form a $2a \times 2a$ hyperbolic
  square $R$ with an inner geodesic boundary component $\gamma$ of length
  $1$ (see \figref{Pentagons}). 

  \begin{figure}[!htp] 
  \begin{center} 
  \includegraphics{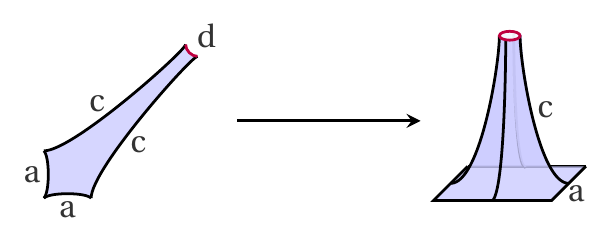}
  \end{center}
  \caption{Gluing pentagons} 
  \label{Fig:Pentagons} 
  \end{figure}

  Given any positive integer $m$, we can paste together $m^2$ copies of $R$
  to obtain a larger square $R_m$ of side lengths $2am$ with $m^2$ inner
  boundary components. We index these boundaries by $\gamma_{ij}$.
  Identifying the opposite sides of $R_m$ yields a hyperbolic surface (a
  hairy torus) $T_m$ of genus $1$ with $m^2$ boundary components. Now take
  two copies of $T_m$ and glue them along the $\gamma_{ij}$'s. The
  resulting closed surface $H_m$ has genus $g=1+m^2$ (see
  \figref{HairyTorus}).
  
  To show $H_m$ satisfies \lemref{HairyTorus}, we define a Lipschitz map
  $\pi: H_m \to F_m$, where $F_m$ is a flat torus obtained from gluing the
  opposite sides of a $2am \times 2am$ Euclidean square. The map $\pi$ is
  defined locally on each hyperbolic square $R$ with an inner boundary
  $\gamma$. 
  
  Let $F$ be a $2a \times 2a$ Euclidean square. Let $\pi_R : R \to F$ be
  any Lipschitz map that takes the sides of $R$ to the sides of $F$ and
  $\gamma$ to the center of $F$. Let $L$ be the Lipschitz constant of
  $\pi_R$. Divide $F_m$ into $m^2$ sub-squares. Let $\pi : H_m \to F_m$ be
  the map such that, on each hyperbolic square $R$ in $H_m$, $\pi$
  restricted to $R$ is mapped to the corresponding sub-square $F$ in $F_m$
  via the map $\pi_R$ (see \figref{HairyTorus}). The Lipschitz constant of
  $\pi$ is at most the Lipschitz constant of $\pi_R$. We have shown the
  following.

  \begin{lemma}\label{Lem:Reducing}

     The map $\pi : H_m \to F_m$ is $L$--Lipschitz for a uniform $L$.

  \end{lemma}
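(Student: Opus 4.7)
The lemma is essentially a soft compactness statement: since $R$ and $F$ are fixed (independent of $m$), anything Lipschitz we build locally gives a uniform Lipschitz constant globally. The plan is to first construct $\pi_R$ explicitly with the correct boundary behavior, then verify that the local pieces glue into a well-defined global map, and finally upgrade ``locally $L$-Lipschitz'' to ``globally $L$-Lipschitz''.

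First I would build $\pi_R \from R \to F$ in a symmetric fashion. Recall that $R$ is a hyperbolic $2a \times 2a$ square made by gluing four right-angled pentagons, with a central geodesic boundary $\gamma$ of length $1$; the target $F$ is a Euclidean $2a \times 2a$ square. Exploiting the symmetry of the pentagon decomposition, it suffices to define $\pi_R$ on a single pentagon $P \subset R$ (one quarter of $R$) and extend by the dihedral symmetries. On $P$, I map the two outer sides to the corresponding half-sides of $F$ by arclength parametrization, map the side corresponding to $\gamma/4$ to the center of $F$, and interpolate across $P$ by any smooth map (e.g.\ in suitable ``radial'' coordinates from the corner opposite the two outer sides). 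Because $P$ is a fixed compact smooth surface with boundary and the target is smooth, standard smoothing gives a smooth, hence Lipschitz, map; let $L$ be its Lipschitz constant. The symmetric extension to $R$ then has the same Lipschitz constant $L$, agrees with its reflection across each of the four outer sides, and sends $\gamma$ to the center of $F$.

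Next I would verify that the globally defined $\pi \from H_m \to F_m$ is well defined. Each of the four outer sides of a copy of $R$ in $H_m$ is identified with a side of a neighboring copy of $R$ (or wraps around via the torus identifications); in $F_m$, the corresponding sides of the corresponding copies of $F$ are identified in the matching pattern. Because $\pi_R$ restricted to any outer side of $R$ is the arclength parametrization onto the corresponding side of $F$, adjacent local definitions agree on their common boundary side, so $\pi$ is continuous and well defined on $H_m$. Similarly, the inner boundaries $\gamma_{ij}$ of the two copies of $T_m$ that are glued together all collapse to (matching) points in $F_m$, so no inconsistency arises there either.

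Finally I would turn the local Lipschitz bound into a global one. The hairy torus $H_m$ is a path-metric space, and every point of $H_m$ has a neighborhood contained in a union of at most finitely many (in fact at most four) copies of $R$ meeting along boundary sides. For any two points $x, y \in H_m$, a shortest path $\omega$ from $x$ to $y$ can be decomposed into finitely many subarcs each lying in a single copy of $R$; $\pi$ restricted to each subarc is $L$-Lipschitz, so $\ell_{F_m}(\pi \circ \omega) \le L \, \ell_{H_m}(\omega) = L \, d_{H_m}(x,y)$, hence $d_{F_m}(\pi(x),\pi(y)) \le L \, d_{H_m}(x,y)$. The constant $L$ depends only on the fixed pentagon $P$ and not on $m$, so it is uniform. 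The only real subtlety is the boundary-matching step, which is why I chose a symmetric $\pi_R$ at the outset; once that symmetry is in place, gluing and the local-to-global Lipschitz passage are routine.
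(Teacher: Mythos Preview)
Your proposal is correct and follows the same approach as the paper: build a single Lipschitz model map $\pi_R \from R \to F$ and then tile. The paper's version is much terser---it simply asserts that some Lipschitz $\pi_R$ exists taking sides to sides and $\gamma$ to the center, and then states that the glued map has Lipschitz constant at most that of $\pi_R$---whereas you supply exactly the details the paper suppresses: the symmetric pentagon construction with arclength boundary parametrization (which is what makes adjacent copies agree on shared edges), the check that the two hairy tori glue compatibly along the $\gamma_{ij}$, and the path-decomposition argument upgrading local to global Lipschitz. None of this is a different route; it is the paper's argument made rigorous.
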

  
  The map $\pi$ has the following property.

  \begin{lemma} \label{Lem:Homology}

     For any pants decomposition $P$ on $H_m$, there exists a curve $\alpha
     \in P$ such that $\pi(\alpha)$ is not trivial in homology. Therefore,
     $H_m$ satisfies \lemref{HairyTorus}.
  
  \end{lemma}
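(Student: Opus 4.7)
The plan is to argue by contradiction via Stokes' theorem, comparing $\int_{H_m}\pi^*(dx\wedge dy)$ with a sum of boundary integrals over the individual pants. Two ingredients are essential. First, $\pi$ has nonzero degree: both copies of $T_m$ in $H_m$ map onto $F_m$ via the local degree-$1$ maps $\pi_R$, so that with the correct orientation convention $\deg\pi = \pm 2$; if the straightforward convention leads to cancellation, precomposing $\pi_R$ on the $T_m^-$ side with a reflection of $F$ restores $\deg\pi=\pm 2$ without affecting the Lipschitz constant. Second, $\pi_1(F_m) = H_1(F_m) = \Z^2$ is abelian, so triviality of $\pi_*[\alpha]$ in $H_1$ is the same as null-homotopy of $\pi\circ\alpha$.

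Assume for contradiction that every $\alpha\in P$ satisfies $\pi_*[\alpha]=0$ in $H_1(F_m;\Z)$. Then each loop $\pi\circ\alpha$ is null-homotopic in $F_m$, so $\alpha$ lifts to a closed loop in the universal cover $\R^2\to F_m$. Cut $H_m$ along $P$ into pants $\Pi_1,\ldots,\Pi_{2m^2}$. Since $\pi_1(\Pi_j)$ is generated by its three boundary loops and all of these now lift to closed loops, each restriction $\pi|_{\Pi_j}$ lifts to a continuous $\tilde\pi_j\from\Pi_j\to\R^2$.

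Apply Stokes' theorem to each $\Pi_j$, using $dx\wedge dy = d(x\,dy)$:
\[
\int_{\Pi_j}\pi^*(dx\wedge dy) \;=\; \oint_{\partial\Pi_j}\tilde\pi_j^*(x\,dy).
\]
Summing, the left-hand sides give $\int_{H_m}\pi^*(dx\wedge dy) = \deg(\pi)\cdot\area(F_m) = \pm 2(2am)^2\neq 0$. For the right-hand sides, each pants curve $\alpha$ appears as a boundary of exactly two pants $\Pi_j,\Pi_k$ with opposite induced orientations, and the two lifts $\tilde\pi_j|_\alpha$ and $\tilde\pi_k|_\alpha$ both cover $\pi\circ\alpha$, hence differ by a deck translation $(u,v)\in\Z^2$. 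The combined contribution of $\alpha$ is then $-u\oint_\alpha\tilde\pi_j^*(dy) = 0$, since $\tilde\pi_j\circ\alpha$ is a closed curve in $\R^2$. Thus the right-hand sides sum to $0$, contradicting the nonzero left-hand sum.

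Hence some $\alpha\in P$ has $\pi_*[\alpha]\neq 0$, which is the first assertion. Then $\pi(\alpha)$ represents a nonzero class in $H_1(F_m)$, so its length in $F_m$ is at least the systole $2am$. Combined with the $L$-Lipschitz bound of \lemref{Reducing}, $\ell_{H_m}(\alpha)\geq 2am/L = \Omega(\sqrt{g})$, and since $P$ was arbitrary, $B(H_m) = \Omega(\sqrt{g})$, completing \lemref{HairyTorus} in the closed case. The main obstacle is the orientation bookkeeping for $\pi$: an inconsistent sign choice would give $\deg\pi=0$, in which case the left-hand sum also vanishes and the Stokes computation provides no contradiction.
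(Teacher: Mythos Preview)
Your approach via Stokes' theorem and degree is genuinely different from the paper's, and the core of the argument---lifting each pair of pants to $\R^2$ and showing that the boundary contributions cancel---is correct and elegant. However, the orientation issue you flag at the end is real and your proposed fix does not work as stated.

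The map $\pi$ as constructed in the paper has degree zero: when two copies of $T_m$ are doubled along their boundaries to form the oriented surface $H_m$, one copy necessarily inherits the reversed orientation, so the local degrees over a generic point of $F_m$ are $+1$ and $-1$. Thus $\int_{H_m}\pi^*(dx\wedge dy)=0$, and your Stokes computation yields $0=0$, no contradiction. Your fix, ``precompose $\pi_R$ on the $T_m^-$ side with a reflection of $F$,'' does not produce a continuous map: if $r$ is applied square-by-square, adjacent squares in $T_m^-$ no longer glue consistently in $F_m$ (the shared edge is sent to two different edges); if instead one uses a global reflection $\rho$ of $F_m$, the map $\rho\circ\pi$ sends $\gamma_{ij}$ to the center $c_{\sigma(i,j)}$ for a nontrivial permutation $\sigma$, so it disagrees with $\pi|_{T_m^+}$ along the gluing circles.

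The gap is repairable, but it takes more than one line. One clean repair: since the paper does not specify how $\gamma_{ij}^+$ is identified with $\gamma_{ij}^-$, choose the gluing so that $\gamma_{ij}^+$ is attached to $\gamma_{\sigma(i,j)}^-$; then $\pi'=\pi$ on $T_m^+$ and $\pi'=\rho\circ\pi$ on $T_m^-$ is continuous with $\deg\pi'=2$, the Lipschitz constant is unchanged, and your argument goes through for this (isometric) model of $H_m$. Alternatively, redefine $\pi$ on $T_m^-$ to factor through a spine of $T_m^-$ (a graph), so that $\pi^*(dx\wedge dy)$ vanishes there and $\deg\pi=1$.

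For comparison, the paper's argument avoids degree entirely: it uses only that $\pi_*$ is surjective on $H_1$, which holds for the original map. Given a curve $\beta\in P$ with $\pi(\beta)$ null-homotopic, it bounds a disk $D\subset F_m$, and the preimage $Z=\pi^{-1}(F_m\setminus D)$ is a subsurface whose first homology still surjects onto $H_1(F_m)$; since a single pair of pants cannot carry such homology, some pants curve must meet $Z$ essentially. This is more robust to the orientation issue, while your argument, once repaired, is arguably cleaner and yields the conclusion in one stroke.
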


  \begin{proof}
    
     For the purpose of this proof, we can consider $\pi$ up to homotopy.
     The map $\pi : H_m \to F_m$ is onto on homology, since $\pi$ maps each
     hairy torus in $H_m$ onto $F_m$. If a curve $\beta \in P$ maps to a
     trivial curve in the homology of $F_m$, then one can find a disk $D$
     in $F_m$ such that $\pi(\beta)$ lies in $D$. We may assume $\partial
     D$ avoids the singular points (the centers of the sub-squares) of
     $F_m$. The complement $F_m \setminus D$ is a one-holed torus. Its
     preimage $Z = \pi^{-1}(F_m \setminus D)$ in $H_m$ is an essential
     subsurface which is not a pair of pants (since $\pi(Z)$ maps onto the
     homology of $F_m$). Since $P$ is a pants decomposition, some curve
     $\alpha \in P$ must intersect $Z$. The image $\pi(\alpha)$ cannot be
     homotoped away from $F_m \setminus D$, thus it is non-trivial in the
     homology of $F_m$. 
     
     For such an $\alpha$, we have $\ell_{F_m} \big( \pi(\alpha) \big) \ge
     2am = 2a \sqrt{g-1}$. Since $\pi$ is $L$--Lipschitz, $\ell_{H_m}(
     \alpha ) = \Omega(m) = \Omega \big( \sqrt g \big)$. This is true for
     any pants decomposition $P$, so $B(H_m) = \Omega \big( \sqrt g \big)$. 
  \end{proof}
  
  \begin{figure}[!htp]
  \begin{center}
  \includegraphics{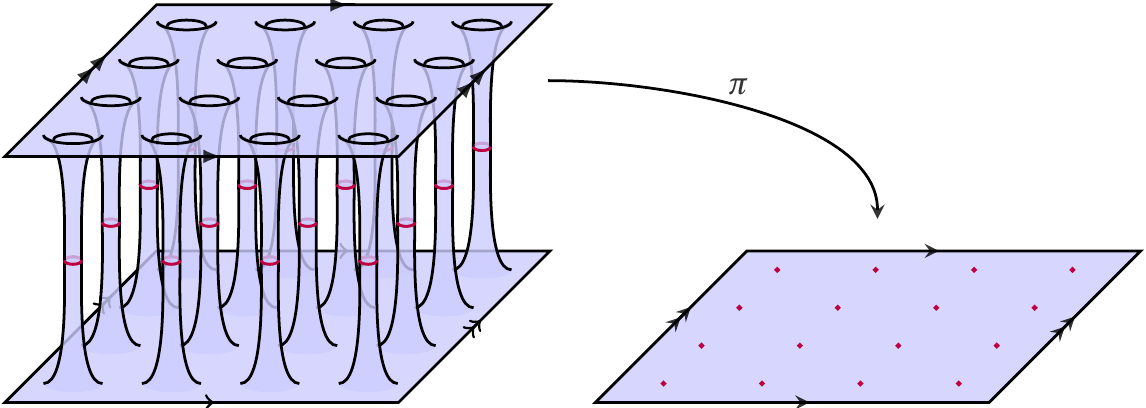}
  \end{center}
  \caption{Hairy torus}
  \label{Fig:HairyTorus}
  \end{figure}

  The double-torus construction yields a closed surface $H_m$ of genus
  $1+m^2$. We now extend the construction to every genus $g$. Given an
  arbitrary $g$, let $m$ be the largest integer such that $1 + m^2 \le g <
  1 + (m+1)^2$. Let $r = g- (1+m^2) < 2m+1$. Let $H_m$ be the closed
  surface of genus $1+m^2$ obtained from gluing two hairy tori together.
  Cut $H_m$ along one of the curves of length $1$ which came from the
  gluing, to obtain a surface of genus $m^2$ with two boundary components.
  For the pair of boundary components $\beta$ and $\beta'$, we insert a
  surface with boundary as follows. Let $W$ be a hyperbolic surface of
  genus $r$ with two geodesic boundary components. We require the boundary
  components of $W$ to have length 1, and all non-peripheral curves on $W$
  to have length at least $\ep$. We glue a copy of $W$ along its boundary
  components to $\beta$ and $\beta'$. The resulting surface $H$ is a closed
  surface of genus $g=m^2+r+1$. In the map previously defined on $H_m \to
  F_m$, the pair $\beta$ and $\beta'$ are mapped to the same point in $F$.
  Thus, this map can be extended to $W$ by a constant map. This defines a
  Lipschitz map $H \to F_m$ with the same constant as the map $H_m \to
  F_m$. The same proof in \lemref{Homology} also works to show $B(H) =
  \Omega(m) = \Omega \big( \sqrt{g} \big)$.
  
  Now we explain the construction in the case of punctured surfaces of
  genus $1$. For any $a$, there exists a hyperbolic quadrilateral with $3$
  right angles and one ideal vertex, such that the two sides opposite of
  the ideal vertex have length $a$. Glue four copies of such quadrilaterals
  together to form a $2a \times 2a$ hyperbolic square $R'$ with a puncture.
  The surface $R'$ can be mapped to $F$ minus the center by a uniformly
  Lipschitz map. Pasting together $m^2$ copies of $R'$ and gluing opposite
  sides yields a surface $Q_m$ of genus $1$ with $p=m^2$ punctures. Gluing
  the maps on each $R'$ equips $Q_m$ with a Lipschitz map $\pi$ to the flat
  torus $F_m$. By the same proof as in \lemref{Homology}, any pants
  decomposition $P$ on $Q_m$ must contain a curve $\alpha$ which does not
  vanish in the homology of $F$. Hence $\ell_F \big( \pi(\alpha) \big) \ge
  2am=2a\sqrt p$, which implies $\ell_{Q_m}(\alpha) = \Omega \big( \sqrt{p}
  \big) = B(Q_m)$. 
  
  For an arbitrary $p$, we do a similar modification as in the case of
  closed surfaces. Let $r=p-m^2<2m+1$. Let $Q_m$ be the genus $1$ surface
  with $m^2$ punctures as constructed above. Now remove an $R'$--square of
  $Q_m$ and replace it with an $R$-square. The result is a surface $Q_m'$
  of genus 1 with $m^2-1$ punctures and one boundary component of length 1.
  One checks that $B(Q_m') \asymp B(Q_m)=\Omega \big( m \big)$. To the
  inner boundary of $R$, attach a sphere with $r+1$ punctures with a
  boundary component of length $1$, such that all essential curves also
  have length at least $\ep$. The result is a surface $Q$ of genus $1$ with
  $p$ punctures. The map $Q \to F_m$ is defined locally in each $R$ and
  $R'$ and extended by the constant map to the attached sphere. The same
  reasoning as before shows $B(Q) = \Omega \big( m \big) = \Omega \big(
  \sqrt p \big)$.   

  Finally, we combine the two constructions. Let $g > 1$ and $p$ be
  arbitrary. Let $H$ be a closed surface of genus $g-1$ obtained from the
  double hairy torus construction. Cut $H$ along one of the curves of
  length $1$ to obtain a surface $H'$ of genus $g-2$ with two boundary
  components. From above, we can construct a surface $Q'$ with genus 1, $p$
  punctures, and one boundary component of length $1$, such that
  $B(Q')=\Omega(\sqrt p)$. Now glue $Q'$ to $H'$ via an intermediate pair
  of pants with three boundary components of lengths 1. The resulting
  surface $X$ has the right topology. The Lipschitz map from $H$ to $F_m$,
  where $m \asymp \sqrt{g}$, sends the boundary components of $H'$ to the
  same point. Therefore, this map extends to all of $X$ by a constant map
  on $Q'$ and the intermediate pants. Likewise, the Lipschitz map from $Q'$
  to $F_{m'}$, $m' \asymp \sqrt{p}$, can be extended to $X$ by a constant
  map on $H'$ and the intermediate pants. In both cases, for any pants
  decomposition $P$ on $X$, the image of $P$ in either $F_m$ or $F_{m'}$ is
  non-trivial in homology. Therefore, $P$ contain a curve $\alpha$ and a
  curve $\alpha'$, such that $\ell_X(\alpha) = \Omega \big( \sqrt g \big)$
  and $\ell_X(\alpha') = \Omega \big( \sqrt p \big)$. It follows then \[
  B(X) = \Omega \big( \sqrt g + \sqrt p \big) = \Omega \big( \sqrt{g+p}
  \big).\] This concludes the proof of \lemref{HairyTorus} and this
  section.

\section{Upper bound for height}

  \label{Sec:UpperHeight}
  
  In this section, we give an upper-bound for the height of \PMthick. For
  any $X \in \PMthick$, we will show that there exists a surface $Y \in
  \PBers$ and a map from $Y$ to $X$ such that, for any curve $\gamma$ on
  $X$, the ratio of $\ext_X(\gamma)$ to $\ext_Y(\gamma)$ is bounded above
  by a polynomial function in $\frac{g+p}{\ep}$.  Then \thmref{Ker} would
  provide an upper bound of $\log \left( \frac{g+p}{\ep} \right)$ for the
  \Teich distance between $X$ and $Y$.
  
  Let $X \in \PMthick$ be given and let $P$ be the shortest pants
  decomposition of $X$. By \thmref{Buser}, there is an upper bound of order
  $(g+p)$ for the lengths of the curve in $P$ (the bound does not depend on
  $\ep$). Let $Y \in \PBers$ be a surface where there is a pants
  decomposition $P'$ in the homeomorphism class of $P$ where the curves
  have lengths $\ep_M$. For $\alpha \in P'$, choose the shortest transverse
  curve $\balpha$ to $\alpha$ which intersects $\alpha$ minimally and is
  disjoint from the other curves in $P'$. All transverse curves have length
  of order $1$. The curves in $P'$ and their duals form a set of curves
  $\mu$ which is usually referred to as a clean marking on $Y$.   

  There is map from $Y$ to $X$ that sends the curves in $P'$ to curves in
  $P$. In fact, the homotopy class of this map is unique up to Dehn twist
  around curves in $P$. We choose such a map $f$ so that, for any $\alpha
  \in P'$, the length of $f(\balpha)$ in $X$ has an upper bound of order
  $M=\max \left\{ g+p, \log \left( \frac{1}{\ep} \right) \right\}$. To see
  that such map exists, let $\beta$ be any curve in $P$. We will find a
  transverse curve $\bbeta$ to $\beta$ of length $O(M)$. Cutting $X$ along
  curves all curves in $P$ except $\beta$ leaves a subsurface containing
  $\beta$ which is either a torus with one boundary component or a sphere
  with four boundary components. The length of each boundary component has
  an upper bound of order $g+p$ and a lower bound of $\ep$. Consider the
  case that $\beta$ is contained in a torus with a boundary component
  $\beta'$. There are a pair of arcs $\omega$ and $\omega'$ in the torus
  that are perpendicular to $\beta$ and $\beta'$. One can use elementary
  hyperbolic geometry to show that the lengths of $\omega$ and $\omega'$
  are $O(M)$. By a surgery using $\omega$ and $\omega'$ and arcs in
  $\beta'$ and $\beta$, we can construct a curve that intersects $\beta$
  exactly once. The length of geodesic representative $\bbeta$ of the curve
  is bounded above by the sum of the lengths of $\beta$, $\beta'$, $\omega$
  and $\omega'$, thus $\ell_X(\bbeta) = O(M)$. A similar construction also
  works in the case that $\beta$ is contained in a sphere with four
  boundary components. 
  
  Hence, we do not distinguish between $P$ and $P'$ and denote them both by
  $P$ and we can consider $\mu$ as a marking on $X$. 
   
  Let $\gamma$ be a curve in $Y$. First we compare the extremal length and
  the hyperbolic length of $\gamma$ in $Y$. Recall the definition of
  extremal length from the background section. If we pick $\rho$ to be the
  hyperbolic metric on $Y$, we have the following inequality just from the
  definition:
  \begin{equation} \label{Eq:eY>lY}
    \ext_Y(\gamma) \geq \frac {\ell_{\rho}(\gamma)^2}{\area(\rho)} =
    \frac{\ell_Y(\gamma)^2}{2\pi\chi(Y)} \succ \frac{\ell_Y(\gamma)^2}{g+p} . 
  \end{equation}

  We now estimate the $\ell_Y(\gamma)$ using its intersection pattern with
  curves in $\mu$: Recall that curves in $\mu$ are either pants curve
  coming from $P$, usually denoted by $\alpha$, or dual curves, denoted by
  $\balpha$. For simplicity, when we write $\alpha \in \mu$, we allow
  $\alpha$ to be both a pants curve or a dual curve. In the case where
  $\alpha$ is a pants curve, $\balpha$ would be the pants curve that
  $\alpha$ is dual to (every curve is the dual of the dual). 
  \begin{lemma}
     
     \begin{equation} \label{Eq:lY>intY}
        \ell_Y(\gamma) \succ 
        \sum_{\alpha \in \mu} i(\gamma,\alpha)\ell_Y(\balpha).
    \end{equation} 

  \end{lemma}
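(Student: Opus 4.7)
The plan is to sandwich both sides between constant multiples of the combinatorial quantity $N = \sum_{\alpha\in\mu} i(\gamma,\alpha)$, from which the lemma follows immediately. Both bounds rely on the bounded geometry of $Y$: every pants curve has length $\ep_M$, every dual curve has length $O(1)$, and cutting $Y$ along $P$ produces pairs of pants of a single isometry type.

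For the upper bound on the right-hand side, whenever $\alpha \in P$ is a pants curve, its dual $\balpha$ has $\ell_Y(\balpha) = O(1)$ by its definition as a shortest transverse curve in a fixed-geometry pair of pants; and whenever $\alpha = \bbeta$ is a dual curve, then $\balpha = \beta \in P$ has length $\ep_M$. Either way $\ell_Y(\balpha) = O(1)$, so
\[
   \sum_{\alpha\in\mu} i(\gamma,\alpha)\,\ell_Y(\balpha) \;\prec\; N.
\]

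For the matching lower bound $\ell_Y(\gamma) \succ N$, I put $\gamma$ in minimal position with $\mu$ and split into pants-curve and dual contributions. Since the curves in $P$ are pairwise disjoint simple geodesics of length $\ep_M$, the collar lemma produces pairwise disjoint collars $C_\alpha$ of some uniform width $w_0 > 0$; each intersection of $\gamma$ with $\alpha$ forces $\gamma$ to traverse $C_\alpha$ and thus contribute length at least $2w_0$, giving $\ell_Y(\gamma) \geq 2w_0 \sum_{\alpha\in P} i(\gamma,\alpha)$. To control $\sum_{\alpha\in P} i(\gamma,\balpha)$, I work in the truncated surface $Y^* = Y \setminus \bigcup_{\alpha\in P} C_\alpha$, whose components are truncated pants of uniformly bounded geometry. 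At most three dual curves intersect any component, and each such $\balpha \cap Y^*$ is a collection of geodesic arcs of total length $O(1)$, so a standard bounded-geometry count shows that any simple geodesic arc of $\gamma \cap Y^*$ of length $L$ meets these dual arcs in at most $O(L)$ points; summing yields that the contribution of $Y^*$ to $\sum_{\alpha\in P} i(\gamma,\balpha)$ is $O(\ell_Y(\gamma))$. Since $\balpha$ enters only $C_\alpha$ (not $C_\beta$ for $\beta\neq\alpha$) and does so a bounded number of times, the remaining intersections inside collars contribute at most $O(\sum_\alpha i(\gamma,\alpha)) = O(\ell_Y(\gamma))$.

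I expect the main obstacle to be the bounded-geometry count of intersections per unit length: one must ensure that short arcs of $\gamma$ near vertices of the decomposition of $Y\setminus\mu$, where two curves of $\mu$ meet, do not produce arbitrarily many intersections per unit length. This is controlled using the lower bound $\ep_M/2$ on the injectivity radius of $Y$: together with the simplicity of $\gamma$, this forces consecutive intersections of $\gamma$ with $\mu$ to be separated along $\gamma$ by a definite distance, yielding the desired estimate.
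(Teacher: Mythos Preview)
Your argument is correct. The paper's proof is a one-line sketch (``well known: each intersection with $\alpha$ forces $\gamma$ to cross an annulus of width comparable to $\ell_Y(\balpha)$''), and your proposal unpacks a closely related argument with more care.

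The organizational difference is this: the paper treats each $\alpha\in\mu$ uniformly via its collar, implicitly using that all curves in $\mu$ have length $\asymp 1$ so all collar widths are $\asymp 1$. You instead first absorb the weights by observing $\ell_Y(\balpha)\asymp 1$, reducing the inequality to $\ell_Y(\gamma)\succ N$, and then separate the contributions of pants curves (handled by the standard disjoint-collar argument) from those of dual curves (handled by a bounded-geometry intersection count in the complementary pieces). Your route is a bit longer but has the virtue of not glossing over the fact that the collars of pants curves and their duals are \emph{not} disjoint, which is the step the paper's sketch leaves to the reader. Either way the underlying content is the same: $Y\in\PBers$ has uniformly bounded geometry, so intersections of a geodesic $\gamma$ with $\mu$ are spaced at a definite rate along $\gamma$.

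One small comment on the last paragraph: the injectivity-radius argument you sketch is in fact the cleanest way to finish, and it subsumes the more elaborate collar-plus-truncated-pants decomposition. In any embedded ball of radius comparable to $\ep_M$, only $O(1)$ arcs of $\mu$ appear (at most one pants curve and its dual pass nearby), and a geodesic segment of $\gamma$ meets each such arc at most once; covering $\gamma$ by $O(\ell_Y(\gamma))$ such balls gives $N=O(\ell_Y(\gamma))$ directly. You could replace most of your Step~2 with this.
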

  
  \begin{proof}
     
     This is well known. Essentially, every time $\gamma$ intersects
     $\alpha$, it has to cross an annulus with thickness comparable to
     the length of $\balpha$. 
  \end{proof}
  
  Note that a curve $\alpha \in \mu$ has length of order $1$ in $Y$.
  Also, 
  \[
     \ell_X(\alpha) \prec M. 
  \]
  Hence, we can control how the sum on the right-hand side of \eqnref{lY>intY} 
  changes when we replace $Y$ with $X$. That is, 
  \begin{equation} \label{Eq:intY>intX}
   \sum_{\alpha \in \mu} i(\gamma,\alpha) \ell_Y(\balpha) \succ \frac{1}{M}
   \sum_{\alpha \in \mu} i(\gamma,\alpha) \ell_X(\balpha).
  \end{equation}

  This sum, in turn, provides an upper bound for the length of $\gamma$
  in $X$. The following formula was proved in \cite[Proposition 3.2]{tao:BC}

  \begin{lemma}
    
    For any curve $\gamma$ on $X$,

     \begin{equation} \label{Eq:intX>lX}
        2 \sum_{\alpha \in \mu} i(\gamma,\alpha) \ell_X(\balpha) \geq
        \ell_X(\gamma). 
     \end{equation}

  \end{lemma}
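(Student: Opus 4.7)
\medskip
\noindent\textbf{Proof proposal.} The plan is to replace $\gamma$ by a homotopic representative running along the curves of the marking $\mu$ and then bound the length of this representative combinatorially. Since $\mu$ is a clean marking, it fills $X$, so every component of $X \setminus \mu$ is an open topological disk whose boundary is a polygon of sub-arcs of curves in $\mu$.

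First, I would put $\gamma$ in minimal position with $\mu$, so that $|\gamma \cap \alpha| = i(\gamma,\alpha)$ for every $\alpha \in \mu$. The intersections cut $\gamma$ into a cyclic sequence of arcs, each lying in a single complementary disk $D$. For each such arc, with endpoints $p, q \in \partial D$, the two boundary arcs of $\partial D$ joining $p$ to $q$ have total length $|\partial D|$, so the shorter has length at most $|\partial D|/2$. Homotoping each arc of $\gamma$ to this shorter boundary arc yields a piecewise-geodesic representative of $\gamma$ that runs along $\mu$, and whose length is an upper bound for $\ell_X(\gamma)$.

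Next, I would reorganize the sum of boundary lengths curve by curve. Whenever $\gamma$ crosses $\alpha \in \mu$ at a point $p$, the two arcs of $\gamma$ adjacent to $p$ get replaced by boundary arcs in the two disks meeting along $\alpha$ at $p$; the portions of these boundary arcs near $p$ are sub-arcs of the dual curve $\balpha \in \mu$. Summing these local contributions across all $i(\gamma,\alpha)$ crossings and across all $\alpha \in \mu$, and using that the sub-arcs of $\balpha$ appearing on the boundaries of complementary disks cover $\balpha$ with controlled multiplicity (at most $2$ per point), one obtains the claimed bound $\ell_X(\gamma) \leq 2\sum_{\alpha \in \mu} i(\gamma,\alpha)\ell_X(\balpha)$; the factor of $2$ is exactly what the ``shorter of two halves'' step above produces.

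The main obstacle is the bookkeeping in the last step: one must verify that each sub-arc of $\balpha$ on the boundary of a complementary disk is charged to an intersection of $\gamma$ with $\alpha$ without over-counting, a check that relies on the local structure of the clean marking inside the 4-holed sphere or 1-holed torus in which $\balpha$ lives. Additionally, the collar geometry near a short pants curve $\alpha$ must be absorbed into $\ell_X(\balpha)$, which is itself large (growing like $\log(1/\ell_X(\alpha))$) because $\balpha$ is forced to traverse the wide Margulis collar of $\alpha$; this is what makes the estimate uniform with no dependence on the injectivity radius of $X$.
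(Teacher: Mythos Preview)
The paper does not prove this lemma; it simply cites \cite[Proposition~3.2]{tao:BC}. Your outline is in the right spirit---pushing $\gamma$ onto the $1$--skeleton of the cell decomposition determined by a filling marking is exactly the standard move---but the bookkeeping in your third paragraph is where the argument breaks.

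First, a local error: when $\gamma$ crosses $\alpha$ at $p$, the boundary arcs of the adjacent complementary polygons near $p$ are sub-arcs of $\alpha$, not of $\balpha$. The vertices of the polygons are points of $\alpha\cap\balpha$, so starting at $p$ the pushed path runs along $\alpha$ until it reaches such a vertex, then turns onto $\balpha$. So the attribution you wrote is backwards.

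Second, and more seriously, the ``shorter of the two halves of $\partial D$'' step only gives a bound of the form $(\text{number of arcs of }\gamma)\times\frac{1}{2}\max_D|\partial D|$, and expanding this produces terms like $i(\gamma,\alpha)\,\ell_X(\alpha)$ in addition to the desired cross terms $i(\gamma,\alpha)\,\ell_X(\balpha)$. Already in a one-holed torus with $\mu=\{\alpha,\balpha\}$ this is visible: the single complementary square has perimeter $2\ell_X(\alpha)+2\ell_X(\balpha)$, so each arc of $\gamma$ is bounded by $\ell_X(\alpha)+\ell_X(\balpha)$, and summing over the $i(\gamma,\alpha)+i(\gamma,\balpha)$ arcs gives the symmetric product, not $2\big(i(\gamma,\alpha)\ell_X(\balpha)+i(\gamma,\balpha)\ell_X(\alpha)\big)$. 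The difference matters precisely when one of $\ell_X(\alpha),\ell_X(\balpha)$ is much larger than the other.

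The argument in the cited reference instead cuts $\gamma$ only along the pants curves $P$, homotopes each arc in its pair of pants to a concatenation of seams and boundary wraps, and then accounts separately: each crossing of a pants curve $\alpha$ uses at most one seam of length $\le\ell_X(\balpha)$, while wraps around $\alpha$ are counted by $i(\gamma,\balpha)$ and each has length $\ell_X(\alpha)$. That separation into ``crossing'' and ``twisting'' contributions is what produces the cross-term structure with the correct constant. Your polygon approach can be made to work, but you must track, for each $\beta\in\mu$, how many maximal runs of the pushed curve lie on $\beta$, and bound that count by $2\,i(\gamma,\bar\beta)$ rather than by the total number of arcs of $\gamma$.
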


  The final step is to compare the hyperbolic length and the extremal
  length of $\gamma$ in $X$. 
  
  \begin{lemma} \label{Lem:lX>eX}
   
     For any $X \in \PMthick$ and any curve $\gamma$, 
     
     \begin{equation} \label{Eq:lX>eX}
       \ell_X(\gamma)^2 \succ \frac {\ep^2}{g+p} \ext_X(\gamma). 
    \end{equation} 

  \end{lemma}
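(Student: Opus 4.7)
The plan is to prove the equivalent upper bound $\ext_X(\gamma) \prec (g+p)\ell_X(\gamma)^2/\ep^2$ via two complementary test constructions, each handling a different range of $\ell_X(\gamma)$, and then rearrange.

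In the regime $\ell_X(\gamma) \succ \sqrt{\ep}$ I would use the (dual) characterization of extremal length as $\inf\{\area(\rho) : \int_c \rho \geq 1 \text{ for every } c \sim \gamma\}$ and test with the rescaled hyperbolic metric $\rho = \rho_h/\ell_X(\gamma)$. This $\rho$ is admissible since $\gamma$ realizes the infimum of hyperbolic length in its homotopy class; it has $\area(\rho) = \area(X)/\ell_X(\gamma)^2 \asymp (g+p)/\ell_X(\gamma)^2$, yielding $\ext_X(\gamma) \prec (g+p)/\ell_X(\gamma)^2$. Combined with $\ell_X(\gamma) \geq \ep$, this already gives the lemma whenever $\ell_X(\gamma)^4 \succ \ep^2$, i.e., in the range $\ell_X(\gamma) \succ \sqrt{\ep}$.

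In the complementary regime $\ep \leq \ell_X(\gamma) \prec \sqrt{\ep}$, $\gamma$ is short and the rescaled hyperbolic test is too wasteful; instead I would apply the standard collar lemma to obtain an embedded annular neighborhood of the geodesic representative of $\gamma$ whose conformal modulus is of order $1/\ell_X(\gamma)$. Monotonicity of extremal length under conformal embedding then yields $\ext_X(\gamma) \prec \ell_X(\gamma)$. Substituting and using $\ep \leq \ell_X(\gamma)$ together with $g+p \geq 1$ gives
\[
\frac{\ep^2}{g+p}\,\ext_X(\gamma) \prec \frac{\ep^2 \ell_X(\gamma)}{g+p} \prec \ep\,\ell_X(\gamma) \prec \ell_X(\gamma)^2,
\]
completing the proof. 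The main obstacle is the short-curve case, where the trivial hyperbolic test loses a full power of $\ell_X(\gamma)$ and one really needs the collar estimate (equivalently, Maskit's inequality relating extremal and hyperbolic length for short simple closed geodesics); once both regimes are covered the remaining algebra is routine bookkeeping.
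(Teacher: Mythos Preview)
Your Case~1 contains a fatal error: the ``dual characterization'' you invoke,
\[
\inf\Big\{\area(\rho)\;:\;\textstyle\int_c \rho \geq 1\ \text{for every } c\sim\gamma\Big\},
\]
is the \emph{modulus} of the curve family, which is the \emph{reciprocal} of the extremal length, not the extremal length itself. Plugging a test metric into this infimum therefore produces a \emph{lower} bound on $\ext_X(\gamma)$, not an upper bound. Concretely, your test metric $\rho_h/\ell_X(\gamma)$ recovers exactly the inequality $\ext_X(\gamma)\succ \ell_X(\gamma)^2/(g+p)$ already recorded in the paper, which is the opposite direction from what you need. The bound you state, $\ext_X(\gamma)\prec (g+p)/\ell_X(\gamma)^2$, is in fact false: for curves with $\ell_X(\gamma)$ large the left side grows while the right side decays.

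You also have the difficulty backwards. The short-curve regime is the easy one, and your Case~2 via the collar lemma is fine there. The genuinely hard case is $\ell_X(\gamma)$ large: the geodesic representative then comes exponentially close to itself, so there is no embedded annular neighborhood of useful width and no off-the-shelf modulus estimate applies. The paper handles this by taking a Delaunay-type triangulation of the truncated surface with edge lengths $\asymp\ep$ and uniformly bounded angles, proving $|\gamma\cap e|\le 2\ell_X(\gamma)/\ep$ for every edge $e$, and then combinatorially straightening and evenly spacing the arcs of $\gamma$ inside each triangle. This yields a homotopic representative $\hat\gamma$ with an embedded product neighborhood of width $v\succ \ep/\ell_X(\gamma)$, after which Minsky's lemma $\ext_X(\gamma)\le \area(X)/(4v^2)$ finishes the proof. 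This spreading-out construction is precisely the missing ingredient in your argument.
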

  
  To prove this, we use the following lemma which essentially follows
  from the definition of the extremal length and is a special case of 
  \cite[Lemma 4.1]{minsky:PR}:
  
  \begin{lemma}
  Given a metric $\sigma$ on $S$ and a representative $\hat \gamma$ of a
  simple closed curve $\gamma$, let $v= v(\sigma, \hat \gamma)$ be a number
  so that the $v$--neighborhood of $\hat \gamma$ is homeomorphic to a
  standard product $\hat \gamma \times [0,1]$. Then 
  \[
  \ext_\sigma(\gamma) \leq \frac {\area(\sigma)}{4v^2}.
  \]
  \end{lemma}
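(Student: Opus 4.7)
The plan is to realize the $v$-neighborhood of $\hat\gamma$ as an embedded annulus and to bound its modulus from below using $\sigma$ itself as a test metric in the dual characterization of modulus. Set $A=\{x\in S:d_\sigma(x,\hat\gamma)\le v\}$; by hypothesis $A$ is an embedded topological annulus whose core is freely homotopic to $\gamma$. For any simple closed curve on a Riemann surface, the extremal length is the reciprocal of the supremum of the moduli of embedded annuli with core in its free homotopy class; in particular
\[
\ext_\sigma(\gamma)\le\frac{1}{\operatorname{mod}(A)},
\]
so it suffices to prove $\operatorname{mod}(A)\ge 4v^2/\area(\sigma)$.

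Next, I would invoke the "height over area" (dual) expression for the modulus of an annulus: writing $\partial_0A$ and $\partial_1A$ for its two boundary components,
\[
\operatorname{mod}(A)=\sup_\rho\frac{d_\rho(\partial_0A,\partial_1A)^2}{\area(A,\rho)},
\]
where $\rho$ runs over conformal metrics on $A$. Taking $\rho$ to be the restriction of $\sigma$ to $A$ gives $\area(A,\sigma)\le\area(\sigma)$, so the problem reduces to the transverse distance bound $d_\sigma(\partial_0A,\partial_1A)\ge 2v$.

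To establish that bound I would exploit the product structure. Under the homeomorphism $A\cong\hat\gamma\times[0,1]$, the curve $\hat\gamma$ appears as $\hat\gamma\times\{t_0\}$ for some $0<t_0<1$, and in particular separates $A$ into two components, one containing each $\partial_iA$. Hence every path in $A$ joining $\partial_0A$ to $\partial_1A$ meets $\hat\gamma$ at some point $p$. Since $d_\sigma(q,\hat\gamma)=v$ for every $q\in\partial_0A\cup\partial_1A$, we have $d_\sigma(q,p)\ge v$ for all such $q$ and all $p\in\hat\gamma$, and the path length is therefore at least $v+v=2v$. Combining with the area estimate yields $\operatorname{mod}(A)\ge 4v^2/\area(\sigma)$, and the lemma follows.

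The only substantive step is the separation argument in the final paragraph, where one has to convert the topological product hypothesis into the quantitative lower bound $2v$ on the $\sigma$-distance between the two boundary circles; once that is in place, the conclusion is a direct application of the modulus-extremal length duality.
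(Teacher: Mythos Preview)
Your argument is correct and is precisely the standard proof the paper is gesturing at: the paper does not actually prove this lemma, but only remarks that it ``essentially follows from the definition of the extremal length'' and cites Minsky's \cite[Lemma~4.1]{minsky:PR}. Your use of the embedded annulus $A$, the duality $\ext_\sigma(\gamma)\le 1/\operatorname{mod}(A)$, and the test metric $\rho=\sigma|_A$ in the height-over-area formulation of $\operatorname{mod}(A)$ is exactly that argument, and the separation step showing $d_\sigma(\partial_0A,\partial_1A)\ge 2v$ is the only point requiring care, which you handle correctly.
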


  \begin{proof}[Proof of \lemref{lX>eX}]

     Let $X$ be a point in $\Mthick$. We would like to apply the above
     lemma. However, when $\ell_X(\gamma)$ is large, the geodesic
     representative of $\gamma$ gets exponentially close to itself, that is
     $v(\gamma, X) \asymp e^{-\ell_X(\gamma)}$. Hence, to obtain a
     polynomial bound, we need to perturb $\gamma$ and push it away from
     itself as much as possible. Our approach is to triangulate the surface
     and spread $\gamma$ locally in each triangle.
          
     Let $\overline{X}$ be the truncated surface obtained from $X$ (see
     \secref{ShadowMap}). We may assume all curves of $P$ are contained in
     $\overline{X}$. Choose a triangulation $T$ on $\overline{X}$ so that
     the length of any edge in $T$ has an upper bound of $\ep/2$ and a
     lower bound of order $\ep$ and so that there is a uniform lower bound
     on angles of every triangle in $T$. Such $T$ can be constructed using
     a Delaunay triangulation on random well-spaced points on
     $\overline{X}$, which has been constructed explicitly in
     \cite{leibon:DT}. The edge lengths of the triangles of $T$ take values
     in the interval $\left[ \frac{\ep}{k}, \frac{\ep}{2} \right]$ for a
     uniform $k$. The triangles in this construction have bounded
     circumradii on the order of $\ep$. This fact provides a uniform lower
     bound for the angles (see \cite{breslin:TT}). 
     
     We may also perturb the triangulation so that $\gamma$ does not pass
     through any vertex of $T$ and is not tangent to any edge of $T$. 

     \begin{claim}

        For any edge $e$ in $T$, \[\left| \gamma \bigcap e \right| \leq
        \frac {2 \ell_X(\gamma)}\ep.\]

     \end{claim}
     
     \begin{proof}[Proof of the claim]

        Let $p,q \in \gamma \bigcap e$ be two intersection points that
        appear consecutively along $\gamma$. Consider the loop formed by
        taking the union of the arc $\omega_\gamma \subset \gamma$
        connecting $p$ to $q$ with the segment $\omega_e \subset e$
        connecting $p$ to $q$. This loop is essential in $X$ and thus must
        have length at least \ep. The length of the arc $\omega_e$ is at
        most $\ep/2$. Thus the length of arc $\omega_\gamma$ is at least
        $\ep/2$. Hence, the intersection number is less than
        $2\ell_X(\gamma)/\ep$.
     \end{proof}

     \begin{figure}[!htp]   
     \begin{center}
     \includegraphics{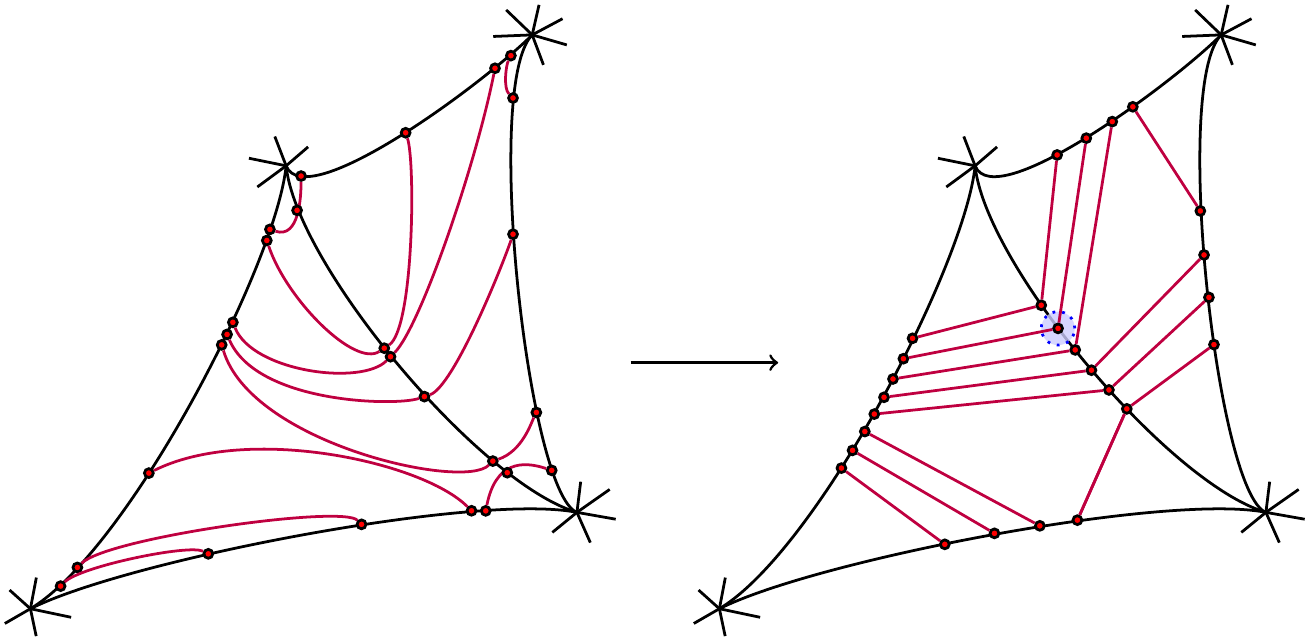}
     \end{center} 
     \caption{Perturbing arcs of $P$ in $\Delta$.} 
     \label{Fig:Perturb} 
     \end{figure}
     
     Let $l = \ell_X(\gamma)$. We refer to \figref{Perturb} in the
     following construction. For a triangle $\Delta \in T$, the restriction
     of $\gamma$ to $\Delta$ is a collection of $O\big(l/\ep\big)$ arcs.
     For every edge $e$ of $\Delta$, let $i_e =\left| \gamma \bigcap e
     \right|$. When $i_e >1$, let $\bar e$ be the middle third of the
     segment $e$. Mark $i_e$ points on $\bar e$ subdividing it into
     $(i_e-1)$ equal segments. When $i_e=1$, $\bar e$ is just the middle
     point of $e$ and it contains one marked point. When $i_e=0$, $\bar e$
     is empty. We replace the restriction of $\gamma$ to $\Delta$ with a
     collection of straight segments that start and end with the chosen
     marked points. Note that the distance between segments is at least of
     order $\ep/l$. We denote the resulting curve by $\hat \gamma$ which is
     an slight perturbation of $\gamma$ and hence is homotopic to it. For
     every point in $\hat \gamma$, there is a neighborhood with radius of
     order $\ep$ that is contained in the union of at most two triangles.
     Thus, there is a neighborhood of $\hat \gamma$, with the thickness of
     order $\ep/l$, that is standard. That is, $$ v=v(\hat \gamma, X) \succ
     \frac \ep l. $$ Taking $\sigma$ to be the hyperbolic metric on $X$
     and using Minsky's Lemma we obtain
     \[ 
        \ext_X(\gamma) \prec \frac{\area(X)}{\ep^2/l^2}.
    \]
    The proof follow from the fact that the area of $X$ is of order
    $(g+p)$.
    \end{proof}
   
    We can now combine these results to obtain the desired upper bound for
    the height of the thick part.
   
    \begin{theorem} \label{Thm:UpperHeight}
       
       \[ \HD_T \big( \PMthick, \PBers \big) = O \left( \log \left(
       \frac{g+p}{\ep} \right) \right).
       \]

    \end{theorem}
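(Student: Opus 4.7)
The plan is to apply Kerckhoff's formula (\thmref{Ker}) with the specific pair $(Y,X)$ and the specific map $f : Y \to X$ already singled out at the start of the section. Since the \Teich metric is symmetric, it suffices to exhibit a uniform constant $C$ such that, for every curve $\gamma$ on $Y$,
\[
   \frac{\ext_X\!\bigl(f(\gamma)\bigr)}{\ext_Y(\gamma)} \,\prec\, \left(\frac{g+p}{\ep}\right)^{C}.
\]
Taking $\log$ and realizing the infimum in \thmref{Ker} with our chosen $f$ would then yield $\dT(Y,X) = O\bigl(\log((g+p)/\ep)\bigr)$, and by symmetry $\dT(X,Y) = O\bigl(\log((g+p)/\ep)\bigr)$, which gives the claimed Hausdorff bound.

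To establish the ratio bound, the plan is to chain the five estimates proved earlier in this section into a single telescoping inequality. Identify $f(\gamma)$ with $\gamma$ via the marking $\mu$ and compute, in order:
\[
   \ext_X(\gamma) \,\prec\, \tfrac{g+p}{\ep^{2}}\,\ell_X(\gamma)^{2}
   \,\prec\, \tfrac{g+p}{\ep^{2}} \Bigl(\sum_{\alpha\in\mu} i(\gamma,\alpha)\,\ell_X(\balpha)\Bigr)^{2}
\]
by \eqnref{lX>eX} and \eqnref{intX>lX}; then swap $X$ for $Y$ in the intersection sum through \eqnref{intY>intX} at the cost of a factor $M^{2}$, apply \eqnref{lY>intY} to bound $\sum_{\alpha} i(\gamma,\alpha)\ell_Y(\balpha) \prec \ell_Y(\gamma)$, and finally use \eqnref{eY>lY} to pass from $\ell_Y(\gamma)^{2}$ to $(g+p)\,\ext_Y(\gamma)$. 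Multiplying the constants together gives
\[
   \frac{\ext_X(\gamma)}{\ext_Y(\gamma)} \,\prec\, \frac{M^{2}(g+p)^{2}}{\ep^{2}},
\]
and since $M = \max\{g+p,\log(1/\ep)\}$ is itself polynomial in $(g+p)/\ep$, the ratio is polynomial in $(g+p)/\ep$, as required.

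There is no serious new obstacle in the proof itself: the substantive work was done in constructing $Y$ and $f$ so that the dual curves $\balpha$ of the marking have $\ell_X(\balpha) = O(M)$, and in establishing the five inequalities above (notably \lemref{lX>eX}, which required the Delaunay triangulation and local perturbation argument). The only point requiring care is the use of Kerckhoff's formula: one must ensure that the specific map $f : Y \to X$ (not merely an abstract quasi-conformal map) is used to identify curves so that the expressions $\ext_X(f(\gamma))$ and $\ext_Y(\gamma)$ refer to the same underlying marked data. Combining \thmref{UpperHeight} with \corref{UpperWidth} and \corref{LowerHeight} then yields \thmref{IntroHeight}, and together with the lower bounds from \secref{Examples} completes the proof of \thmref{IntroDiamModuli}.
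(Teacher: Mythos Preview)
Your proposal is correct and follows essentially the same approach as the paper: both chain the five inequalities \eqref{Eq:eY>lY}--\eqref{Eq:lX>eX} to obtain $\ext_X(\gamma)/\ext_Y(\gamma) \prec M^2(g+p)^2/\ep^2$, then absorb $M$ into a polynomial in $(g+p)/\ep$ and apply Kerckhoff's formula. The paper makes the absorption explicit by writing $M \prec (g+p)/\ep$ to arrive at the clean bound $((g+p)/\ep)^4$, but this is a cosmetic difference only.
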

   
    \begin{proof}
    
    We will show that for any $X \in \PMthick$ there is $Y \in \PBers$ so
    that
       \[
       \dT(Y,X) = O \left( \log \left( \frac {g+p}\ep \right) \right).
       \]
    Let $\gamma$ be a curve on $X$. We can can combine Equations
    \eqref{Eq:eY>lY},   \eqref{Eq:lY>intY},   \eqref{Eq:intY>intX} 
     \eqref{Eq:intX>lX},   \eqref{Eq:lX>eX}
     ( multiply \eqref{Eq:eY>lY},  \eqref{Eq:lX>eX} and the square of   
     \eqref{Eq:lY>intY},   \eqref{Eq:intY>intX} and \eqref{Eq:intX>lX})
    to obtain:
    \[
     \ext_Y(\gamma) \succ \left( \frac \ep{g+p} \right)^2 \frac{1}{M^2}
     \ext_X(\gamma),
    \] 
    where $M = \max \left\{g+p, \log \left( \frac{1}{\ep} \right\} \right)$. We overestimate 
    an upper bound for $M$ for uniformity: $M \prec \frac{g+p}{\ep}$. Hence, after 
    reorganizing, we have 
    \[
      \frac{\ext_X(\gamma)}{\ext_Y(\gamma)} \prec \left( \frac {g+p}\ep
      \right)^2 M^2 \prec \left( \frac {g+p}\ep
      \right)^4. 
    \]
    Since this is true for
    every curve $\gamma$, applying \thmref{Ker}, we obtain the desired
    upper bound
    \[
       \dT(X,Y) \le \frac 12 \sup_\gamma
       \log \frac{\ext_X(\gamma)}{\ext_Y(\gamma)} \prec 2 \log \left(\frac
       {g+p}\ep \right). \qedhere
    \]
    \end{proof}
   
  Combining \thmref{UpperHeight} with \corref{LowerHeight}, we deduce that

  \begin{corollary} \label{Cor:Height}
     
     The \Teich and the Lipschitz height of \Mpthick and \PMthick are all of
     order $\log \left( \frac{g+p}{\ep} \right)$.

  \end{corollary}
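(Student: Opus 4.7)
The plan is to assemble this corollary from the two principal estimates of the paper together with elementary comparison principles. \thmref{UpperHeight} provides the \Teich upper bound
$\HD_T(\PMthick,\PBers) = O(\log((g+p)/\ep))$,
and \corref{LowerHeight} provides the Lipschitz lower bound
$\HD_L(\Mpthick,\Bersp) = \Omega(\log((g+p)/\ep))$.
What remains is to transfer each of these across the two metrics on moduli space and between $\PM$ and its quotient $\Mp$.

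For the metric comparison I would apply \eqnref{TwoMetrics}, namely $\tfrac{1}{2} d_L(X,Y) \le d_T(X,Y)$. Since $d_T$ is symmetric, this inequality bounds both $d_L(X,Y)$ and $d_L(Y,X)$ by $2\, d_T(X,Y)$, so $\max\{d_L(X,Y),d_L(Y,X)\} \le 2\, d_T(X,Y)$, and therefore $\HD_L \le 2\, \HD_T$ on either moduli space; conversely $\HD_T \ge \tfrac{1}{2}\, \HD_L$. Hence a \Teich upper bound transfers to a Lipschitz upper bound, and a Lipschitz lower bound transfers to a \Teich lower bound, each preserving the order $\log((g+p)/\ep)$.

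For the cover comparison, the projection $\pi \colon \PM \to \Mp$ is $1$-Lipschitz in both metrics and maps $\PMthick$ onto $\Mpthick$ and $\PBers$ onto $\Bersp$, since both subsets are $\Sym_p$-invariant. Given $Y \in \PMthick$ and a nearly optimal $X \in \PBers$, the image $\pi(X) \in \Bersp$ lies no farther from $\pi(Y)$ than $X$ does from $Y$, yielding $\HD(\Mpthick,\Bersp) \le \HD(\PMthick,\PBers)$ in either metric. The reverse inequality is the observation recorded at the start of \secref{Examples}, that the height of the cover dominates the height of the quotient. Combining these four transfers gives the order $\log((g+p)/\ep)$ for the height of both $\PMthick$ and $\Mpthick$ in both metrics. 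No serious obstacle arises; the corollary is essentially a packaging step, and the only care needed is to orient each inequality correctly.
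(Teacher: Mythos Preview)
Your argument is correct and matches the paper's approach: the paper simply states that the corollary follows by ``combining \thmref{UpperHeight} with \corref{LowerHeight}'', and you have spelled out the routine transfers via \eqnref{TwoMetrics} and the covering map that make this work. One small slip: the sentence about ``the reverse inequality'' is confused --- the observation at the start of \secref{Examples} is that the height of $\PM$ is bounded \emph{below} by that of $\Mp$, which is the \emph{same} inequality $\HD(\Mpthick,\Bersp) \le \HD(\PMthick,\PBers)$ you just derived, not its reverse; fortunately only this one direction is needed (upper bounds descend to the quotient, lower bounds lift to the cover), so the redundancy is harmless.
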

  
  Finally, combining \corref{Height} with \corref{DiamWidth} and the
  triangle inequality, we obtain

  \begin{corollary}
     
     The \Teich and the Lipschitz diameter of \Mpthick and \PMthick are all
     of order \\ $\log \left( \frac{g+p}{\ep} \right)$.

  \end{corollary}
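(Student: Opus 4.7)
The plan is to deduce the corollary from the two preceding results by a straightforward triangle inequality argument, since the diameter of $\PMthick$ (resp.\ $\Mpthick$) is squeezed between the height and the sum of the width and twice the height. Because the width is $O\big(\log(g+p)\big)$ and the height is of order $\log\!\big(\tfrac{g+p}{\epsilon}\big)$, the latter dominates and determines the order of the diameter. The argument applies identically in both the \Teich and Lipschitz metrics, and for both \PMthick and \Mpthick; for concreteness I write it for the \Teich diameter of \PMthick.

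For the upper bound, I would fix arbitrary $X,Y\in\PMthick$ and invoke \corref{Height} to choose $X',Y'\in\PBers$ with $\dT(X,X')$ and $\dT(Y,Y')$ both bounded by a constant multiple of $\log\!\big(\tfrac{g+p}{\epsilon}\big)$. Then \corref{DiamWidth} gives $\dT(X',Y')=O\big(\log(g+p)\big)$, and the triangle inequality yields
\[
  \dT(X,Y)\;\le\;\dT(X,X')+\dT(X',Y')+\dT(Y',Y)\;=\;O\!\left(\log\!\tfrac{g+p}{\epsilon}\right),
\]
since $\log(g+p)\le\log\!\big(\tfrac{g+p}{\epsilon}\big)$ whenever $\epsilon\le 1$ (and in particular for $\epsilon\le\epsilon_M$). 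For the Lipschitz case one uses the same scheme with $\dL$ in place of $\dT$, together with the asymmetry handled exactly as in the definition of $\HD_L$.

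For the lower bound, I would observe that the Hausdorff distance from \PMthick to \PBers is tautologically dominated by the diameter of \PMthick: given any $Y\in\PMthick$ realising the supremum in the definition of the height, the infimum over $X\in\PBers\subset\PMthick$ of $\max\{d(X,Y),d(Y,X)\}$ is bounded above by the diameter of \PMthick. Hence
\[
  \diam\big(\PMthick\big)\;\ge\;\HD\big(\PMthick,\PBers\big)\;=\;\Omega\!\left(\log\!\tfrac{g+p}{\epsilon}\right)
\]
by \corref{Height}. The same inequality transfers to \Mpthick since the quotient by $\Sym_p$ only decreases distances, and the height lower bound of \corref{LowerHeight} is already stated for \Mpthick.

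Nothing in this step is technically difficult — the real work is already in \corref{Height} and \corref{DiamWidth}. The only small point to mention is that the use of \eqnref{TwoMetrics} simultaneously establishes the claim for both metrics: once the upper bound is proved in the \Teich metric and the lower bound is proved in the Lipschitz metric (as in \thmref{IntroDiamModuli}), the inequality $\tfrac12\dL\le\dT$ propagates each bound to the other metric up to a uniform multiplicative constant, which is absorbed into the asymptotic notation.
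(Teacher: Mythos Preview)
Your proposal is correct and follows essentially the same route as the paper: the lower bound comes from observing that the height (Hausdorff distance to \PBers) is dominated by the diameter, and the upper bound is the triangle inequality through two points $X',Y'\in\PBers$ supplied by the height estimate, together with the width bound on $\dT(X',Y')$. The paper organizes the metric transfer exactly as you do, proving the lower bound in the Lipschitz metric (for \Mpthick) and the upper bound in the \Teich metric (for \PMthick), then letting \eqnref{TwoMetrics} carry each to the other.
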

  
  \begin{proof}
    
    Note that the lower bound for the Lipschitz height of \Mpthick is also
    a lower bound for the diameter of \Mpthick. Thus, $\diam_L(\Mpthick) =
    \Omega \left( \log \left( \frac{g+p}{\ep} \right) \right)$. What
    remains to show is the upper bound for the \Teich diameter of \PMthick.

    Let $X, Y$ be any two points in $\PMthick$. By \thmref{UpperHeight},
    there exist $X'$ and $Y'$ in $\PBers$, such that \[ \dT(X,X') \prec
    \log \left( \frac{g+p}{\ep} \right) \qquad \dT(Y,Y') \prec \log \left(
    \frac{g+p}{\ep} \right). \] By \corref{DiamWidth}, we have
    \[ \dT(X',Y') \prec \log \left( \frac{g+p}{\ep} \right). \] Thus, by
    the triangle inequality, (note that \Teich metric is symmetric), \[
    \dT(X,Y) \le \dT(X,X') + \dT(X',Y') + \dT(Y',Y) \prec 3 \log \left(
    \frac{g+p}{\ep} \right). \qedhere\]

  \end{proof}

\appendix

\section{The metric of Whitehead moves on the space of graphs}
  
  \label{Sec:Appendix}

  In this section, we compute the asymptotic diameter of \PGraph and
  \Graphp in the metric of Whitehead moves. Recall that this is equivalent
  to computing the asymptotic diameter of the moduli space of pants
  decompositions on \sp and $\sp/\Sym_p$ in the metric of elementary moves.
  The main results and proofs are extrapolated from known results. Our
  purpose for writing this section is to unify what is known and to put it
  into our context. This section can be read independently from the rest of
  the paper. 
  
  Let $d_W$ represent the metric of Whitehead moves: two graphs have
  distance one if they differ by one Whitehead move. The main results we
  would like to present are as follows. Note that, as a matter of
  convention, we have $x \log(x) =0$, when $x=0$.

  \begin{theorem}[Labeled punctures] \label{Thm:WHLabeled}
     
     \[ \diam_W \big( \PGraph \big) \asymp (g+p) \log(g+p).\]
     
  \end{theorem}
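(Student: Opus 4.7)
The plan is to prove the matching upper and lower bounds separately, both of order $(g+p)\log(g+p)$.

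For the upper bound, I would bootstrap from \corref{UpperGraph}, which already gives $\diam_S(\PGraph) = O(\log(g+p))$ in the simultaneous-move metric. Since every simultaneous Whitehead move is by construction a composition of Whitehead moves on pairwise disjoint interior edges, and any $\Gamma \in \PGraph$ has only $3g-3+p = O(g+p)$ interior edges, each simultaneous move is realized by at most $O(g+p)$ single Whitehead moves. Multiplying the two estimates yields $\diam_W(\PGraph) = O((g+p)\log(g+p))$.

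For the lower bound I would use a cardinality/volume argument. The first ingredient is that $|\PGraph|$ grows at least factorially in $g+p$: one can embed the set of labeled binary trees on $p+2g$ leaves (with $p$ of them carrying the distinguished labels $0,\dots,p-1$ and the remaining $2g$ paired up into $g$ loops) into $\PGraph$, giving $|\PGraph| \ge (2(p+2g)-3)!!/O(2^g g!)$ and hence, by Stirling, $\log|\PGraph| = \Omega((g+p)\log(g+p))$. The second ingredient is an upper bound $|B_W(r)| \le C^r$ on the cardinality of a ball of radius $r$ in the Whitehead metric, where $C$ is an \emph{absolute} constant, independent of $g$ and $p$; this is provided by \cite{thurston:RDT}. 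Comparing the two estimates gives $C^{\diam_W(\PGraph)} \ge |\PGraph|$, hence $\diam_W(\PGraph) \ge \log|\PGraph|/\log C = \Omega((g+p)\log(g+p))$.

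The principal obstacle is the uniform ball-growth estimate. The naive count available from first principles is only $|B_W(r)| \le (K(g+p))^r$, since a Whitehead move is specified by a choice of interior edge plus one of two reopenings, and there are $O(g+p)$ interior edges; pairing this naive bound with the factorial count of $|\PGraph|$ produces only the weaker lower bound $\Omega(g+p)$ on the diameter, losing the essential $\log(g+p)$ factor. To recover the sharp bound one must appeal to the growth-rate theorem of \cite{thurston:RDT} on locally supported elementary moves, and the critical point will be verifying that Whitehead moves on graphs in $\PGraph$ fit that framework so that the base of the exponential growth is independent of $g$ and $p$.
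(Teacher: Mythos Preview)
Your proposal is correct and follows essentially the same route as the paper: the upper bound via $\diam_S$ times the $O(g+p)$ cost of unpacking a simultaneous move, and the lower bound via a cardinality count against the uniform ball-growth bound from \cite{thurston:RDT}. One small imprecision: the bound from \cite{thurston:RDT} is $c^{g+p+r}$ rather than $C^r$ (and the paper obtains the cardinality from \cite{boll:NRG} rather than your direct tree count), but after taking logarithms the extra $g+p$ in the exponent is lower order than $(g+p)\log(g+p)$, so your conclusion stands unchanged.
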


  \begin{theorem}[Unlabeled punctures] \label{Thm:WHUnlabeled}
     
    \[ \diam_W \big( \Graphp \big) \asymp g \log(g+p)+(g+p).\]
     
  \end{theorem}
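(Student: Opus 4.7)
The strategy is to prove the upper and lower bounds of $\diam_W(\Graphp) \asymp g\log(g+p) + (g+p)$ separately, drawing on the results of Sleator-Tarjan-Thurston \cite{thurston:RDT} and Bollob\'as \cite{boll:NRG} cited in the introduction.

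\emph{Upper bound.} I would give a two-phase algorithm bringing any $\Gamma \in \Graphp$ to a canonical caterpillar form. In Phase 1, use the linear-in-$n$ bound on rotation distance between binary trees from \cite{thurston:RDT} applied to the underlying binary tree of $\Gamma$ (obtained by cutting along $g$ edges in the complement of a spanning tree), transforming the shape into a canonical caterpillar with $2g+p-1$ interior edges at a cost of $O(g+p)$ Whitehead moves. In Phase 2, route the $g$ loop-pairings to their canonical positions along the spine; since the punctures are unlabeled, there is nothing to sort for them. Routing is achieved by temporarily reshaping the caterpillar into a balanced binary tree (cost $O(g+p)$), performing a merge-sort-style placement of the $g$ loops through the tree structure (cost $O(g\log(g+p))$, using a non-simultaneous analogue of the algorithm of \secref{Sorting}), and reshaping back to the caterpillar (cost $O(g+p)$). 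This yields the total upper bound of $O(g\log(g+p) + (g+p))$.

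\emph{Lower bound.} Establish $\Omega(g+p)$ and $\Omega(g\log(g+p))$ independently. The $\Omega(g+p)$ bound follows from the hyperbolic-volume lower bound on rotation distance of \cite{thurston:RDT}: a balanced binary tree and a caterpillar with $n$ leaves are at rotation distance $\Omega(n)$, and this descends to $\Graphp$ via the shape-forgetting map from graphs to their underlying unlabeled binary trees. The $\Omega(g\log(g+p))$ bound uses Bollob\'as's probabilistic analysis \cite{boll:NRG}: a ``random'' trivalent graph in $\Graphp$ is an expander in the sense of \defref{Expander}, whereas any caterpillar (or any graph obtained from a caterpillar by $o(g\log(g+p))$ Whitehead moves) has bottlenecks and fails to be an expander. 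I would define a potential function, such as a normalized logarithm of the Cheeger constant of the underlying graph, whose value changes by at most a bounded additive amount per Whitehead move and which differs by $\Omega(g\log(g+p))$ between an expander graph and any caterpillar. A telescoping argument then forces the Whitehead distance to be $\Omega(g\log(g+p))$.

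\emph{Main obstacle.} The sharp $\Omega(g\log(g+p))$ lower bound is the delicate part. A naive ball-volume count only gives $r \ge \log|\Graphp| / \log O(g+p) = \Omega(g + (g+p)/\log(g+p))$, which is too weak by a logarithmic factor in the regime $p = O(g)$. The required refinement demands a Bollob\'as-style potential function that is Lipschitz under Whitehead moves and whose gap between a generic expander graph and a caterpillar grows like $g\log(g+p)$; constructing and verifying such a potential, and in particular isolating the $g$-factor (rather than a $(g+p)$-factor which would match the labeled case) is the main technical burden.
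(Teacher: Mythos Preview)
Your upper-bound sketch is plausible and not far in spirit from the paper's (the paper uses an induction on $g$, peeling off one loop at a time after first making the spanning tree have diameter $O(\log(g+p))$; your two-phase algorithm is a reasonable alternative). The $\Omega(g+p)$ lower bound is also fine; the paper uses the graph diameter as a $1$--Lipschitz potential rather than the hyperbolic-volume argument, but either works.

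The genuine gap is in your $\Omega(g\log(g+p))$ lower bound. You explicitly discard the ball-volume argument as ``too weak by a logarithmic factor,'' but this is because you are using the naive estimate $|B_r| \le O(g+p)^r$ (each graph has $O(g+p)$ neighbors). The key content of \cite[Theorem~2.3]{thurston:RDT}, which you cite elsewhere, is precisely that this is not the right growth rate: for locally supported moves on shapes of complexity $n$, one has $|B_r| \le c^{n+r}$ for a \emph{universal} constant $c$, independent of $n$. With this bound and Bollob\'as's count $\log|\Graphp| \asymp g\log(g+p)$, the inequality $c^{(g+p)+r} \ge |\Graphp|$ immediately gives $r \succ g\log(g+p) - (g+p)$, which together with your $\Omega(g+p)$ bound yields $r \succ g\log(g+p) + (g+p)$. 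This is exactly what the paper does, and no further work is needed.

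By contrast, your proposed Cheeger-constant potential does not appear to do the job: the Cheeger constant of a trivalent graph lies in a bounded interval, so even its logarithm varies only by $O(\log(g+p))$ between a caterpillar and an expander, not by $g\log(g+p)$. You would need a potential whose total variation scales with $g$, and you have not identified one. The obstacle you flag as ``the main technical burden'' is in fact an artifact of overlooking the sharp ball-growth estimate already available in \cite{thurston:RDT}.
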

  
  Our arguments for the two theorems are based on the work of
  \cite{boll:NRG} and \cite{thurston:RDT}. In the case $p=0$, we also refer
  to \cite{cavendish:WH} for an alternate proof of $\diam_W \big( \Graph
  \big) \asymp g \log(g)$.
  
  We first argue for the upper bounds. The upper-bound in \thmref{WHLabeled}
  follows easily from \thmref{IntroDiamGraph}. Namely, since a simultaneous 
  Whitehead move is a composition of at most $g+p$ Whitehead moves, we have
  \[ \diam_W \big( \PGraph \big) \prec (g+p)\log(g+p). \] Similarly, if $g
  \geq p$, then the same argument also shows \[ \diam_W \big( \Graphp)
  \prec (g+p) \log(g+p) \prec g \log(g+p).\] This argument fails for
  $p>g$, so we present another one that works in general. 

  When $g=0$ and $p \ge 2$, then $\text{Graph}(0,p)/\Sym_p$ is the space of 
  unlabeled (unrooted) trees with $p$ ends. This case has been dealt with in 
  \cite{thurston:RDT} where it is shown that 
  \begin{equation} \label{Eq:Flip}
  \diam_W(\text{Graph}(0,p)/\Sym_p) \asymp p.
  \end{equation}
  (In fact, their estimate is very explicit with only a small additive error.) 

  Now suppose $g \ge 1$. We use induction on $g$ and assume
  \thmref{WHUnlabeled} for all smaller values of $g$. Let $\Gamma_0 \in \Graphp$
  be a graph which has a loop of length $1$ (an edge $e$ starting and ending
  at the same vertex). Let $\Gamma_1 \in \text{Graph}(g-1, p+1)/\Sym_{p+1}$
  be the graph obtained from $\Gamma_0$ after removing $e$. 
  Let $\Gamma$ be any other graph in $\Graphp$.  It is
  sufficient to show that $\Gamma$ is distance $O\big(\log(g+p)\big)$ from a 
  graph $\Gamma'$ with a loop $e'$ of length one. This is because
  $\Gamma' \setminus \{e'\}$ can be transformed to $\Gamma_1$ in
  \[
  O\Big( (g-1) \log\big( (g-1) + (p+1)\big) + \big((g-1) + (p+1)\big). 
  \Big)
  \]
  many steps using induction and the theorem follows.
  
  Let $T$ be a spanning subtree of $\Gamma$. Then $T$ has
  $(g+p)$ ends. Using \eqnref{Flip}, we can use Whitehead moves with support 
  in $T$ to transform  $\Gamma$ to a graph with a spanning tree of diameter 
  $\log(g+p)$. We denote the new graph by $\Gamma$ as well. 
  Then $\Gamma$ has a loop of length of order $\log(g+p)$. 
  That is, for $n \prec \log(g+p)$, there is a linear map $\phi \colon [0,n] \to \Gamma$ 
  such that the image is a non-trivial loop, $\phi(0) = \phi(n)$, and $\phi \big( [i-1,i] \big)$
   is an edge of $\Gamma$.  One can always apply a Whitehead move at an edge
  on the loop to shorten the length of the loop by at least one. Hence, after
  $O\big(\log(g+p)\big)$--many moves, there is a loop of length one.
  This proves the upper bound. 

  The lower bounds for both theorems are obtained by a counting argument.
  We first need an asymptotic formula for the cardinalities of $\PGraph$
  and $\Graphp$. For the following, we introduce the notation $A \sim B$ to
  mean $A \asymp c^{g+p} B$, where $c$ is an uniform constant. Since we
  will be applying the logarithm later, exponential factors can be ignored.
  
  By the work of \cite{boll:NRG}, we have \[ \big| \PGraph \big|
  \sim \frac{(6g+2p)!}{g!(2g+p)!}\] 
  Up to exponential factors, this simplifies to 
  \begin{equation} \label{Eq:CardLabeled}
    \big| \PGraph \big| \sim \frac{(g+p)^{6g+2p}}{g^g(g+p)^{2g+p}} \sim
  (g+p)^{g+p}. 
  \end{equation}
  Similarly, we have 
  \begin{equation} \label{Eq:CardUnlabeled}
    \big| \Graphp \big| \sim \frac{(6g+2p)!}{g!p!(2g+p)!} \sim
  \frac{(g+p)^{4g+p}}{g^g p^p}.
  \end{equation}

  To finish the proof, we need the result in \cite{thurston:RDT}, which
  gives an upper-bound of the form $c^{g+p+r}$ for the cardinality of a
  ball of radius $r$ in \PGraph and in \Graphp, where $c$ is some fixed
  constant (See \cite[Theorem 2.3]{thurston:RDT}). (In fact, their theorem
  is much more general and applies to any space of shapes when shapes are
  allowed to evolve through locally supported elementary moves, such as 
  Whitehead moves.)
  
  Let $r$ be the minimal number such that the ball of radius $r$ contains
  the whole space \PGraph. By \eqnref{CardLabeled}, we have
  \[ 
    c^{g+p+r} \succ  (g+p)^{g+p}.
  \]
  Taking the logarithm of both sides, we obtain 
   \[
   g+p+r \succ  (g+p) \log(g+p) 
   \quad \Longrightarrow \quad 
    \diam_W \big( \PGraph \big) \geq r \succ (g+p) \log(g+p).
  \]
  This completes the proof of \thmref{WHLabeled}.
  
  For unlabeled punctures, we prove two lower bounds. Since our errors are
  multiplicative, their sum is also a lower bound. 
  
  Let $\Gamma, \Gamma' \in \Graphp$ be respectively
  of diameters of order $(g+p)$ and $\log(g+p)$. 
  Consider a sequence $\Gamma_1 \ldots \Gamma_n$ of Whitehead moves
  taking $\Gamma$ to $\Gamma'$. Since 
  \[
  \Big| \diam(\Gamma_i) - \diam(\Gamma_{i+1}) \Big| \leq 1
  \]
   we must have
  \begin{equation} \label{Eq:p-Lower}
  \diam_W \big( \Graphp \big) \succ (g+p) - \log(g+p) \succ (g+p). 
  \end{equation} 
  That is the first lower bound. 
  
  When $g \ge p$,
  \eqnref{CardUnlabeled} reduces to 
  \begin{equation}
    \label{Eq:CardUnlabeled1}
    \big| \Graphp \big| \sim g^{g} \left( \frac{g}{p} \right)^p.
  \end{equation}
  On the other hand, when $p>g$, we have 
  \begin{equation}
    \label{Eq:CardUnlabeled2}
    \big| \Graphp \big| \sim p^{g} \left( \frac{p}{g} \right)^g.
  \end{equation}
  Let $r$ be the minimal number such that the ball of radius $r$ contains
  \Graphp. When $g \ge p$, \eqnref{CardUnlabeled1} implies \[
     r \succ g \log(g) + p \log \frac{g}{p} \succ g \log(g+p). 
  \]
  Similarly, when $p>g$, \eqnref{CardUnlabeled2} implies
  \[
    r \succ g \log(p) + g \log \frac{p}{g} \succ g \log(g+p). 
  \]
  Using the above and \eqnref{p-Lower} we obtain:
  \[ 
    \diam_W \big( \Graphp \big) \succ g\log(g+p)+ (g+ p).
  \] 
  This finishes the proof of \thmref{WHUnlabeled}.


  \bibliographystyle{amsalpha}
  \bibliography{main} 

  \end{document}